
\documentclass[final,12pt]{elsarticle}

\makeatletter
\def\ps@pprintTitle{%
 \let\@oddhead\@empty
 \let\@evenhead\@empty
 \def\@oddfoot{}%
 \let\@evenfoot\@oddfoot}
\makeatother

\usepackage{algorithmicx,algorithm, algpseudocode, refcount}
\usepackage{comment}
\usepackage{tikz}
\usepackage{rotating}
\usetikzlibrary{patterns}
\usetikzlibrary{arrows.meta}
\usetikzlibrary{positioning}
\usetikzlibrary{calc}
\usetikzlibrary{fadings}
\usetikzlibrary{patterns}
\usetikzlibrary{shadows.blur}
\usetikzlibrary{shapes}
\usepackage{mathdots}
\usepackage{yhmath}
\usepackage{cancel}
\usepackage{color}
\usepackage{siunitx}
\usepackage{array}
\usepackage{multirow}
\usepackage{amssymb}
\usepackage{tabularx}
\usepackage{amsthm}
\usepackage{tikzit}


\tikzstyle{nodo}=[fill=black, draw=black, shape=circle, inner sep=1.5pt]
\tikzstyle{class 1}=[fill=red, draw=red, shape=circle, inner sep=1.5pt]
\tikzstyle{class 2}=[fill=blue, draw=blue, shape=circle, inner sep=1.5pt]
\tikzstyle{class 3}=[fill=green, draw=green, shape=circle, inner sep=1.5pt]
\tikzstyle{class 4}=[fill=magenta, draw=magenta, shape=circle, inner sep=1.5pt]
\tikzstyle{class 5}=[fill={rgb,255: red,255; green,128; blue,0}, draw={rgb,255: red,255; green,128; blue,0}, shape=circle, inner sep=1.5pt]
\tikzstyle{class 6}=[fill=yellow, draw=yellow, shape=circle, inner sep=1.5pt]

\tikzstyle{arista}=[-, fill=none, draw=black]



\newcommand{\vertex}[4][black]{
    \draw[#1, fill=#1, inner sep=0pt] (#2, #3) circle (0.075) node(#4){};
}



\newcommand{\vertexLabel}[3][above]{
    \path (#2) node[#1]{#3};
}


\newcommand{\edge}[3][]{
    \draw[#1] (#2) -- (#3);
}






\newcommand{\vertexRegularPolygon}[7][]{
    \vertex[#1]
    { { (#2) + (#4) * -cos(deg(6.283*(#6 - 1)/(#5)-(1.571+3.142/(#5)))) } }
    { { (#3) + (#4) * sin(deg(6.283*(#6 - 1)/(#5)-(1.571+3.142/(#5)))) } }
    {#7};
}







\newcommand{\fatSpider}[6][]{
    \foreach \fatSpiderI in {1,...,#5} {
        \vertexRegularPolygon[#1]
                        {#2}{#3}
                        {#4}
                        {#5}
                        {\fatSpiderI}
                        {#6c\fatSpiderI}
                        {};

        \vertexRegularPolygon[#1]
                        {#2}{#3}
                        {2*(#4)}
                        {#5}
                        {\fatSpiderI}
                        {#6s\fatSpiderI}
                        {};
    }

    \foreach \fatSpiderI in {2,...,#5} {
        \pgfmathtruncatemacro{\fatSpiderIminusOne}{\fatSpiderI - 1};

        \foreach \fatSpiderJ in {1,...,\fatSpiderIminusOne} {
            \edge[#1]{#6c\fatSpiderI}{#6c\fatSpiderJ};
        }
    }

    \foreach \fatSpiderI in {1,...,#5} {
        \foreach \fatSpiderJ in {1,...,#5} {
            \ifthenelse
            {\equal{\fatSpiderI}{\fatSpiderJ}}
            {}
            {\edge[#1]{#6s\fatSpiderI}{#6c\fatSpiderJ};};
        }
    }
}

\usepackage{paralist}

\usepackage{amsfonts}
\usepackage{epsfig}
\usepackage{multicol}
\usepackage{amsmath,amssymb,amssymb,latexsym}
\usepackage{enumerate}
\usepackage{graphicx}
\usepackage{mathtools}

\DeclareMathOperator{\thin}{thin}
\DeclareMathOperator{\pthin}{pthin}
\DeclareMathOperator{\indthin}{thin_{ind}}

\DeclareMathOperator{\indpthin}{pthin_{ind}}
\DeclareMathOperator{\compthin}{thin_{cmp}}
\DeclareMathOperator{\comppthin}{pthin_{cmp}}

\newcommand{\N}{\mathbb{N}}

\DeclareMathOperator{\fpp}{prec-pthin}
\DeclareMathOperator{\fp}{prec-thin}
\DeclareMathOperator{\indfpp}{prec-pthin_{ind}}
\DeclareMathOperator{\indfp}{prec-thin_{ind}}
\DeclareMathOperator{\compfpp}{prec-pthin_{cmp}}
\DeclareMathOperator{\compfp}{prec-thin_{cmp}}
\DeclareMathOperator{\mirror}{mirror}
\DeclareMathOperator{\side}{side}
\DeclareMathOperator{\Little}{Little}
\DeclareMathOperator{\first}{first}
\DeclareMathOperator{\last}{last}

\newcommand{\pptFull}[1]{precedence proper $#1$-thin}

\newcommand{\ptFull}[1]{precedence $#1$-thin}

\newcommand{\set}[1][ ]{\{ #1 \}}

\newtheorem{theorem}{Theorem}
\newtheorem{proposition}[theorem]{Proposition}
\newtheorem{condition}{Condition}
\newtheorem{lemma}[theorem]{Lemma}
\newtheorem{corollary}[theorem]{Corollary}
\newtheorem{claim}[theorem]{Claim}

\theoremstyle{definition}

\theoremstyle{remark}


\begin{document}

\begin{frontmatter}
\title{Thinness and its variations on some graph families and coloring graphs of bounded thinness}

\author[UBA,ICC]{Flavia Bonomo-Braberman}
\ead{fbonomo@dc.uba.ar}

\author[UBA]{Eric~Brandwein}
\ead{ebrandwein@dc.uba.ar}

\author[UERJ]{Fabiano~S.~Oliveira}
\ead{fabiano.oliveira@ime.uerj.br}

\author[UFRJ]{Moysés~S.~Sampaio~Jr.}
\ead{moysessj@cos.ufrj.br}

\author[UBA]{Agustin~Sansone}
\ead{asansone@dc.uba.ar}

\author[UERJ,UFRJ]{Jayme~L.~Szwarcfiter}
\ead{jayme@nce.ufrj.br}

\address[UBA]{Universidad de Buenos Aires. Facultad de Ciencias Exactas y Naturales. Departamento de Computaci\'on. Buenos Aires,
Argentina.}
\address[ICC]{CONICET-Universidad de Buenos Aires. Instituto de
Investigaci\'on en Ciencias de la Computaci\'on (ICC). Buenos
Aires, Argentina.}
\address[UFRJ]{Universidade Federal do Rio de Janeiro, Brazil.}
\address[UERJ]{Universidade do Estado do Rio de Janeiro, Brazil.}

%
\begin{abstract} 
Interval graphs and proper interval graphs are well known graph classes, for which several generalizations have been proposed in the literature. In this work, we study the (proper) thinness, and several variations, for the classes of cographs, crowns graphs and grid graphs.

We provide the exact values for several variants of thinness (proper, independent, complete, precedence, and combinations of them) for the crown graphs $CR_n$. For cographs, we prove that the precedence thinness can be determined in polynomial time. 
We also improve known bounds for the thinness of $n \times n$ grids $GR_n$ and $m \times n$ grids $GR_{m,n}$, proving that $\left \lceil \frac{n-1}{3} \right \rceil \leq \thin(GR_n) \leq  \left \lceil \frac{n+1}{2} \right \rceil$. Regarding the precedence thinness, we prove that $\fp(GR_{n,2}) = \left \lceil \frac{n+1}{2} \right \rceil$ and that $\left \lceil \frac{n-1}{3} \right \rceil \left \lceil\frac{n-1}{2} \right \rceil + 1 \leq \fp(GR_n) \leq \left \lceil\frac{n-1}{2} \right \rceil^2+1$. As applications, we show that the $k$-coloring problem is NP-complete for precedence $2$-thin graphs and for proper $2$-thin graphs, when $k$ is part of the input. On the positive side, it is polynomially solvable for precedence proper 2-thin graphs, given the order and partition. 
\end{abstract}
%
%
\end{frontmatter}
%
\section{Introduction}

This work studies the classes of $k$-thin and proper $k$-thin graphs, which generalize interval and proper interval graphs, respectively. A graph $G$ is an \emph{interval graph} if there is a mapping of $V(G)$ on intervals of the real line, each  $v \in V(G)$ corresponding to the interval $I_v$, such that $(u,v) \in E(G)$ if and only if $I_u \cap I_v \neq \emptyset$ for all distinct $u,v \in V(G)$. Moreover, $G$ is called a \emph{proper interval graph} if there is such a mapping of vertices to intervals such that $I_u \not \subseteq I_v$ for all distinct $u,v \in V(G)$. There are several characterizations and recognition algorithms for interval and proper interval graphs~\cite{Ola-interval, Rob-box, B-L-PQtrees}. A well known characterization for interval graphs is the following:

\begin{theorem}[\hspace{1sp}\cite{Ola-interval}] \label{teo:IG_Carac}
    A graph $G$ is an interval graph if and only if there is an ordering $\sigma$ of $V(G)$ such that, for any triple $(p,q,r)$ of vertices of $V(G)$ ordered according to $\sigma$, if $(p,r) \in E(G)$, then $(q,r)\in E(G)$.
\end{theorem}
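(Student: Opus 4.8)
The plan is to establish the two implications separately, each by a direct argument.

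For the ``only if'' direction I would start from an interval representation $\{I_v\}_{v\in V(G)}$ of $G$, writing $\ell(v)$ and $r(v)$ for the endpoints of $I_v$, and take $\sigma$ to be the ordering of $V(G)$ by right endpoint, with ties broken arbitrarily. It is worth stressing that the \emph{right}-endpoint order is the one that works: the left-endpoint order fails already on the three-vertex path, whose center need not be placed first. Given a triple $p <_\sigma q <_\sigma r$ with $(p,r)\in E(G)$, pick a point $x\in I_p\cap I_r$; then $\ell(r)\le x$ since $x\in I_r$, and $x\le r(p)\le r(q)$ since $x\in I_p$ and $p <_\sigma q$, so $\ell(r)\le r(q)$. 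As also $\ell(q)\le r(q)\le r(r)$, the closed intervals $I_q$ and $I_r$ meet, i.e. $(q,r)\in E(G)$.

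For the ``if'' direction, let $\sigma=v_1,\dots,v_n$ be an ordering with the stated property. The structural fact I would isolate first is that, for every $k$, the ``left neighbourhood'' $L_k:=\{\,i<k:(v_i,v_k)\in E(G)\,\}$ is a suffix of $\{1,\dots,k-1\}$, i.e. it is empty or equals $\{\min L_k,\dots,k-1\}$; this follows at once from the hypothesis applied to the triple $(v_{\min L_k},v_i,v_k)$ for $\min L_k<i<k$. I would then set $I_{v_k}=[\,\ell_k,\,k\,]$ with $\ell_k:=\min(\{k\}\cup L_k)$, and check that for $i<k$ the intervals $I_{v_i}$ and $I_{v_k}$ intersect exactly when $(v_i,v_k)\in E(G)$. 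If $(v_i,v_k)\in E(G)$, then $\ell_k\le i$ and $\ell_i\le i\le k$, so $i\in I_{v_i}\cap I_{v_k}$. Conversely, since $I_{v_i}=[\ell_i,i]$ and $I_{v_k}=[\ell_k,k]$ with $i<k$, a nonempty intersection forces $\ell_k\le i<k$; hence $\ell_k\in L_k$, and either $i=\ell_k$ (so $(v_i,v_k)\in E(G)$) or $\ell_k<i<k$ and the hypothesis applied to $(v_{\ell_k},v_i,v_k)$ yields $(v_i,v_k)\in E(G)$.

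Most of this is routine bookkeeping. The two steps that genuinely require care are the choice of the right-endpoint order in the forward direction — the ``obvious'' left-endpoint order is incorrect — and, in the backward direction, the observation that the ordering condition is precisely equivalent to every left neighbourhood $L_k$ being a suffix, which is what makes the one-sided intervals $[\ell_k,k]$ reconstruct $G$ faithfully.
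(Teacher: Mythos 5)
Your proof is correct. Note that the paper does not prove this statement; Theorem~\ref{teo:IG_Carac} is quoted from Olariu's paper and used as a black box, so there is no in-paper argument to compare against. Your argument is the standard one: in the forward direction, sorting by right endpoint is exactly the right choice (and you are right that the left-endpoint order does not satisfy the stated condition in general, since on a $P_3$ it may place the centre vertex first, whereas the condition forces the centre to be non-first); in the backward direction, the key observation is that the ordering condition is equivalent to each left neighbourhood $L_k=\{i<k:(v_i,v_k)\in E(G)\}$ being a suffix of $\{1,\dots,k-1\}$, which lets the anchored intervals $[\ell_k,k]$ with $\ell_k=\min(\{k\}\cup L_k)$ reproduce the adjacency exactly. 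Both halves are verified carefully and the proof is complete.
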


The ordering $\sigma$ of Theorem~\ref{teo:IG_Carac} is said to be a \emph{canonical} ordering or interval ordering. An analogous characterization for proper interval graphs is described next.

\begin{theorem}[\hspace{1sp}\cite{Rob-box}]\label{teo:PIG_Carac}
    A graph $G$ is a proper interval graph if and only if there is an ordering $\sigma$ of $V(G)$ for which $\sigma$ and its reversal are both canonical.
\end{theorem}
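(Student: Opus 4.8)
The plan is to prove both implications after replacing the two‑sided hypothesis by a single symmetric condition. Write $\sigma$ as $v_1<_\sigma\cdots<_\sigma v_n$. Unfolding the definition of ``canonical'' for $\sigma$ and for its reversal $\sigma^R$ simultaneously, one checks that $\sigma$ and $\sigma^R$ are both canonical if and only if
\[
  (\star)\qquad\text{for all }i<j<k:\quad v_iv_k\in E(G)\ \Longrightarrow\ v_iv_j\in E(G)\ \text{ and }\ v_jv_k\in E(G).
\]
Indeed, applying the canonicity of $\sigma$ to the triple $(v_i,v_j,v_k)$ gives $v_iv_k\in E(G)\Rightarrow v_jv_k\in E(G)$, and applying the canonicity of $\sigma^R$ to the same triple (which $\sigma^R$ orders as $v_k,v_j,v_i$) gives $v_iv_k\in E(G)\Rightarrow v_iv_j\in E(G)$; the converse implication is immediate. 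So it suffices to show that $G$ is a proper interval graph if and only if $V(G)$ admits an ordering with property $(\star)$.

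For the ``only if'' part I would take a proper interval representation $\{I_v\}_{v\in V(G)}$ with $I_v=[L(v),R(v)]$ and, by a standard perturbation, assume the $2n$ endpoints are pairwise distinct. Since no $I_u$ contains another $I_v$, $L(u)<L(v)$ forces $R(u)<R(v)$, so the ordering of $V(G)$ by left endpoint coincides with its ordering by right endpoint; let $\sigma$ be this ordering. To check $(\star)$ on a triple $i<j<k$ with $v_iv_k\in E(G)$: from $L(v_i)<L(v_k)$ and $I_{v_i}\cap I_{v_k}\ne\emptyset$ we get $L(v_k)\le R(v_i)$, and then the chain $L(v_i)<L(v_j)<L(v_k)\le R(v_i)<R(v_j)<R(v_k)$ shows that $L(v_j)\in I_{v_i}\cap I_{v_j}$ and $L(v_k)\in I_{v_j}\cap I_{v_k}$, i.e. $v_iv_j,v_jv_k\in E(G)$.

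For the ``if'' part, assume $\sigma$ satisfies $(\star)$. The first step is to prove that for every $v$ the closed neighbourhood $N[v]$ is a set of consecutive vertices of $\sigma$: given positions $j<m<k$ with $v_j,v_k\in N[v]$, a case analysis on the position of $v$ relative to $\{j,m,k\}$ reduces, in each case, to a single use of $(\star)$ (the ``$v_iv_j$'' half when $v$ lies at or to the left of $m$, the ``$v_jv_k$'' half when $v$ lies at or to the right of $m$), giving $v_m\in N[v]$. Thus $N[v_i]=\{v_{g(i)},\dots,v_{f(i)}\}$ for some $g(i)\le i\le f(i)$. The second step is that $f$ is non‑decreasing: if $i<i'$ but $f(i)>f(i')$, then $i<i'<f(i)$ and $v_iv_{f(i)}\in E(G)$, so $(\star)$ forces $v_{i'}v_{f(i)}\in E(G)$, contradicting $f(i')<f(i)$. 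Finally I would put $I_{v_i}:=\bigl[\,i,\ f(i)+\tfrac{i}{2n}\,\bigr]$. For $i<j$, $I_{v_i}\cap I_{v_j}\ne\emptyset$ iff $j\le f(i)+\tfrac{i}{2n}$, which, since $0<\tfrac{i}{2n}<1$, holds iff $j\le f(i)$; because $N[v_i]$ is an interval of $\sigma$ and $j>i\ge g(i)$, this is precisely $v_iv_j\in E(G)$. The left endpoints $1,\dots,n$ are strictly increasing, and the right endpoints $f(i)+\tfrac{i}{2n}$ are strictly increasing because $f$ is non‑decreasing while $f(i)<f(j)$ already creates a gap of at least $1>\tfrac{j-i}{2n}$; hence no interval contains another, and $\{I_v\}$ is a proper interval representation of $G$.

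The routine parts are the endpoint inequalities, the non‑containment check, and the perturbation to distinct endpoints. The step I expect to be the actual work is the case analysis in the ``if'' direction showing that each $N[v]$ is consecutive in $\sigma$, since one must treat uniformly the situations where $v$ lies inside, straddles, or lies outside the triple $\{j,m,k\}$, choosing the appropriate half of $(\star)$ in each.
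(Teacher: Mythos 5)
The paper states this result as a citation to Roberts and gives no proof, so there is nothing internal to compare your argument against. Your proof is correct: the reformulation as the two-sided condition $(\star)$ is exactly what ``$\sigma$ and $\sigma^R$ both canonical'' unfolds to; the forward direction via ordering by (distinct) left endpoints and chasing the chain of inequalities is sound; and in the converse direction, the consecutivity of each $N[v]$, the monotonicity of $f$, and the explicit interval assignment $I_{v_i}=[i,\,f(i)+i/(2n)]$ all check out (for $i<j$ the intersection test reduces to $j\le f(i)$, the left endpoints are strictly increasing, and the right endpoints are strictly increasing since $f(i)+i/(2n)<f(i)+j/(2n)\le f(j)+j/(2n)$, so no interval contains another). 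This is essentially the standard argument one finds for Roberts' characterization of proper interval graphs via an ``indifference'' or ``umbrella-free'' order, with the minor modern touch of producing a concrete representation from the function $f=\max N[\cdot]$; nothing to flag.
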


The ordering $\sigma$ of Theorem~\ref{teo:PIG_Carac} is said to be a \emph{proper canonical} ordering or proper interval ordering.

A \emph{$k$-thin graph} $G$ is a graph for which there is a $k$-partition $\mathcal{V} = (V_1,V_2 \ldots, V_k)$ of $V(G)$, and an ordering $\sigma$ of $V(G)$ such that, for any triple $(p,q,r)$ of vertices of $V(G)$ ordered according to $\sigma$, if $p, q \in V_i$ for some $1 \leq i \leq k$ and $(p,r) \in E(G)$, then $(q,r)\in E(G)$. Such an ordering $\sigma$  is said to be \emph{consistent} with $\mathcal{V}$. A graph $G$ is called a \emph{proper $k$-thin graph} if $V(G)$ admits a $k$-partition $\mathcal{V}$ of $V(G)$ and an ordering $\sigma$ such that both $\sigma$ and its reversal are consistent with $\mathcal{V}$. An ordering of this type is said to be \emph{strongly consistent} with $\mathcal{V}$. 

Although not all graphs $G$ are (proper) interval graphs, all of them are (proper) $k$-thin graphs for some $k \geq 1$. This is so because if $k = |V(G)|$, any ordering of $V(G)$ is (strongly) consistent with the unique possible partition in which each part consists of a single vertex. Moreover, since (proper) interval graphs are precisely the (proper) $1$-thin graphs, the parameter $k$ of (proper) $k$-thin graphs measures ``how far'' a graph is from being a (proper) interval graph. 

Under such classes, some NP-complete problems can be solved in polynomial time, as proved in ~\cite{M-O-R-C-thinness, B-D-thinness, B-M-O-thin-tcs}.  For instance, generalized versions of $k$-coloring are polynomial-time solvable for graphs of bounded thinness, when the number of colors $k$ is also a  constant~\cite{B-D-thinness,B-M-O-thin-tcs}. 
Despite that, the complexity of $k$-coloring on graphs with bounded thinness at least two is open when $k \geq 3$ is part of the input (it is polynomial-time solvable for 1-thin graphs, i.e., interval graphs~\cite{Go-perf}). 

Variations of the concept of (proper) thinness have been studied in the literature, 
by constraining either the vertices that can share a part in the partition, or how vertices can be arranged in the ordering. The  classes of \emph{\ptFull{k}} and \emph{\pptFull{k} graphs} were defined in~\cite{BOSS-thin-prec-dam-lagos}, as (proper) $k$-thin graphs such that the vertices belonging to a same part must be consecutive in the (strongly) consistent ordering.
It still holds that precedence (proper) $1$-thin  graphs are equivalent to (proper) interval graphs. 
In~\cite{BOSS-thin-prec-dam-lagos}, among other results, it was presented a characterization of such classes based on threshold graphs. In~\cite{BGOSS-thin-oper}, there have been defined the classes of  (proper) $k$-independent-thin and (proper) $k$-complete-thin graphs, for which each part of the partition must be an independent and a complete set, respectively. The authors proved some bounds on the thinness for such variants for some graphs operations. In~\cite{B-B-lagos21-dam}, it is proved that $2$-independent-thin  graphs are equivalent to interval bigraphs, and that proper independent 2-thin graphs are equivalent to bipartite permutation graphs, so (proper) $k$-independent-thin graphs can be seen as generalizations of those classes.

The \emph{crown graph} $CR_n$ is the graph obtained from a complete bipartite graph $K_{n,n}$ by removing a perfect matching. It was proven in \cite{BGOSS-thin-oper} that $\thin(CR_n) \geq \frac{n}{2}$. 
The \emph{grid graph} $GR_{n,m}$ is defined as the Cartesian product of two path graphs $P_n$ and $P_m$. We denote $GR_{n,n}$ simply by $GR_n$. It was proved in~\cite{M-O-R-C-thinness} that $\frac{n}{4} \leq \thin(GR_n) \leq n+1$. In~\cite{Agus-Eric-tesis}, the upper bound for $\thin(GR_n)$ was improved to $\left \lceil \frac{2n}{3} \right \rceil$.

In this work, we provide a characterization of consistent solutions for the crown graphs $CR_n$, and solve all known variants of the thinness parameter for this class.  With respect to the grid graphs, we prove that the thinness of $GR_n$ is at least $\left \lceil \frac{n-1}{3} \right \rceil$ and at most $\left \lceil \frac{n+1}{2} \right \rceil$, which are tighter lower and upper bounds than the ones found in the literature. Also, we show that the precedence thinness of $GR_{2,n}$ is exactly $\left \lceil \frac{n+1}{2} \right \rceil$, whereas for $GR_n$ is between $\left \lceil \frac{n-1}{3} \right \rceil \left \lceil\frac{n-1}{2}\right \rceil + 1$ and $\left \lceil\frac{n-1}{2}\right \rceil^2+1$. Finally, we prove that the $k$-coloring problem is NP-complete for both \ptFull{2} and proper $2$-thin graphs when $k$ is part of the input, but polynomially solvable for \pptFull{2} graphs.

This paper is structured as follows. Section~\ref{sec:pre} presents the remaining definitions used throughout the paper. In Section~\ref{sec:crown}, we present a characterization of consistent solutions for crown graphs and solve all known variants of the thinness parameter for this class, summarized in Table~\ref{table1}. Section~\ref{sec:cographs} provides formulas for precedence thinness of the disjoint union and join of two graphs in terms of their precedence thinness, thus proving that it is possible to efficiently compute the precedence thinness of cographs. In Section~\ref{sec:grid}, we prove lower and upper bounds for the thinness and precedence thinness of grid graphs, improving the previously known ones. In Section~\ref{sec:color}, we prove that the $k$-coloring problem is NP-complete for both \ptFull{2} and proper $2$-thin graphs when $k$ is part of the input, but polynomially solvable for \pptFull{2} graphs. Also, we show that \pptFull{2} graphs are perfectly orderable. Finally, some concluding remarks are presented in Section~\ref{sec:conclusion}.

\section{Preliminaries} \label{sec:pre}

Let $G$ be a graph. We denote by $V(G)$ its vertex set and by $E(G)$ its edge set.
The \emph{size} (\emph{cardinality}) of a set $S$ is denoted by $|S|$. Let $u,v \in V(G)$, $u$ and $v$ are \emph{adjacent} if $(u,v) \in E(G)$.

Two graphs $G$ and $H$ are said to be \emph{isomorphic}, denoted by $G \cong H$, if there is a bijection $\theta : V(G) \rightarrow V(H)$ such that $(u,v) \in E(G)$ if, and only if, $(\theta(u),\theta(v)) \in E(H)$.

The \emph{subgraph} of $G$ \emph{induced} by the subset of vertices $V'\subseteq V(G)$, denoted by $G[V']$, is the graph $G[V']= (V',E')$, where  $E' =  \{ (u,v) \in E(G) \mid u, v \in V' \}$. An \emph{induced subgraph} of $G$ is a subgraph of $G$ induced by some subset of $V(G)$. 

Let $u \in V(G)$. The \emph{(open) neighborhood} of $u$, denoted by $N_G(u)$, is defined as $N_G(u) = \set[v \in V(G) \mid (u, v) \in E(G)]$. The \emph{closed neighborhood} of $u$ is denoted by $N_G[u] = \set[u] \cup N_G(u)$. Let $U \subseteq V(G)$. Define $N_G(U) = \bigcup_{u \in U} N_G(u)$ and $N_G[U] = \bigcup_{u \in U} N_G[u]$. When $G$ is clear in the context, it may be omitted from the notation. 

A \emph{clique} or \emph{complete set} (resp.\ \emph{stable set} or \emph{independent
set}) is a set of pairwise adjacent (resp.\ nonadjacent) vertices.  The clique graph $K_n$ is the graph such that $V(K_n)$ is a clique.

Let $G=(V,E)$, the \emph{complement} of $G$ is defined as $\overline{G}=(V,E')$ such that $E'=\{(u, v) \mid u,v \in V \text{ and } (u,v) \not\in E \}$.

The \emph{union} of $G_1$ and $G_2$ is
the graph $G_1 \cup G_2 = (V_1 \cup V_2, E_1 \cup E_2)$, and the
\emph{join} of $G_1$ and $G_2$ is the graph $G_1 \vee G_2 = (V_1
\cup V_2, E_1 \cup E_2 \cup \{(v,v') : v \in V_1, v' \in V_2\})$ (i.e.,
$\overline{G_1\vee G_2} = \overline{G_1} \cup \overline{G_2}$).
(The join is sometimes also noted by $G_1 \otimes G_2$, but we
follow the notation in~\cite{B-D-thinness}). 

The class of \emph{cographs} can be defined as the graphs that can
be obtained from trivial graphs by the union and join
operations~\cite{CorneilLerchsStewart81}.

The \emph{thinness} (resp. \emph{proper thinness}) of $G$, or $\thin(G)$ (resp. $\pthin(G)$), is defined as the minimum value $k$ for which $G$ is a $k$-thin graph (resp. proper $k$-thin graph). Similarly, the other types of thinness have their related parameters, listed next.  With respect to each restriction, we have

\begin{itemize}
    \item the \emph{precedence thinness} of $G$, or $\fp(G)$

    \item the \emph{independent thinness} of $G$, or $\indthin(G)$

    \item the \emph{complete thinness} of $G$, or $\compthin(G)$

     \item the \emph{precedence proper thinness} of $G$, or $\fpp(G)$

    \item the \emph{independent proper thinness} of $G$, or $\indpthin(G)$

    \item the \emph{complete proper thinness} of $G$, or $\comppthin(G)$
    
\end{itemize}

Also, as some of those restrictions can be combined, this leads to the following parameters: 
\begin{itemize}
    \item the \emph{precedence independent thinness} of $G$, or $\indfp(G)$

    \item the \emph{precedence complete thinness} of $G$, or $\compfp(G)$

    \item the \emph{precedence independent proper thinness} of $G$, or $\indfpp(G)$

    \item the \emph{precedence complete proper thinness} of $G$, or $\compfpp(G)$
    
\end{itemize}

A graph $G$ is \emph{bipartite} if its
vertex set can be partitioned into two independent sets, and this is denoted by $G=(V_1 \cup V_2,E)$, where $V_1$ and $V_2$ are independent. The \emph{complete bipartite} graph $K_{p,q}$ is a bipartite graph such that $|V_1|=p$, $|V_2|=q$, and for all $v_1 \in V_1$ and $v_2 \in V_2$, $(v_1,v_2) \in E$.

For a graph $G$ and $A, B \subseteq V(G)$ such that
$A \cap B = \emptyset$, the \emph{bipartite graph induced} by the two subsets, denoted by
$G[A,B]$, is the bipartite graph $G[A,B]=(A \cup B,E')$, where $E' \subseteq E(G)$ are the edges with one endpoint
in $A$ and one endpoint in $B$. 

A \emph{matching} $\mathcal{M}$ of a graph $G$ is defined as a set of pairwise non-adjacent edges of $G$. If  all vertices in $V(G)$ are endpoints of the edges in $\mathcal{M}$, then $\mathcal{M}$ is said to be a \emph{perfect matching}.

Let $\mathcal{M}$ be a perfect matching of $K_{n,n}$. The \emph{crown graph} $CR_n$ (also known as \emph{Hiraguchi graph}) is the graph on $2n$ vertices such that $V(CR_n) = V(K_{n,n})$ and $E(CR_n) = E(K_{n,n})) \setminus \mathcal{M}$. 

For positive integers $n$, $m$, the $(n \times m)$\emph{-grid}, noted $GR_{n,m}$, is the graph whose vertex set is $\{ (i,j): 1 \leq i \leq n, 1 \leq j \leq m \}$
and whose edge set is $\{ ((i,j),(k,l)) : |i-k| + |j-l| = 1, \text{ where } 1 \leq i,k \leq n, 1 \leq j,l \leq m \}$. We denote simply by $GR_n$ the $(n \times n)$-grid $GR_{n,n}$.





Let $\sigma$ be a sequence $(e_1,e_2,\ldots,e_k)$. We denote by $\overline{\sigma} = (e_k,e_{k-1},\ldots,e_1)$ the \emph{reversal} of $\sigma$, and by $\sigma_1 \oplus \sigma_2$ (or $\sigma_1\sigma_2$ ) the concatenation of the sequences $\sigma_1$ and $\sigma_2$. If $a$ precedes $b$ in $\sigma$, then we will denote $a <_{\sigma} b$. Given an ordering $<$ (that is, a linear order)  of a set of elements $X$, $\sigma_<$ denotes the sequence of vertices ordered according to $<$. 

For $S \subseteq X$, let $\first(S)$ and $\last(S)$ be the smallest and the greatest elements of $S$ according to $<$, respectively.
We say $S$ is \emph{consecutive in $X$ according to
$<$} if there is no $z$ in $X \setminus S$ such that $\first(S) < z
< \last(S)$. Notice that for each $u \in S$, there are at
most two elements $x \in X$ such that $\{u, x\}$ is consecutive in
$S$ according to $<$. Namely, if the elements of $X$ are ordered $s_1 < \dots < s_r$ and $u =
s_i$, such two elements are $s_{i-1}$ when $i > 1$ and $s_{i+1}$ when
$i < r$.

\begin{lemma}[Alternative characterization for strong consistency \cite{B-B-M-P-convex-jcss}]\label{prop:neigh:propthin} 
Given a graph $G$, an ordering $<$ of $V(G)$ and a partition $\mathcal{V}$ of $V(G)$ are strongly consistent if and only if for every $v \in V(G)$ and every $V \in \mathcal{V}$, the set $N[v] \cap (V \cup \{v\})$ is consecutive in $V \cup \{v\}$ according to $<$. In particular, $N[v] \cap V$ is consecutive in $V$ according to $<$.
\end{lemma}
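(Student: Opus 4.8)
The plan is to unwind both conditions into statements about the linear order $<$ and match them. First I would rephrase strong consistency symmetrically. Consistency of $\sigma_{<}$ says: for every $a<b<c$ in $<$ with $a,b$ in a common part, $(a,c)\in E(G)$ implies $(b,c)\in E(G)$. A triple ordered according to $\overline{\sigma_{<}}$ has the form $(c,b,a)$ for some $a<b<c$ in $<$, so consistency of $\overline{\sigma_{<}}$ reads: for every $a<b<c$ with $b,c$ in a common part, $(c,a)\in E(G)$ implies $(b,a)\in E(G)$. Hence $<$ and $\mathcal{V}$ are strongly consistent if and only if for every part $V$, all $u,w\in V$ with $u<w$, and every $x\in V(G)$: (i) if $u<w<x$ then $(u,x)\in E(G)$ implies $(w,x)\in E(G)$; and (ii) if $x<u<w$ then $(w,x)\in E(G)$ implies $(u,x)\in E(G)$.

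For the forward implication I would assume strong consistency, fix $v\in V(G)$ and a part $V$, and set $X=V\cup\{v\}$ and $S=N[v]\cap X$. If $S$ were not consecutive in $X$, there would be $z\in X\setminus S$ with $\first(S)<z<\last(S)$; since $v\in S$ we get $z\ne v$, so $z\in V$ and $(z,v)\notin E(G)$. If $z<v$, then $\first(S)<z$ forces $\first(S)$ to be some $u\in V$ with $(u,v)\in E(G)$ and $u<z<v$, and clause (i) applied to the triple $u<z<v$ yields $(z,v)\in E(G)$, a contradiction. If $z>v$, then symmetrically $\last(S)$ is some $w\in V$ with $(w,v)\in E(G)$ and $v<z<w$, and clause (ii) applied to $v<z<w$ yields $(z,v)\in E(G)$, again a contradiction. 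Hence $S$ is consecutive in $X$.

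For the converse I would assume the consecutiveness condition and verify consistency of $\sigma_{<}$ and of $\overline{\sigma_{<}}$ at once. Take a triple which, read in $<$, is $p<q<r$ (the $\sigma_{<}$ case) or $r<q<p$ (the $\overline{\sigma_{<}}$ case), with $p,q$ in a common part $V_i$ and $(p,r)\in E(G)$; the goal is $(q,r)\in E(G)$. Apply the hypothesis to $v:=r$ and $V:=V_i$: the set $S:=N[r]\cap(V_i\cup\{r\})$ is consecutive in $X:=V_i\cup\{r\}$. Now $p\in S$ (since $p\in V_i$ and $(p,r)\in E(G)$) and $r\in S$, and in both cases $q$ lies strictly between $p$ and $r$, so $\first(S)<q<\last(S)$; since $q\in V_i\subseteq X$, consecutiveness forces $q\in S$, whence $q\in N[r]$, and as $q\ne r$ we obtain $(q,r)\in E(G)$. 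The ``in particular'' clause is immediate: any $z\in V\setminus N[v]$ with $\first(N[v]\cap V)<z<\last(N[v]\cap V)$ would lie strictly between $\first$ and $\last$ of $N[v]\cap(V\cup\{v\})$ while remaining outside that set, contradicting the main statement.

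There is no deep obstacle here: the argument is a careful unwinding of the definitions. The two points that need attention are translating the reversal-consistency of $\overline{\sigma_{<}}$ into clause (ii) stated in terms of the original order $<$, and, in the forward direction when $v\notin V$, tracking $\first(S)$ and $\last(S)$ and observing that the side of $v$ on which a hypothetical gap vertex $z$ falls is exactly what selects between clauses (i) and (ii).
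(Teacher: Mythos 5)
The paper does not prove this lemma itself: it is stated with a citation to an external reference, so there is no in-paper proof to compare against. On its own merits your argument is correct. The translation of strong consistency into the two clauses (i) and (ii) in terms of the original order $<$ is exactly right, the forward direction correctly pins down $\first(S)$ and $\last(S)$ as non-$v$ elements of $V$ adjacent to $v$ and invokes the appropriate clause depending on which side of $v$ the gap vertex $z$ lies, and the converse cleanly handles both $\sigma_<$ and $\overline{\sigma_<}$ by applying consecutiveness to $v:=r$ and $V:=V_i$, observing that $q$ is sandwiched between $p$ and $r$ (hence between $\first(S)$ and $\last(S)$) in either orientation. The ``in particular'' deduction is also valid, since $\first$ of the larger set is at most $\first$ of the smaller and dually for $\last$. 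This is a complete and careful unwinding of the definitions; no gaps.
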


Given a graph $G$, a partition $\mathcal{V}$ of $V(G)$ and an ordering $\sigma$ of $V(G)$, we say that the pair $(\mathcal{V}, \sigma)$ is a \emph{solution} which is \emph{(strongly) consistent} if $\sigma$ is a (strongly) consistent ordering with respect to $\mathcal{V}$ and $G$. 

Given an order $<$ and a partition of the vertices, we say that a triple $(r, s, t)$ \emph{breaks the consistency} meaning that $r < s < t$, $r$ and $s$ belong to the same class, and there is an edge between $r$ and $t$ but there is no edge between $s$ and $t$.

Let $G$ be a graph, $H$ be an induced subgraph of $G$, and let $f$  be one of the parameters under study in this paper (i.e., those appearing in Table~\ref{table1}).  
An observation that we will use often in the proofs of lower bounds is that $f(H) \leq f(G)$. This holds because the order and partition of $V(G)$ involved in the definition of $f$, when restricted to $V(H)$, preserve all the desired properties in $H$, i.e., consistency, strong consistency, precedence, and independence or completeness of the classes.

A \emph{coloring} of a graph $G$ is a labeling of its vertices such that adjacent vertices have distinct labels, regarded as colors. A
\emph{$k$-coloring} is a coloring that maps the vertex set into a
set of size $k$. Along this paper, we will regard a $k$-coloring
of a graph $G$ as a function $f:V(G)\to\N$ such that $f(v) \leq k$
for every $v\in V(G)$, and $f(u) \neq f(v)$ for adjacent vertices $u$ and $v$. A graph is
\emph{$k$-colorable} if it admits a $k$-coloring. 
The \emph{$k$-coloring problem}
takes as input a graph $G$ and a natural number $k$, and consists
of deciding whether $G$ is $k$-colorable. 

It is known that generalized versions of $k$-coloring are polynomial-time solvable for graphs of bounded thinness, when $k$ is a constant~\cite{B-D-thinness,B-M-O-thin-tcs,C-T-V-Z--H-top-thin}. But the complexity of $k$-coloring on graphs with bounded thinness at least two remained open when $k$ is part of the input (it is polynomial-time solvable for 1-thin graphs, i.e., interval graphs~\cite{Go-perf}). 

\section{The thinness of crown graphs} \label{sec:crown}
In this section, we prove the exact value of the thinness and its variations for $CR_n$, along with consistent orderings and partitions in each case. The main results are summarized in Table~\ref{table1}. 

\begin{table}
\begin{center}
\begin{tabular}{|c|c|c|c|c|c|}
  \hline
   & $CR_1$ & $CR_2$ & $CR_3$ & $CR_n$, $n\geq 4$ even & $CR_n$, $n\geq 4$ odd \\
  \hline
  $\thin$ & $1$ & $1$ & $2$ & $n-1$ & $n-1$ \\
  $\pthin$ & $1$ & $1$ & $2$ & $n-1$ & $n$ \\
  $\indthin$ & $1$ & $2$ & $3$ & $n$ & $n$ \\
  $\indpthin$ & $1$ & $2$ & $3$ & $n$ & $n$ \\
  $\compthin$ & $2$ & $2$ & $3$ & $2n-4$ & $2n-4$ \\
  $\comppthin$ & $2$ & $2$ & $3$ & $2n-4$ & $2n-4$ \\
  $\fp$ & $1$ & $1$ & $2$ & $n-1$ & $n-1$  \\
  $\fpp$ & $1$ & $1$ & $3$ & $n+1$ & $n+1$ \\
  $\indfp$ & $1$ & $3$ & $4$ & $n+1$ & $n+1$  \\
  $\indfpp$ & $1$ & $3$ & $4$ & $n+1$ & $n+1$ \\
  $\compfp$ & $2$ & $2$ & $4$ & $2n-2$ & $2n-2$  \\
  $\compfpp$ & $2$ & $2$ & $4$ & $2n-2$ & $2n-2$ \\
  \hline
\end{tabular}
\caption{The value of different thinness variations on crown graphs.}\label{table1} 
\end{center}
\end{table}


We will introduce next some additional definitions and notations that are necessary for this section. Let $G = CR_n$. Define  $\{A_n, A'_n\}$ as the bipartition of $V(G)$ such that $A_n=\{v_1,v_2,\ldots, v_n\}$, $A'_n=\{v'_1,v'_2,\ldots, v'_n\}$ and  $v'_i$ is the only vertex of $A'_n$ that is not adjacent to $v_i$, for all $1 \leq i \leq n$. We will also define $\mirror(v_i)=v'_i$, $\mirror(v'_i)=v_i$, $\side(v_i) = A_n$, and $\side(v'_i)=A'_n$. Given an ordering $<$ of $V(G)$, we say that $v$ is a \textit{little} vertex if $v < \mirror(v)$, and $v$ is a \textit{big} vertex otherwise. Similarly, for a set of vertices $S \subseteq V(G)$, we define $\Little(S)$ as the subset of $S$ that are little vertices.

\subsection{Characterization of consistent solutions for $CR_n$} 

Let $<$ and $\mathcal{V}$ be an ordering and a partition of $V(G)$, respectively. Next, we define two conditions that will be used to characterize consistency of $<$ and $\mathcal{V}$ in $G$.

\begin{condition}\label{cond1:crn:thin}
For any $V \in \mathcal{V}$ and $v \in Little(V)$, $v = \first(V \cap \side(v))$.
\end{condition}

\begin{condition}\label{cond2:crn:thin}
For any $V \in \mathcal{V}$ and $1 \leq i,j \leq n$ such that
 $v_i,v'_j \in V$, there is no $v'_z \in A'_n$ with $z  \neq i$ such that $v_i < v'_j< v'_z$ (resp. $v_z \in A_n$ with $z \neq j$ such that $v'_j < v_i< v_z$). 
\end{condition}

These two conditions are necessary and sufficient to characterize consistency in $CR_n$, as shown in the following theorem.

\begin{theorem}\label{car:crn:thin}
Let $G=CR_n$, and let $<$ and $\mathcal{V}$ be an ordering and a partition of $V(G)$, respectively. 
Then $<$ and $\mathcal{V}$ are consistent for $G$ if, and only if, both Conditions~\ref{cond1:crn:thin} and~\ref{cond2:crn:thin} hold.
\end{theorem}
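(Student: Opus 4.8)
The plan is to prove both directions by directly manipulating the defining triple condition for consistency, using the structure of $CR_n$ (namely that $v_i$ is adjacent to every $v'_j$ with $j\neq i$, and nonadjacent only to its mirror $v'_i$, and similarly within sides there are no edges at all). First I would observe that the only triples $(r,s,t)$ that can break consistency must have $r,s$ in the same class and $r<s<t$ with $r$ adjacent to $t$ but $s$ not adjacent to $t$. Since sides are independent sets, nonadjacency of $s$ and $t$ means either $t$ is on the same side as $s$, or $t=\mirror(s)$; and adjacency of $r$ and $t$ forces $r$ and $t$ to be on opposite sides and $t\neq\mirror(r)$. Splitting on whether $r$ and $s$ are on the same side or not gives exactly two cases, which I expect to correspond to the two conditions.

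For the ``only if'' direction I would argue the contrapositive for each condition. If Condition~\ref{cond1:crn:thin} fails, there is a class $V$, a little vertex $v\in V$, and another vertex $w\in V\cap\side(v)$ with $w<v$; then I would take $r=w$, $s=v$, $t=\mirror(v)$: since $v$ is little, $v<\mirror(v)$, so $w<v<\mirror(v)$; $s=v$ is nonadjacent to $t=\mirror(v)$ by definition of mirror, while $r=w$ is adjacent to $t=\mirror(v)$ because $w\neq v$ and $w$ is on the same side as $v$, hence on the opposite side of $\mirror(v)$ and is not its mirror. So $(w,v,\mirror(v))$ breaks consistency. If Condition~\ref{cond2:crn:thin} fails, we get $v_i,v'_j\in V$ (same class, opposite sides) and some $v'_z$ with $z\neq i$ and $v_i<v'_j<v'_z$; take $r=v_i$, $s=v'_j$, $t=v'_z$: $s=v'_j$ and $t=v'_z$ are on the same side $A'_n$, so nonadjacent; $r=v_i$ and $t=v'_z$ are on opposite sides and $z\neq i$, so adjacent; hence $(v_i,v'_j,v'_z)$ breaks consistency, and the symmetric sub-case is identical with sides swapped.

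For the ``if'' direction, assume both conditions hold and suppose for contradiction that some triple $(r,s,t)$ breaks consistency: $r<s<t$, $r$ and $s$ are in the same class $V$, $(r,t)\in E(G)$, $(s,t)\notin E(G)$. From $(r,t)\in E$ we know $r$ and $t$ lie on opposite sides and $t\neq\mirror(r)$. From $(s,t)\notin E$ and the fact that each side is independent, either $s$ and $t$ are on the same side, or $s$ and $t$ are on opposite sides with $t=\mirror(s)$. I would handle these two cases. In the first case ($s$ and $t$ on the same side, hence $r$ on the opposite side): writing $r=v_i$ and $s=v'_j$, $t=v'_z$ (up to swapping sides), we have $z\neq i$ (since $t\neq\mirror(r)$ means $v'_z\neq v'_i$) and $v_i<v'_j<v'_z$, directly contradicting Condition~\ref{cond2:crn:thin}. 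In the second case ($t=\mirror(s)$, so $r$ and $s$ are on the same side — consistent with them being in the same class), the relation $s<t=\mirror(s)$ says $s$ is little, and $r<s$ with $r\in V\cap\side(s)$ contradicts Condition~\ref{cond1:crn:thin}, which forces $s$ to be the first element of $V\cap\side(s)$. In both cases we reach a contradiction, so no consistency-breaking triple exists.

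The main obstacle I anticipate is bookkeeping the side/mirror case analysis cleanly — in particular making sure the ``resp.'' symmetric clause of Condition~\ref{cond2:crn:thin} is invoked correctly when $r$ is in $A'_n$ rather than $A_n$, and double-checking in the second case of the ``if'' direction that $t=\mirror(s)$ genuinely forces $r$ and $s$ onto the \emph{same} side (it does, because $r$ and $t$ are on opposite sides while $s$ and $t$ are on opposite sides). A secondary subtlety is verifying that adjacency $(r,t)\in E(G)$ indeed rules out $r$ and $t$ being on the same side: this uses that the two sides $A_n,A'_n$ are independent sets in $CR_n$, which is immediate from $E(CR_n)\subseteq E(K_{n,n})$. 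Once the case split is pinned down, each case is a one-line consequence of the corresponding condition or of the $CR_n$ adjacency rule, so no lengthy computation is needed.
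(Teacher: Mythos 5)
Your proof is correct and takes essentially the same approach as the paper's: exhibiting a consistency-breaking triple $(w,v,\mirror(v))$ or $(v_i,v'_j,v'_z)$ when a condition fails, and for the converse, splitting on the reason for the nonadjacency $(s,t)\notin E$ (same-side vs.\ mirror), which corresponds exactly to the paper's split on whether $\side(q)=\side(p)$ since $t$ must lie opposite $r$. The case analysis and bookkeeping you anticipated are all handled correctly.
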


\begin{proof}
Suppose that $<$ and $\mathcal{V}$ are consistent for $G$. First suppose that Condition~\ref{cond1:crn:thin} does not hold. That is, there are $V \in \mathcal{V}$ and  $v \in V$ such  that $v<\mirror(v)=v'$ and $w=\first(V \cap \side(v)) < v$. There is a contradiction with the fact that $<$ and $\mathcal{V}$ are consistent, as $(w,v')\in E(G)$ and $(v,v') \not \in E(G)$. Now suppose that Condition~\ref{cond2:crn:thin} does not hold. That is, there are $V \in \mathcal{V}$, $v_i,v'_j \in V$  and $z \not \in \{i,j\}$, such that either $v_i < v'_j< v'_z$, or $v'_j < v_i< v_z$. 
In either case, this contradicts the fact that $<$ and $\mathcal{V}$ are consistent, as $(v_i,v'_z) \in E(G)$ and $(v'_j,v'_z) \not \in E(G)$ (resp. $(v'_j,v_z) \in E(G)$ and $(v_i,v_z) \not \in E(G)$).

Consider now that both conditions hold for $<$ and $\mathcal{V}$. Suppose that $<$ is not consistent with $\mathcal{V}$. That is, there are $V \in \mathcal{V}$ and $p< q< r$ such that $p,q \in V$, $(p,r) \in E(G)$ and $(q,r) \not \in E(G)$. Consider, without loss of generality that $\side(p) = A_n$. If $\side(q) = A_n = \side(p)$, then $r = q'$, as $q'$ is the only vertex of $A'_n$ that is not adjacent to $q$.  This fact contradicts Condition~\ref{cond1:crn:thin}, as $q< q'$, and therefore $q$ is little, but $q \neq \first(V \cap \side(q))$, since $p< q$. 
If $\side(q) = A'_n \neq \side(p)$, then $\side(r) = A'_n$, as $r$ is adjacent to $p$. Then there is $v'_z \in A'_n$ with $z  \neq i$ such that $v_i < v'_j< v'_z$, for $v_i = p$, $v'_j = q$, and $v'_z = r$, contradicting Condition~\ref{cond2:crn:thin}.
\end{proof}

\subsection{Consequences for consistent solutions for $CR_n$}

\begin{corollary}
    \label{pitufos-class}
    If $v$ and $w$ are two distinct little vertices of $G=CR_n$ and $\side(v)=\side(w)$, then they cannot belong to the same class in a consistent solution.
\end{corollary}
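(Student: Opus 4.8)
The plan is to derive the corollary directly from Condition~\ref{cond1:crn:thin} in Theorem~\ref{car:crn:thin}. Suppose for contradiction that $v$ and $w$ are two distinct little vertices of $G = CR_n$ with $\side(v) = \side(w)$, and that they belong to the same class $V \in \mathcal{V}$ in a consistent solution $(<, \mathcal{V})$. Then both $v$ and $w$ lie in $V \cap \side(v)$ (noting $\side(v) = \side(w)$), and both are little, so Condition~\ref{cond1:crn:thin} forces $v = \first(V \cap \side(v))$ and simultaneously $w = \first(V \cap \side(v))$. Since the first element of a set under a linear order is unique, this gives $v = w$, contradicting the assumption that $v$ and $w$ are distinct.

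The key steps, in order, are: (i) invoke Theorem~\ref{car:crn:thin} to record that any consistent solution satisfies Condition~\ref{cond1:crn:thin}; (ii) observe that both $v$ and $w$ belong to $V \cap \side(v)$ and are little vertices, so Condition~\ref{cond1:crn:thin} applies to each of them with the same set $V \cap \side(v)$; (iii) conclude $v = \first(V \cap \side(v)) = w$, contradicting distinctness.

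There is essentially no obstacle here — the corollary is an almost immediate unpacking of Condition~\ref{cond1:crn:thin}, since that condition already asserts that within a fixed class, on a fixed side, there can be at most one little vertex (the minimum of that intersection). The only thing to be careful about is the bookkeeping that $\side(v) = \side(w)$ ensures $v$ and $w$ lie in the \emph{same} set $V \cap \side(v)$, so that the uniqueness of $\first$ can be applied. One could phrase the proof in a single sentence, but I would spell out the contradiction explicitly for readability, as above.
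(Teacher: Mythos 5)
Your proof is correct and is essentially the paper's own argument, which consists of a single sentence ("Otherwise, it would contradict Condition~\ref{cond1:crn:thin}."); you have simply unpacked that contradiction explicitly via the uniqueness of $\first$, which is exactly the intended reasoning.
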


\begin{proof} Otherwise, it would contradict Condition~\ref{cond1:crn:thin}.
\end{proof}

\begin{corollary}
    \label{last-vertices}
    Let $(\mathcal{V},<)$ be a consistent solution for $G=CR_n$.  If $v$ and $w$ are two vertices of $G$ such that $\side(v)\neq \side(w)$, $v$ and $w$ belong to the same class of $\mathcal{V}$, and $v < w$, 
    then $w$ must be either the last vertex or the penultimate vertex of $\side(w)$. Moreover, if $w$ is the
    penultimate vertex of $\side(w)$, then the last vertex of $\side(w)$ must be $\mirror(v)$.
\end{corollary}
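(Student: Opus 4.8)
The plan is to read the statement off almost directly from Condition~\ref{cond2:crn:thin}. By the symmetry built into that condition (the ``resp.'' clause, obtained by swapping the roles of $A_n$ and $A'_n$ and of the indices $i$ and $j$), it suffices to treat the case $\side(w)=A'_n$. In that case $w=v'_j$ for some $j$ and $v=v_i$ for some $i$, and the hypotheses of the corollary say that $v_i$ and $v'_j$ lie in a common class $V\in\mathcal{V}$ with $v_i<v'_j$.

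Next I would apply Condition~\ref{cond2:crn:thin} to this pair $v_i,v'_j\in V$. Since $v_i<v'_j$ already holds, the condition asserts that there is no $v'_z\in A'_n$ with $z\neq i$ such that $v'_j<v'_z$; equivalently, every vertex of $A'_n$ occurring after $v'_j$ in the ordering $<$ must be of the form $v'_z$ with $z=i$, i.e.\ must equal $v'_i=\mirror(v_i)=\mirror(v)$. In particular $v'_j$ has \emph{at most one} successor inside $\side(w)=A'_n$.

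The conclusion is then immediate. If $v'_j$ has no successor in $A'_n$, then $w=v'_j$ is the last vertex of $\side(w)$. If it has exactly one, that successor is $\mirror(v)$, so $w=v'_j$ is the penultimate vertex of $\side(w)$ and $\mirror(v)$ is its last vertex, exactly as claimed. This argument also covers the degenerate possibility $i=j$, in which $w=\mirror(v)$: there no $v'_z$ with $z\neq i$ can follow $v'_j=v'_i$, forcing $w$ to be the last vertex of $A'_n$, which is the first alternative.

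There is essentially no obstacle here beyond invoking the right condition and the earlier characterization (Theorem~\ref{car:crn:thin}); the only point deserving a moment's care is that Condition~\ref{cond2:crn:thin} deliberately exempts the index $z=i$, and it is precisely this exemption that lets $\mirror(v)$ --- and nothing else --- sit after $w$ within $\side(w)$. I would also verify once that the case $\side(w)=A_n$ matches the parenthetical alternative of Condition~\ref{cond2:crn:thin} verbatim, so that the symmetric argument requires no additional work.
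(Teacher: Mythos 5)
Your proof is correct and is precisely the paper's own argument, which is stated there as a one-line appeal to Condition~\ref{cond2:crn:thin}; you have simply unpacked that condition to show exactly how its ``$z\neq i$'' exemption yields the last/penultimate dichotomy and pins the last vertex to $\mirror(v)$.
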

\begin{proof} Otherwise, it would contradict Condition~\ref{cond2:crn:thin}.
\end{proof}

\begin{claim}
    \label{not-pitufo}
    Given four distinct vertices of $G=CR_n$, $v, w$ in one side of the bipartition and $x, y$ in the other side, and such that, in a consistent solution:
    \begin{itemize}
        \item $v$ and $x$ belong to the same class;
        \item $w$ and $y$ belong to the same class;
        \item $v < x$;
        \item $w < y$;
    \end{itemize}
    then at least one of $\{x, y\}$ is not a little vertex.
\end{claim}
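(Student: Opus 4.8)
The plan is to proceed by contradiction. Suppose both $x$ and $y$ are little, that is, $x < \mirror(x)$ and $y < \mirror(y)$. Write $A=\side(v)=\side(w)$ and $B=\side(x)=\side(y)$, so that $\mirror(v),\mirror(w)\in B$. First I would apply Corollary~\ref{last-vertices} to the pair $(v,x)$ — legitimate since $v<x$, $\side(v)\neq\side(x)$, and $v,x$ share a class — and likewise to the pair $(w,y)$. This yields that each of $x$ and $y$ is either the last or the penultimate vertex of $B$ according to $<$. Since $x\neq y$, one of them must be the last vertex of $B$ and the other the penultimate vertex of $B$, and the penultimate one precedes the last one in $<$.

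Next I would split into the two cases. If $x$ is the penultimate and $y$ the last vertex of $B$, then the ``moreover'' clause of Corollary~\ref{last-vertices} applied to $(v,x)$ forces the last vertex of $B$ to equal $\mirror(v)$, i.e.\ $y=\mirror(v)$; since $y$ is little this gives $y<\mirror(y)=v$, while $v<x<y$ (the first inequality by hypothesis, the second because $x$ precedes $y$ within $B$) — a contradiction. If instead $y$ is the penultimate and $x$ the last vertex of $B$, the same clause applied to $(w,y)$ gives $x=\mirror(w)$, hence $x<\mirror(x)=w$ because $x$ is little, while $w<y<x$ — again a contradiction. Either way the assumption collapses, so at least one of $x,y$ fails to be little.

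The whole argument is really just a double invocation of Corollary~\ref{last-vertices} (equivalently, of Condition~\ref{cond2:crn:thin}) plus the trivial fact that in $B$ the penultimate vertex comes before the last one. I expect the only delicate point to be the bookkeeping of mirrors: one must feed the ``penultimate $\Rightarrow$ the last vertex of that side is the mirror of the earlier vertex'' clause the correct pair in each case — $(v,x)$ when $x$ is penultimate, $(w,y)$ when $y$ is penultimate — and then match it against the right inequality chain ($v<x<y$ or $w<y<x$). Beyond that I do not foresee any genuine obstacle.
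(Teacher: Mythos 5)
Your proof is correct and follows essentially the same route as the paper: apply Corollary~\ref{last-vertices} to both class pairs, conclude that $\{x,y\}$ are precisely the penultimate and last vertices of their side, and then use the ``moreover'' clause to identify the last one as a mirror, contradicting littleness. The paper phrases it directly via WLOG $x<y$ rather than as a two-case contradiction, but the underlying argument is the same.
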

\begin{proof}
    Without loss of generality let us assume that $x < y$. By  Corollary~\ref{last-vertices} we know that
    $x$ must be either the last vertex or the penultimate vertex of $\side(x)$; but since $x < y$, we can
    say that $x$ and $y$ are respectively the penultimate and last vertex of $\side(x)$, and that $y=\mirror(v)$. And since $v < x < y$, this implies that $y$ is not a little vertex.
\end{proof}

\begin{claim}\label{cr-at-least}
In a consistent solution for $G=CR_n$, the number of classes containing a little vertex is at least $n-1$.
In other words, at most one class can contain two little vertices, one of each side, since by Corollary~\ref{pitufos-class} no class can contain more than one little vertex of the same side.
\end{claim}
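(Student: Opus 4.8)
The plan is to bound the number of classes containing a little vertex from below by counting the little vertices themselves and then controlling how many of them can be ``absorbed'' into a single class. There are $2n$ vertices in $CR_n$, and for each index $i$ exactly one of $v_i, v'_i$ is little (the one that comes first in $<$), with ties impossible since $v_i \neq v'_i$; hence there are exactly $n$ little vertices. By Corollary~\ref{pitufos-class}, two little vertices on the same side can never share a class, so any class contains at most two little vertices, and if it contains two then they lie on opposite sides of the bipartition.

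The key step is to rule out \emph{two distinct} classes each containing two little vertices. Suppose for contradiction that classes $V$ and $W$ each contain two little vertices, say $V \supseteq \{v, x\}$ with $v, x$ little and on opposite sides, and $W \supseteq \{w, y\}$ with $w, y$ little and on opposite sides, and the four vertices are distinct. Within $V$, since $v$ and $x$ are little and on opposite sides, the smaller of the two, say $v$, satisfies $v < \mirror(v)$ and (by Condition~\ref{cond1:crn:thin}) is the first vertex of $V \cap \side(v)$; symmetrically arrange names so that $v < x$ and $w < y$, with $v, w$ on one side and $x, y$ on the other (relabel which side is which if necessary so that the hypotheses of Claim~\ref{not-pitufo} are met). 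Then Claim~\ref{not-pitufo} applies directly and tells us that at least one of $x, y$ is \emph{not} little, contradicting the assumption that both $x$ and $y$ are little.

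Therefore at most one class contains two little vertices; every other class containing a little vertex contains exactly one. If there are $c$ classes containing a little vertex, they contain at most $c + 1$ little vertices in total (at most $2$ in one distinguished class, at most $1$ in each of the remaining $c - 1$), so $c + 1 \geq n$, i.e., $c \geq n - 1$, as claimed.

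The main obstacle is making sure the relabelling needed to invoke Claim~\ref{not-pitufo} is legitimate: we must check that after swapping the roles of the two sides (and the roles within each class) we genuinely land in the configuration ``$v, w$ in one side, $x, y$ in the other, $v < x$, $w < y$, $(v,x)$ in one class, $(w,y)$ in another'' with all four vertices distinct. Distinctness is immediate from $V \neq W$ and from each class containing one little vertex per side; the ordering conditions $v < x$, $w < y$ are achieved simply by naming the earlier little vertex of each pair first; and the two little vertices of $V$ being on opposite sides (guaranteed above) lets us align the side-labels of $V$ and $W$ consistently. Once this bookkeeping is done, the claim is immediate.
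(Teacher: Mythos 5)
The proposal has a genuine gap in the step that invokes Claim~\ref{not-pitufo}. You assume that after ``relabelling'' you can always put the four little vertices into the configuration required by that claim: $v, w$ on one side, $x, y$ on the other, $\{v,x\}$ and $\{w,y\}$ sharing classes, with $v < x$ \emph{and} $w < y$. But this is not always achievable. Fix the natural side labels: let $v, w \in A_n$ and $x, y \in A'_n$, with $\{v,x\}$ in one class and $\{w,y\}$ in another. Claim~\ref{not-pitufo} directly rules out the cases $v<x,\,w<y$ and (by swapping side names, which reverses both inequalities simultaneously) $v>x,\,w>y$. However, if $v<x$ while $w>y$, no relabelling helps: swapping side labels flips \emph{both} inequalities and gives $v>x$, $w<y$; swapping the names within one class places the two ``first-named'' vertices on opposite sides of the bipartition, violating the hypothesis that $v$ and $w$ lie on the same side; and swapping which class is called $V$ versus $W$ just permutes the two inequalities. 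So the mixed-orientation case is genuinely outside the reach of Claim~\ref{not-pitufo}, and your proof does not address it.

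The paper's own proof recognizes this: after disposing of the two same-orientation cases via Claim~\ref{not-pitufo}, it explicitly takes up the case $v<x$, $w>y$ and derives a contradiction there through a separate argument using Corollary~\ref{last-vertices} (showing that $w$ and $x$ must be last or penultimate in their sides, then deducing $\mirror(w)=\mirror(v)$ and hence $w=v$). To repair your proof you would need to add exactly this missing case analysis; the counting argument at the end (there are $n$ little vertices, at most one class absorbs two of them, hence at least $n-1$ classes contain a little vertex) is fine and matches the paper's conclusion.
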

\begin{proof}
Suppose, on the contrary, that there are 4 distinct little vertices $v$, $w$, $x$ and $y$ such as $v,w \in A_n$ and $x,y \in A'_n$,
$v$ and $x$ belong to the same class, $w$ and $y$ belong to the same class (by Corollary~\ref{pitufos-class}, no class can contain more than one little vertex of the same side).
By Claim \ref{not-pitufo}, it is neither possible that $v<x$ and $w<y$, nor that $v>x$ and $w>y$.

Suppose then, without loss of generality, that $v < x$ and $w > y$. By Corollary~\ref{last-vertices},
$w$ and $x$ can only be the last or penultimate vertices in $A_n$ and $A'_n$, respectively. Moreover, if $w$ is the penultimate vertex of $A_n$ then $\last(A_n)=\mirror(y)$, 
    and if $x$ is the
    penultimate vertex of $A'_n$ then $\last(A'_n)=\mirror(v)$.
Without loss of generality, let us assume $w > x$. Notice as $\mirror(w) > w > x$
the vertex $x$ cannot be the last vertex in $A'_n$, implying that $x$ is the penultimate vertex in $A'_n$ and $\mirror(w) = \last(A'_n)$. 
But then $\mirror(w) = \last(A'_n)=\mirror(v)$; thus $w = v$, a contradiction.

By the arguments above, there is at most one pair of little vertices $\{v, x\}$ such that they belong to the same class.
Then there must be at least
$$|Little(A_n)| + |Little(A'_n)| - 1 = n-1$$
different classes containing the little vertices. 
\end{proof}

\subsection{Consequences for strongly consistent solutions for $CR_n$}

\begin{corollary}
    \label{pitufos-class-st}
    If $v$ and $w$ are two distinct big vertices of $G=CR_n$ and $\side(v)=\side(w)$, then they cannot belong to the same class in a strongly consistent solution.
\end{corollary}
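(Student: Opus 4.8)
The plan is to reduce the statement to Corollary~\ref{pitufos-class} by passing to the reversed ordering. Recall that a solution $(\mathcal{V},<)$ is strongly consistent exactly when both $<$ and its reversal $\overline{<}$ are consistent with $\mathcal{V}$, and that the map $\side$ depends only on the graph $CR_n$ and not on the ordering, while the attribute \emph{little}/\emph{big} is defined relative to the ordering: $v$ is little with respect to $<$ precisely when $v < \mirror(v)$.

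First I would observe that reversing the ordering swaps the roles of little and big vertices. Indeed, if $v$ is a big vertex with respect to $<$, that is $\mirror(v) < v$, then in $\overline{<}$ we have $v <_{\overline{<}} \mirror(v)$, so $v$ is a little vertex with respect to $\overline{<}$. Consequently, two distinct big vertices $v$ and $w$ of $G$ with $\side(v)=\side(w)$ are, with respect to $\overline{<}$, two distinct little vertices still lying on the same side, and they belong to the same class of $\mathcal{V}$ if and only if they did originally (the partition is unchanged when the ordering is reversed).

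Then I would apply Corollary~\ref{pitufos-class} to the consistent solution $(\mathcal{V},\overline{<})$: since $v$ and $w$ are little vertices with respect to $\overline{<}$ on the same side, they cannot belong to the same class of $\mathcal{V}$. This is precisely the desired conclusion, so the proof is complete.

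There is essentially no real obstacle here; the only subtlety worth spelling out is that ``little'' is an ordering-dependent notion, so one must explicitly note that replacing $<$ by $\overline{<}$ keeps the partition $\mathcal{V}$ and the sides $\side(\cdot)$ fixed while interchanging little and big vertices, which is exactly what lets Corollary~\ref{pitufos-class} do the work.
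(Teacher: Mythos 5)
Your proof is correct and is essentially identical to the paper's own (one-line) argument: reverse the ordering, observe that big vertices become little vertices while the partition, sides, and consistency are preserved, and invoke Corollary~\ref{pitufos-class}. You have merely spelled out the details that the paper leaves implicit.
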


\begin{proof} Otherwise, the reverse order would contradict Corollary~\ref{pitufos-class}, since big vertices for an ordering are little vertices for its reverse.
\end{proof}

\begin{corollary}
    \label{no-three-side}
    No class contains three vertices of the same side in a strongly consistent solution of $G=CR_n$.
\end{corollary}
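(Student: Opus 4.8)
The plan is to combine Corollary~\ref{pitufos-class} with its dual Corollary~\ref{pitufos-class-st}. Suppose, for contradiction, that in a strongly consistent solution some class $V$ contains three vertices $a, b, c$ all on the same side, say $\side(a)=\side(b)=\side(c)=A_n$ (the case $A'_n$ is symmetric). Since an ordering and its reversal are both consistent, every vertex is little for exactly one of the two orderings (little in $<$ iff big in $\overline{<}$), with the single exception that no vertex equals its own mirror, so the notions of little and big do partition $V(G)$ for a fixed ordering.

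The key step is a pigeonhole argument on the three vertices $a,b,c$: among these three vertices of the same side, either at least two are little (with respect to $<$) or at least two are big (with respect to $<$). If two of them, say $a$ and $b$, are both little and both lie in $V$ with $\side(a)=\side(b)$, this contradicts Corollary~\ref{pitufos-class}. If instead two of them, say $a$ and $b$, are both big, then applying Corollary~\ref{pitufos-class-st} (which is exactly the big-vertex analogue obtained by reversing the order) yields the same contradiction. Either way we reach a contradiction, so no class can contain three vertices of the same side in a strongly consistent solution.

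I do not anticipate a serious obstacle here; the only thing to be careful about is the clean statement of the dichotomy ``little or big'' for a fixed ordering — this is immediate from the definition since $v \neq \mirror(v)$ for every $v$ in $CR_n$ — and the observation that reversing the order swaps the roles of little and big, which is precisely what makes Corollary~\ref{pitufos-class-st} applicable. The proof is therefore a short pigeonhole argument invoking the two previously established corollaries.

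\begin{proof}
Suppose, for the sake of contradiction, that in a strongly consistent solution $(\mathcal{V}, <)$ some class $V \in \mathcal{V}$ contains three distinct vertices $a$, $b$, $c$ with $\side(a) = \side(b) = \side(c)$. Since no vertex of $CR_n$ equals its own mirror, each of $a$, $b$, $c$ is either little or big with respect to $<$. By the pigeonhole principle, two of them, say $a$ and $b$, are both little or both big with respect to $<$. If $a$ and $b$ are both little, then, since $\side(a) = \side(b)$ and $a, b \in V$, this contradicts Corollary~\ref{pitufos-class}. If $a$ and $b$ are both big, then likewise this contradicts Corollary~\ref{pitufos-class-st}. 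In either case we reach a contradiction, which proves that no class contains three vertices of the same side.
\end{proof}
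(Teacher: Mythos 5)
Your proof is correct and uses exactly the same argument as the paper: every vertex is little or big, so by pigeonhole among three same-side vertices two share the same type, contradicting Corollary~\ref{pitufos-class} or Corollary~\ref{pitufos-class-st}. No differences worth noting.
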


\begin{proof} Since every vertex is either little or big, if a class contains three vertices of the same side then either there are at least two little vertices of the same side, contradicting  Corollary~\ref{pitufos-class}, or there are at least two big vertices of the same side, contradicting  Corollary~\ref{pitufos-class-st}.
\end{proof}

\begin{corollary}
    \label{first-vertices}
    Let $(\mathcal{V},<)$ be a strongly consistent solution for $G=CR_n$.  If $v$ and $w$ are two vertices of $G$ such that $\side(v)\neq \side(w)$, $v$ and $w$ belong to the same class of $\mathcal{V}$, and $v < w$, 
    then $v$ must be either the first or second vertex of $\side(v)$. Moreover, if $v$ is the
    second vertex of $\side(v)$, then the first vertex of $\side(v)$ must be $\mirror(w)$.
\end{corollary}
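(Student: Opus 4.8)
The plan is to obtain Corollary~\ref{first-vertices} as the ``mirror image'' of Corollary~\ref{last-vertices}, exploiting the symmetry between an ordering and its reversal that has already been used to derive Corollary~\ref{pitufos-class-st} from Corollary~\ref{pitufos-class}. Recall that if $(\mathcal{V},<)$ is strongly consistent for $G=CR_n$, then by definition both $<$ and its reversal $\overline{<}$ are consistent with $\mathcal{V}$. Moreover, a vertex $u$ is little with respect to $<$ if and only if it is big with respect to $\overline{<}$, and vice versa; in particular the notions of ``first/second vertex of a side'' with respect to $<$ coincide with ``last/penultimate vertex of that side'' with respect to $\overline{<}$.

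First I would set up the translation: suppose $v$ and $w$ satisfy the hypotheses of the statement, i.e.\ $\side(v)\neq\side(w)$, $v$ and $w$ lie in the same class $V\in\mathcal{V}$, and $v<w$. Passing to the reversal $\overline{<}$, we have $w\mathbin{\overline{<}}v$, with $\side(w)\neq\side(v)$ and $v,w\in V$. Since $(\mathcal{V},\overline{<})$ is a consistent solution (because $(\mathcal{V},<)$ is \emph{strongly} consistent), Corollary~\ref{last-vertices} applies to the pair $(w,v)$ in the order $\overline{<}$: it tells us that $v$ must be either the last or the penultimate vertex of $\side(v)$ with respect to $\overline{<}$, and that if $v$ is the penultimate vertex of $\side(v)$ in $\overline{<}$, then the last vertex of $\side(v)$ in $\overline{<}$ is $\mirror(w)$.

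Then I would translate this conclusion back into the order $<$. ``Last vertex of $\side(v)$ according to $\overline{<}$'' is exactly ``first vertex of $\side(v)$ according to $<$'', and ``penultimate vertex according to $\overline{<}$'' is ``second vertex according to $<$''. Hence $v$ is the first or second vertex of $\side(v)$ in $<$, and if $v$ is the second vertex of $\side(v)$ in $<$, then the first vertex of $\side(v)$ in $<$ is $\mirror(w)$, which is precisely the desired conclusion. This is essentially a one-line proof once the reversal dictionary is made explicit, analogous to the proofs of Corollaries~\ref{pitufos-class-st} and~\ref{no-three-side}.

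The only point requiring mild care — and the one I would state explicitly rather than leave implicit — is that Corollary~\ref{last-vertices} is about a \emph{consistent} solution, and we are allowed to invoke it for $\overline{<}$ only because strong consistency of $<$ guarantees consistency of $\overline{<}$; there is no circularity. There is no real obstacle here: the argument is purely a symmetry/duality observation, and the bulk of the work was already done in establishing Theorem~\ref{car:crn:thin} and Corollary~\ref{last-vertices}.
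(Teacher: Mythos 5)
Your proposal is correct and takes exactly the paper's approach: the paper's one-line proof simply applies Corollary~\ref{last-vertices} to the reverse order of $<$, using strong consistency to justify that the reversal is itself consistent. You just spell out the translation dictionary (last $\leftrightarrow$ first, penultimate $\leftrightarrow$ second) more explicitly, which is a faithful elaboration rather than a different argument.
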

\begin{proof} It follows from applying Corollary~\ref{last-vertices} to the reverse order of $<$.
\end{proof}

\begin{claim}
    \label{consec-side}
    Let $(\mathcal{V},<)$ be a strongly consistent solution for $G=CR_n$.  If $n \geq 4$, for every class $V \in \mathcal{V}$ the vertices of each side are consecutive in the order restricted to $V$. 
\end{claim}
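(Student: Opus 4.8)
The plan is to reduce the statement to a single easy configuration. By Corollary~\ref{no-three-side}, in a strongly consistent solution no class contains three vertices of the same side, so each class $V$ has $|V\cap A_n|\le 2$ and $|V\cap A'_n|\le 2$. If $V$ meets a side in at most one vertex, that side is trivially consecutive in $V$. Hence it suffices to prove: whenever $V$ contains two vertices $v,w$ of one side, say $\side(v)=\side(w)=A_n$ with $v<w$, no vertex of $A'_n$ belonging to $V$ lies strictly between $v$ and $w$ in $<$.

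To show this, I would argue by contradiction. Suppose there is $x\in V\cap A'_n$ with $v<x<w$. The pair $(v,x)$ lies in the same class, on different sides, with $v<x$, so Corollary~\ref{last-vertices} forces $x$ to be the last or the penultimate vertex of $\side(x)=A'_n$. The pair $(x,w)$ also lies in the same class, on different sides, with $x<w$, so Corollary~\ref{first-vertices} (here the strong consistency of the solution is used) forces $x$ to be the first or the second vertex of $A'_n$. Since $|A'_n|=n\ge 4$, the set $\{$first, second$\}$ and the set $\{$penultimate, last$\}$ of $A'_n$ are disjoint; thus $x$ cannot satisfy both conditions, a contradiction. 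Therefore no such $x$ exists, and the two vertices of $V$ on side $A_n$ are consecutive in the order restricted to $V$; the same argument with the roles of $A_n$ and $A'_n$ exchanged handles side $A'_n$.

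Finally I would assemble the pieces: within $V$ the (at most two) vertices of $A_n$ occupy consecutive positions of $\sigma_<$ restricted to $V$, and likewise for $A'_n$; since these vertices together exhaust $V$, the two blocks must appear one entirely before the other, so each side of the bipartition is consecutive in the order restricted to $V$, as claimed.

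This proof is short, and there is no real obstacle beyond bookkeeping: the only points requiring care are invoking Corollary~\ref{last-vertices} and Corollary~\ref{first-vertices} with exactly the hypotheses they need (in particular that the solution is strongly consistent, which lets us use Corollary~\ref{first-vertices}), picking the two pairs $(v,x)$ and $(x,w)$ so that the two derived constraints both pin down $x$ inside $A'_n$, and checking that $n\ge4$ is exactly what makes the ``first two'' and ``last two'' positions of a side disjoint.
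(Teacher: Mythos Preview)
Your proof is correct, but it takes a genuinely different route from the paper's. The paper argues in one stroke from Lemma~\ref{prop:neigh:propthin}: assuming $a<b<c$ in some class $V$ with $\side(a)=\side(c)\neq\side(b)$, since $n\ge 4$ there is a vertex $d\in\side(b)$ distinct from $b$, $\mirror(a)$, and $\mirror(c)$; then $d$ is adjacent to $a$ and $c$ but not to $b$, so $N[d]\cap(V\cup\{d\})$ is not consecutive, contradicting strong consistency. No appeal to Corollaries~\ref{no-three-side}, \ref{last-vertices}, or~\ref{first-vertices} is made. Your argument instead layers those structural corollaries: you cap each side of $V$ at two vertices via Corollary~\ref{no-three-side}, and then squeeze the hypothetical interloper $x$ between the ``penultimate/last of $A'_n$'' constraint from Corollary~\ref{last-vertices} and the ``first/second of $A'_n$'' constraint from Corollary~\ref{first-vertices}. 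Both proofs use $n\ge 4$ for the same reason (to guarantee enough room on one side), but the paper's is shorter and self-contained, while yours shows how the earlier corollaries already encode the claim; in particular, the reduction via Corollary~\ref{no-three-side} is convenient framing but not essential, since your contradiction for $v<x<w$ works for any two same-side vertices $v,w$ of $V$.
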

\begin{proof} Suppose, on the contrary, that in some class $V$ there are three vertices $a < b < c$ such that $\side(a) = \side(c) \neq \side(b)$. Since $n \geq 4$, there is $d \in V(G)$ such that $\side(d) = \side(b)$, and $d \not \in \{b, \mirror(a), \mirror(c)\}$. Then $\{a,c\} \subseteq N[d]$ and $b \not \in N[d]$, which contradicts Lemma~\ref{prop:neigh:propthin}.
\end{proof}

\begin{corollary}
    \label{no-four}
    No class contains four vertices in a strongly consistent solution of $G=CR_n$, $n \geq 4$.
\end{corollary}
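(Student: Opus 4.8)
The plan is to derive a contradiction from assuming that some class $V\in\mathcal{V}$ contains four vertices, using only the structural facts already established for strongly consistent solutions of $CR_n$. First, since no class may contain three vertices of the same side (Corollary~\ref{no-three-side}), such a $V$ must contain exactly two vertices of each side of the bipartition $\{A_n,A'_n\}$. Second, because $n\ge 4$, Claim~\ref{consec-side} applies, so the two vertices of each side occur consecutively within the restriction of $<$ to $V$; hence, after possibly swapping the names of the two sides, I may write $V=\{v,w,x,y\}$ with $v<w<x<y$, where $\side(v)=\side(w)$ and $\side(x)=\side(y)$.

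Next I would invoke Corollary~\ref{last-vertices} on the pairs $(v,x)$ and $(w,x)$: in each case the two vertices lie in the same class, on different sides, with the smaller one preceding the larger one, so the corollary gives that $x$ is the last or the penultimate vertex of $\side(x)$. Since $y\in\side(x)$ and $x<y$, the vertex $x$ is not the last vertex of $\side(x)$, so it must be the penultimate one, which forces $y=\last(\side(x))$. Now the ``moreover'' clause of Corollary~\ref{last-vertices} applied to $(v,x)$ yields $\last(\side(x))=\mirror(v)$, and applied to $(w,x)$ yields $\last(\side(x))=\mirror(w)$; therefore $\mirror(v)=\mirror(w)$, i.e. $v=w$, contradicting that $v,w,x,y$ are four distinct vertices.

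I do not expect a serious obstacle here: the corollary is essentially a repackaging of Corollary~\ref{no-three-side} and Corollary~\ref{last-vertices} together with Claim~\ref{consec-side}. The only points that need care are the reduction to the canonical configuration $v<w<x<y$ (which genuinely uses both the ``no three vertices on one side'' fact and the ``each side consecutive'' fact, and relies on $n\ge 4$), and choosing the right pairs when applying Corollary~\ref{last-vertices} so that its second clause pins down $\last(\side(x))$ in two different ways and produces the collision $\mirror(v)=\mirror(w)$. Symmetrically, one could instead apply Corollary~\ref{first-vertices} to the pairs $(w,x)$ and $(w,y)$ to obtain $\mirror(x)=\mirror(y)$ and conclude $x=y$.
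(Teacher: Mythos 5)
Your proof is correct and follows essentially the same route as the paper's: reduce to exactly two vertices per side via Corollary~\ref{no-three-side}, force the consecutive arrangement $v<w<x<y$ via Claim~\ref{consec-side}, and then apply Corollary~\ref{last-vertices} to the two cross-side pairs $(v,x)$ and $(w,x)$ to pin down the last vertex of $\side(x)$ as both $\mirror(v)$ and $\mirror(w)$, a contradiction. The only difference is cosmetic: you spell out explicitly why $x$ must be penultimate rather than last and that $y=\last(\side(x))$, whereas the paper leaves this implicit, and the paper phrases the final collision as $d=\mirror(a)=\mirror(b)$ instead of $\mirror(v)=\mirror(w)$, which is the same step.
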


\begin{proof} 
Let $(\mathcal{V},<)$ be a strongly consistent solution for $G=CR_n$, where $n \geq 4$. Suppose a class $V \in \mathcal{V}$ contains at least four vertices. By Corollary~\ref{no-three-side}, it contains exactly four vertices, two of each side. Moreover, by 
Claim~\ref{consec-side}, the class must be composed by four distinct vertices $a < b < c < d$ such that $\side(a)=\side(b)$, $\side(c)=\side(d)$, and $\side(a)\neq \side(c)$.

Applying Corollary~\ref{last-vertices} to $a < c$ and $b < c$, it follows that $c$ must be the penultimate vertex of $\side(c)$, thus $d = \mirror(a)$ and $d=\mirror(b)$, a contradiction. 
\end{proof}

\begin{claim}\label{cr-at-least-st}
In a strongly consistent solution for $G=CR_n$, the number of classes containing a big vertex is at least $n-1$.
In other words, at most one class can contain two big vertices, one of each side, since by Corollary~\ref{pitufos-class-st} no class can contain more than one big vertex of the same side.
\end{claim}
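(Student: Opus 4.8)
The plan is to deduce this claim from its ``consistent'' counterpart, Claim~\ref{cr-at-least}, by passing to the reverse order, in exactly the same way that Corollary~\ref{pitufos-class-st} was obtained from Corollary~\ref{pitufos-class}. First I would observe that if $(\mathcal{V}, <)$ is strongly consistent for $G = CR_n$, then by definition its reversal $\overline{<}$ is also consistent with $\mathcal{V}$, so $(\mathcal{V}, \overline{<})$ is a consistent solution to which Claim~\ref{cr-at-least} applies. Next I would note that the notions of little and big vertex are interchanged under reversal: a vertex $v$ satisfies $v > \mirror(v)$ in $<$ if and only if $v < \mirror(v)$ in $\overline{<}$. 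Hence the big vertices with respect to $<$ are precisely the little vertices with respect to $\overline{<}$, and in particular a class of $\mathcal{V}$ contains a big vertex for $<$ if and only if it contains a little vertex for $\overline{<}$.

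Combining these two observations, applying Claim~\ref{cr-at-least} to the consistent solution $(\mathcal{V}, \overline{<})$ gives that at least $n-1$ classes of $\mathcal{V}$ contain a little vertex for $\overline{<}$, and these are exactly the classes containing a big vertex for $<$, which is the desired bound. The ``in other words'' reformulation in the statement follows in the same spirit as in Claim~\ref{cr-at-least}: each side $A_n$, $A'_n$ contains exactly $n$ big vertices (one per mirror pair), Corollary~\ref{pitufos-class-st} forbids two big vertices of the same side from sharing a class, and the lower bound $n-1$ then forces that at most one class can hold two big vertices, necessarily one from each side.

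There is essentially no obstacle here, since the argument is a pure symmetry reduction; the only point to be careful about is that reversal sends strongly consistent solutions to (strongly) consistent solutions, which is immediate from the definition, so that Claim~\ref{cr-at-least} is legitimately applicable to $\overline{<}$ with the same partition $\mathcal{V}$.
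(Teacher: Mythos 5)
Your proposal is correct and is exactly the argument the paper uses: the paper's proof reads ``It follows from Claim~\ref{cr-at-least} applied to the reverse order of the solution,'' and your write-up simply spells out the two supporting observations (reversal of a strongly consistent order is consistent, and reversal swaps little and big vertices) that make that application legitimate.
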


\begin{proof}
It follows from Claim~\ref{cr-at-least} applied to the reverse order of the solution.
\end{proof}

\begin{claim}
    \label{no-two-three}
    There are at most two classes of three vertices in a strongly consistent solution of $G=CR_n$, $n \geq 4$. Moreover, if there are two such classes, then one of them has vertices $(a,b,c)$, where $a$ and $b$ are the first two vertices of their side and $c$ belongs to the opposite side, and the other one has vertices $(x,y,z)$, where $y$ and $z$ are the last two vertices of the side of $a$ and $x$ belongs to the side of $c$. In particular,   
    they contain four vertices of one side and two of the other side, and the four vertices of the same side are the first two and the last two of the side in the order.
\end{claim}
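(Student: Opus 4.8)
The plan is to classify the three-vertex classes, bound how many there can be, and then read off their structure. By Corollary~\ref{no-four} every class has at most three vertices, so let $C=\{p,q,r\}$ with $p<q<r$ be a class with exactly three vertices. By Corollary~\ref{no-three-side} the three vertices are not all on one side, and by Claim~\ref{consec-side} the two vertices lying on a common side are consecutive in $<$ restricted to $C$; hence either $\side(p)=\side(q)\ne\side(r)$ (\emph{type~I}) or $\side(q)=\side(r)\ne\side(p)$ (\emph{type~II}).

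First I would pin down the shape of each type using Corollaries~\ref{first-vertices} and~\ref{last-vertices}. For type~I with $S=\side(p)$: applying Corollary~\ref{first-vertices} to the pairs $(p,r)$ and $(q,r)$ (each lies in $C$, crosses the bipartition, and is increasing) forces $q$ to be the second vertex of $S$, then $p=\first(S)$, and then $p=\mirror(r)$; applying Corollary~\ref{last-vertices} to the same two pairs forces $r=\last(\overline S)$, since otherwise $r$ would be the penultimate vertex of $\overline S$ and we would get $\mirror(p)=\mirror(q)$, i.e.\ $p=q$. As $p<q<r=\mirror(p)$, vertex $p$ is little, so $q$ is big by Corollary~\ref{pitufos-class} and $r$ is big as well; hence a type~I class contains two big vertices. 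Symmetrically, applying Corollary~\ref{last-vertices} to $(p,q)$ and $(p,r)$, a type~II class with $S=\side(r)$ satisfies $r=\last(S)$, $q$ is the penultimate vertex of $S$, and $p=\mirror(r)\in\overline S$; here $p<\mirror(p)=r$ so $p$ is little, $r$ is big, and $q$ is little by Corollary~\ref{pitufos-class-st}, so a type~II class contains two little vertices.

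Then I would bound the count: by Claim~\ref{cr-at-least-st} at most one class has two big vertices and by Claim~\ref{cr-at-least} at most one class has two little vertices, so there is at most one type~I class and at most one type~II class, whence at most two classes of three vertices. If there are exactly two, then one is of type~I, say $(a,b,c)$ with $a=\first(S_1)$, $b$ the second vertex of $S_1$, and $c=\last(\overline{S_1})=\mirror(a)$, and the other is of type~II, say $(x,y,z)$ with $z=\last(S_2)$, $y$ the penultimate vertex of $S_2$, and $x=\mirror(z)\in\overline{S_2}$. To conclude I would show $S_1=S_2$: if not, then $\{S_1,S_2\}=\{A_n,A'_n\}$, so $\overline{S_1}=S_2$ and therefore $c=\last(\overline{S_1})=\last(S_2)=z$, contradicting that $c$ and $z$ belong to distinct classes. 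With $S_1=S_2=:S$ we obtain that $a,b,y,z$ all lie on $S$ and are exactly its first two and last two vertices, while $c$ and $x$ lie on $\overline S$; this is the asserted structure. (The hypothesis $n\ge4$, already invoked through Claim~\ref{consec-side}, also guarantees that these four vertices of $S$ are distinct.)

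I expect the conceptual core to be short: the key realization is that type~I classes carry two big vertices and type~II classes carry two little ones, which is precisely what Claims~\ref{cr-at-least} and~\ref{cr-at-least-st} limit, and that the shared vertex $\last(\overline{S_1})=\last(S_2)$ rules out the two surviving classes sitting on opposite sides. The main obstacle should be bookkeeping rather than depth: carefully extracting the ``moreover'' clauses of Corollaries~\ref{first-vertices} and~\ref{last-vertices} in every sub-case so as to certify that the named vertices really occupy the first/second or penultimate/last positions and that $p=\mirror(r)$.
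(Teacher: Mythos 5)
Your proof is correct, and it takes a cleaner, more modular route than the paper's. The paper first fixes one three-vertex class $V$ of your ``type~I'' form (wlog), then runs a four-way case analysis on the other class $W$ (two vertices on $\side(c)$ or on $\side(a)$, and within each, whether the lone-side vertex is first or last in $W$), ruling out three of the four cases via disjointness with $\{a,b\}$ or via Claim~\ref{cr-at-least-st}. You instead derive the complete structure of \emph{every} three-vertex class in isolation — type~I classes are $(\first(S),\,\text{second of }S,\,\last(\overline S))$ carrying two big vertices, type~II classes are $(\mirror(\last(S)),\,\text{penultimate of }S,\,\last(S))$ carrying two little vertices — and only then invoke Claims~\ref{cr-at-least} and~\ref{cr-at-least-st} to cap each type at one. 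Your closing observation that placing the two surviving classes on opposite sides would force $c=\last(\overline{S_1})=\last(S_2)=z$, contradicting disjointness, replaces the paper's case split and is the genuinely novel simplification. Both proofs rely on the same toolkit (Corollaries~\ref{first-vertices}, \ref{last-vertices}, \ref{pitufos-class}, \ref{pitufos-class-st}, Claims~\ref{consec-side}, \ref{cr-at-least}, \ref{cr-at-least-st}), so the saving is in organization rather than in new lemmas; but your ``shared last vertex'' argument is a nicer way to pin the two classes to the same side.
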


\begin{proof} 
Let $(\mathcal{V},<)$ be a strongly consistent solution for $G=CR_n$, where $n \geq 4$. Suppose a class $V \in \mathcal{V}$ contains three vertices. Then it contains either two little vertices or two big vertices. By Claims~\ref{cr-at-least} and~\ref{cr-at-least-st}, this may happen only once for little vertices and once for big vertices, so there are at most two classes of three vertices. 
Suppose there are indeed two classes $V, W$ of three vertices each. By Corollary~\ref{no-three-side} and Claim~\ref{consec-side}, we may assume that $V = \{a,b,c\}$ where $a < b < c$, $\side(a)=\side(b)$ and $\side(b)\neq \side(c)$. By Corollary~\ref{first-vertices} applied to $b < c$, it follows that $a$ and $b$ are the first and second vertices of $\side(a)$, and $a = \mirror(c)$. In particular, $a$ is little and $c$ is big. By Corollary~\ref{pitufos-class}, $b$ has to be big too.    
Suppose $W = \{x,y,z\}$, $x < y < z$, has two vertices of $\side(c)$ and one of $\side(a)$. By Corollary~\ref{no-three-side} and Claim~\ref{consec-side}, there are two cases: either $\side(x)=\side(y)=\side(c)$, and $\side(z)=\side(a)$, or $\side(y)=\side(z)=\side(c)$, and $\side(x)=\side(a)$. In the first case, reasoning as above, $x$ and $y$ are the first and second vertices of $\side(c)$, $x = \mirror(z)$, $x$ is little and $y, z$ are big. This is a contradiction to Claim~\ref{cr-at-least-st}. In the second case, by Corollary~\ref{first-vertices} applied to $x < y$, it follows that $x$ is either the first or the second vertex of $\side(a)$, but $x \not \in \{a,b\}$, a contradiction. 
Thus, suppose $W = \{x,y,z\}$, $x < y < z$, has two vertices of $\side(a)$ and one of $\side(c)$. By Corollary~\ref{no-three-side} and Claim~\ref{consec-side}, there are two cases: either $\side(x)=\side(y)=\side(a)$, and $\side(z)=\side(c)$, or $\side(y)=\side(z)=\side(a)$, and $\side(x)=\side(c)$. In the first case, reasoning as above, $x$ and $y$ are the first and second vertices of $\side(a)$, a contradiction because $x \neq a$. So the second case holds, and $y$ and $z$ are the penultimate and last vertices of $\side(a)$. 
\end{proof}

\subsection{Proofs of the results in Table~\ref{table1}}

\begin{theorem}\label{thin-crn}
For $n \geq 1$, $\thin(CR_n) = \max(1,n-1)$.
For $n$ odd, $n \neq 3$, $\pthin(CR_n) = n$; for $n=3$ or $n \geq 1$ even, $\pthin(CR_n) = n-1$. 
\end{theorem}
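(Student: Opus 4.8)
The plan is to establish matching lower and upper bounds for each of the two parameters $\thin(CR_n)$ and $\pthin(CR_n)$, handling the small cases $n \le 3$ separately (these follow by inspection: $CR_1 = K_2$, $CR_2 = C_4$, both proper interval; $CR_3 = C_6$, which is $2$-thin but not interval). So assume $n \ge 4$ from now on.

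\emph{Lower bound for $\thin$.} For any consistent solution of $CR_n$, Claim~\ref{cr-at-least} gives at least $n-1$ classes containing a little vertex; since every class used is in particular nonempty, $\thin(CR_n) \ge n-1$. This is the crux of the lower bound and it is already done.

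\emph{Upper bound for $\thin$.} Here I would exhibit an explicit ordering and $(n-1)$-partition satisfying Conditions~\ref{cond1:crn:thin} and~\ref{cond2:crn:thin} (so that Theorem~\ref{car:crn:thin} applies). The natural candidate: put one class $\{v_1, v'_1, v_n, v'_n\}$ (four vertices, two per side — allowed for plain consistency even though Corollary~\ref{no-four} forbids it for \emph{strong} consistency), and pair up the remaining $2(n-2)$ vertices as $\{v_i, v'_{i}\}$-type singleton-pair classes, for a total of $1 + (n-2) = n-1$ classes; then choose the order so that each little vertex is first in its class-side (Condition~\ref{cond1:crn:thin}) and no class of mixed sides has a "bad" third same-side vertex after it (Condition~\ref{cond2:crn:thin}). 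I would write the order as $v_1 < v'_1 < (\text{the paired middle vertices, little one first in each pair}) < v_n < v'_n$ or a similar arrangement, and verify the two conditions directly; this is the routine-but-must-be-checked step.

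\emph{The $\pthin$ bounds, which is the main obstacle.} For strong consistency the four-vertex class is no longer available (Corollary~\ref{no-four}), and Claims~\ref{cr-at-least}, \ref{cr-at-least-st}, \ref{no-two-three} constrain how classes of three vertices can look. The argument splits by parity of $n$:
\begin{itemize}
\item \emph{$n$ even.} For the upper bound $\pthin(CR_n) \le n-1$, I would give an explicit strongly consistent $(n-1)$-partition using exactly two classes of three vertices (of the special shape described in Claim~\ref{no-two-three}) and $n-3$ classes of two vertices, then check that the reversal is also consistent via Lemma~\ref{prop:neigh:propthin}. For the lower bound, $\pthin \ge \thin = n-1$ suffices.
\item \emph{$n$ odd, $n \ne 3$.} Here the upper bound $\pthin(CR_n) \le n$ is easy (take the order from the even-type construction but without being able to merge one pair, or simply the trivial-ish $n$-partition). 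The hard part is the lower bound $\pthin(CR_n) \ge n$: I must rule out every strongly consistent $(n-1)$-partition. The idea is a counting/parity argument: an $(n-1)$-partition of $2n$ vertices with all classes of size $\le 3$ (Corollary~\ref{no-four}), at most two of size exactly three (Claim~\ref{no-two-three}), must have class-size multiset forcing exactly two triples and $n-3$ pairs; but Claim~\ref{no-two-three} pins down that those two triples together occupy four vertices of one side (the first two and last two of that side) and two of the other side — and then a parity obstruction arises because $n$ odd makes the remaining $n-2$ vertices on that side odd in number, so they cannot all be matched into cross-side pairs compatible with Corollary~\ref{last-vertices}/\ref{first-vertices}. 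Making this last parity/matching contradiction precise is the step I expect to require the most care.
\end{itemize}

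Finally I would collect the cases: $\thin(CR_n) = \max(1, n-1)$ for all $n \ge 1$; $\pthin(CR_n) = n-1$ for $n = 3$ and for even $n \ge 1$, and $\pthin(CR_n) = n$ for odd $n \ge 5$ — matching the table.
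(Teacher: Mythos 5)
Your lower-bound arguments match the paper's: $\thin(CR_n)\geq n-1$ from Claim~\ref{cr-at-least}, and the parity argument for $\pthin(CR_n)\geq n$ when $n$ is odd (two size-three classes occupy the first two and last two vertices of one side, Corollaries~\ref{last-vertices} and~\ref{first-vertices} then forbid any further cross-side pair, and $n-2$ and $n-4$ odd makes same-side pairing impossible). Note in passing that $CR_1$ is the edgeless graph on two vertices (not $K_2$) and $CR_2$ is $2K_2$ (not $C_4$); both are still proper interval so the conclusions for small $n$ stand.

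The genuine gap is in your proposed upper-bound construction for $\thin(CR_n)$. You suggest one class $\{v_1,v'_1,v_n,v'_n\}$ plus the $n-2$ mirror-pair classes $\{v_i,v'_i\}$. But Condition~\ref{cond2:crn:thin} applied to a class $\{v_i,v'_i\}$ (take $i=j$) forces $v'_i$ to be the last vertex of $A'_n$ if $v_i<v'_i$, and $v_i$ to be the last vertex of $A_n$ if $v'_i<v_i$. There is only one ``last'' slot per side, so at most two mirror-pair classes can coexist in any consistent solution, while your plan needs $n-2>2$ of them for $n\geq 5$. This is not a routine verification step that will go through with care; the construction must be changed so that almost all size-two classes are \emph{same-side} pairs $\{v_i,v_{j}\}$ or $\{v'_i,v'_{j}\}$ (which never trigger Condition~\ref{cond2:crn:thin}), as the paper does in Algorithms~\ref{even-crn}--\ref{odd-crn-st}. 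A related point: the paper's construction for odd $n\geq 5$ also avoids any size-four class and instead uses two size-three classes, because once the four-vertex class forces the last three vertices of the order to be $v'_n,v_n,v'_1$, the leftover $n-2$ vertices per side are odd in number and cannot all be partitioned into same-side pairs; you would still need an extra mixed pair, running into the same Condition-\ref{cond2:crn:thin} obstruction. So the overall skeleton is right, but the explicit partitions need to be redone along the lines of the paper's algorithms.
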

\begin{proof}
The thinness of a non-empty graph is always at least~1, and by Claim~\ref{cr-at-least}, $\thin(CR_n) \geq n-1$.

Suppose $n\geq 5$ is odd, and suppose $(\mathcal{V},<)$ is a strongly consistent solution for $G=CR_n$, with $n-1$ classes. By Corollary~\ref{no-four} and Claim~\ref{no-two-three}, the solution has to have two classes of three vertices each, and $n-3$ classes of two vertices each. Moreover, since for one of the sides, the first two and the last two vertices belong to these classes of size three, by Corollary~\ref{last-vertices} and Corollary~\ref{first-vertices}, no other class can contain elements of both sides. 
But since $n$ is odd, both $n-4$ and $n-2$ are odd, so there is no way to partition the remaining $2n-6$ vertices into classes of size $2$ where no class can contain elements of both sides. Hence, for $n \geq 5$ odd, $\pthin(CR_n) \geq n$.  

We will conclude the proof by construction. We will first show strongly partitions and orders for $n \leq 4$. 

For $n=1$, let $\sigma = (v_1,v'_1)$ and $\mathcal{V} = \{\{v_1,v'_1\}\}$.

For $n=2$, let $\sigma = (v_1,v'_2,v_2,v'_1)$ and $\mathcal{V} = \{\{v_1,v'_2,v_2,v'_1\}\}$.

For $n=3$, let $\sigma = (v_1,v'_2,v'_3,v_3,v_2,v'_1)$ and $\mathcal{V} = \{\{v_1,v'_3,v_2\},\{v'_2,v_3,v'_1\}\}$.

For $n=4$, let $\sigma = (v_1,v'_3,v_4,v'_4,v'_2,v_2,v_3,v'_1)$ and $\mathcal{V} = \{\{v_1, v'_2, v'_1\},$ $\{v'_3, v'_4, v_3\},$ $\{v_4, v_2\}\}$.

In all the cases, it is not hard to check that Condition~\ref{cond1:crn:thin} and Condition~\ref{cond2:crn:thin} are satisfied for both $(\mathcal{V}, \sigma)$ and $(\mathcal{V}, \overline{\sigma})$. 

For $n\geq 6$ even, we build a strongly consistent solution $(\mathcal{V},\sigma)$  with $n-1$ classes for $CR_n$, from the solution for $n=4$ by inserting the remaining vertices into the order and distributing them into $n-4$ classes of two vertices each. The construction is shown in Algorithm~\ref{even-crn}.

For $n \geq 5$ odd, we can build a strongly consistent solution $(\mathcal{V},\sigma)$  with $n$ classes for $CR_n$ following Algorithm~\ref{odd-crn-st}, and a consistent solution $(\mathcal{V},\sigma)$  with $n-1$ classes for $CR_n$ following Algorithm~\ref{odd-crn}. 

In each case, it is not hard to check that Condition~\ref{cond1:crn:thin} and Condition~\ref{cond2:crn:thin} hold either for  $(\mathcal{V}, \sigma)$ or for both 
 $(\mathcal{V}, \sigma)$ and $(\mathcal{V}, \overline{\sigma})$, as required. \end{proof}

\begin{algorithm}
\caption{Strongly consistent layout of $CR_n$ for even $n \geq 6$.}
\label{even-crn}
\begin{algorithmic}
    \Function{StConsistentLayoutCR}{%
    $v_1, v_2, \dots v_n$: vertices in $A_n$,
    $v'_1, v'_2, \dots v'_n$: vertices in $A'_n$
}
\State $C_{1} \gets \{v_1, v'_2, v'_1\}$
\State $C_{2} \gets \{v'_3, v'_4, v_3\}$
\State $C_{3} \gets \{v_4, v_2\}$
\State $C_{4} \gets \{v_5, v_{n}\}$
\State $C_{5} \gets \{v'_{6}, v'_{5}\}$

\State \( \sigma \gets (v'_6, v'_5) \) 
 
\For{odd $i \in [7..n-1]$} \\
    \Comment{in each iteration we set the order and classes of 4 vertices}
    \State $\sigma \gets \sigma \oplus (v_i, v_{i-1}, v'_{i+1}, v'_i)$ 

    \State $C_{i-1} \gets \{v_i, v_{i-1}\}$
    \State $C_{i} \gets \{v'_{i+1}, v'_{i}\}$
\EndFor
\State \( \sigma \gets (v_1,v'_3,v_4,v'_4,v_5) \oplus \sigma \oplus (v_n,v'_2,v_2,v_3,v'_1)\) \\
\Comment{we set the order of the remaining vertices}
\State \( \mathcal{V} \gets \{ C_1,\dots,C_{n-1} \}\) \\
\Return $\sigma, \mathcal{V}$
\EndFunction

\end{algorithmic}
\end{algorithm}

\begin{algorithm}
\caption{Consistent layout of $CR_n$  for odd $n > 1$.}
\label{odd-crn}
\begin{algorithmic}
    \Function{ConsistentLayoutCR}{%
    $v_1, v_2, \dots v_n$: vertices in $A_n$,
    $v'_1, v'_2, \dots v'_n$: vertices in $A'_n$
}

\State \( \sigma \gets \emptyset \) \Comment{the ordering starts out empty}
\For{odd $i \in [1..n-2)$} \\
    \Comment{in each iteration we set the order and classes of 4 vertices}
    \State $\sigma \gets \sigma \oplus (v_i, v'_{i+1}, v_{i+1}, v'_i)$ 

    \State $C_{i} \gets \{v_i, v_{i+1}\}$
    \State $C_{i+1} \gets \{v'_{i+1}, v'_{i}\}$
\EndFor
\State \( \sigma \gets (v_{n}) \oplus \sigma \oplus (v'_{n}) \)
\Comment{we set the order and class of the remaining vertices}
\State \( C_{n-1} \gets C_{n-1} \cup \{v_n\} \)
\State \( C_{n-2} \gets C_{n-2} \cup \{v'_n\} \)
\State \( \mathcal{V} \gets \{ C_1,...,C_{n-1} \}\)\\
\Return $\sigma, \mathcal{V}$
\EndFunction

\end{algorithmic}
\end{algorithm}

\begin{algorithm}
\caption{Strongly consistent layout of $CR_n$  for odd $n > 4$.}
\label{odd-crn-st}
\begin{algorithmic}
    \Function{StConsistentLayoutCR}{%
    $v_1, v_2, \dots v_n$: vertices in $A_n$,
    $v'_1, v'_2, \dots v'_n$: vertices in $A'_n$
}

\State \( \sigma \gets \emptyset \) \Comment{the ordering starts out empty}
\For{even $i \in [2..n)$} \\
    \Comment{in each iteration we set the order and classes of 4 vertices}
    \State $\sigma \gets \sigma \oplus (v'_i, v'_{i-1}, v_{i+1}, v_i)$ 

    \State $C_{i} \gets \{v'_i, v'_{i-1}\}$
    \State $C_{i+1} \gets \{v_{i+1}, v_i\}$
\EndFor
\State \( \sigma \gets (v_{1}) \oplus \sigma \oplus (v'_{n}) \)
\Comment{we set the order and class of the remaining vertices}
\State $C_{1} \gets \{v_1, v'_{n}\}$
\State \( \mathcal{V} \gets \{ C_1,...,C_{n} \}\)\\
\Return $\sigma, \mathcal{V}$
\EndFunction

\end{algorithmic}
\end{algorithm}





Next, we deal with the independent and complete versions of thinness in crowns. 

\begin{theorem}\label{thm:crn:indthin}
For every $n \geq 1$, $\indthin(CR_n) =  \indpthin(CR_n) = n$.
\end{theorem}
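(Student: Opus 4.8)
The plan is to establish the equality $\indthin(CR_n) = \indpthin(CR_n) = n$ by proving the lower bound $\indpthin(CR_n) \geq \indthin(CR_n) \geq n$ (the first inequality is trivial since every independent proper $k$-thin solution is in particular an independent $k$-thin solution) and then exhibiting an explicit strongly consistent solution with $n$ independent classes, which simultaneously certifies both upper bounds.

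For the lower bound, I would argue as follows. Suppose $(\mathcal{V}, <)$ is a consistent solution for $G = CR_n$ in which every class of $\mathcal{V}$ is an independent set. Recall from the preliminaries that $\{A_n, A'_n\}$ is the bipartition, and that for each vertex $v_i$ the unique non-neighbour on the other side is $v'_i = \mirror(v_i)$. The key observation is that an independent class can contain at most one vertex from each side (two vertices on the same side are always adjacent in $CR_n$, since $CR_n$ is obtained from $K_{n,n}$ by deleting only a matching, so both sides induce cliques... wait, no: both sides are independent in a bipartite graph). Let me correct: in $CR_n$, each side $A_n$ and $A'_n$ is an independent set, so an independent class may contain several vertices of the same side; the constraint comes instead from Corollary~\ref{pitufos-class} and its consequences. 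I would combine Claim~\ref{cr-at-least}, which guarantees at least $n-1$ classes containing a little vertex, with a parity/independence refinement: the single class that is allowed to contain two little vertices (one from each side, say $v$ and $x$ with $v \in A_n$, $x \in A'_n$) would need $v$ and $x$ to be non-adjacent, i.e. $x = \mirror(v)$; but then by Condition~\ref{cond1:crn:thin} both $v$ and $x$ are first in their side within that class, and one checks this forces a contradiction with the structure forced by Corollary~\ref{last-vertices}, so in fact \emph{every} class contains at most one little vertex. Since there are $n$ little vertices on each... no — there are exactly $n$ little vertices total (one of each mirror pair $\{v_i, v'_i\}$ is little). Hmm, so that gives only $\geq$ something like $\lceil n/?\rceil$. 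The cleanest route is: each class being independent cannot contain both $v_i$ and $v'_i$ is false (they are non-adjacent, so it \emph{can}); rather, the obstruction is that each class, being independent, together with the consistency conditions, can "serve" only a bounded number of the mirror-pair non-edges, and a counting argument over the $n$ non-edges of the deleted matching yields $n$ classes. I would make this precise by showing each class contains at most one vertex that is little, hence at least $\lceil (\text{number of little vertices})\rceil$... I need to recount which vertices are little, and the honest answer is that the main obstacle is pinning down exactly this counting.

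For the upper bound, I would give an explicit construction. A natural candidate: order the vertices as $v_1 < v_1' < v_2 < v_2' < \cdots < v_n < v_n'$ (interleaving mirror pairs), or some small perturbation thereof, and put class $V_i = \{v_i, v_i'\}$ for each $i$. Each $V_i$ is an independent set since $(v_i, v_i') \notin E(G)$. One then checks Conditions~\ref{cond1:crn:thin} and~\ref{cond2:crn:thin} directly — Condition~\ref{cond1:crn:thin} holds because each little vertex is alone on its side within its class, and Condition~\ref{cond2:crn:thin} needs checking that no "bad" third vertex $v_z'$ or $v_z$ intrudes, which I expect to require choosing the interleaving order carefully (perhaps $v_1 < v_2 < \cdots < v_n < v_n' < v_{n-1}' < \cdots < v_1'$, a "palindromic" order, so that within each class $V_i = \{v_i, v_i'\}$ the vertex $v_i$ is little and $v_i'$ is big and the between-class interference is controlled). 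To get the \emph{proper} (strongly consistent) bound simultaneously, I would verify that the chosen order and its reversal both satisfy the two Conditions, using Lemma~\ref{prop:neigh:propthin} as an alternative check: for each $v$ and each class $V_i = \{v_i, v_i'\}$, the set $N[v] \cap (V_i \cup \{v\})$ must be consecutive in $V_i \cup \{v\}$ — since $|V_i| = 2$ this is almost automatic and fails only in easily avoidable configurations.

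The main obstacle I anticipate is the lower bound's exact counting: making rigorous why $n-1$ classes (which Claim~\ref{cr-at-least} already nearly forces in the general setting) cannot suffice once independence is imposed, i.e. ruling out the single "doubled" little-vertex class in the independent setting, or equivalently showing that the extra independence constraint kills precisely the one-class slack and bumps $n-1$ up to $n$. I would handle this by a short case analysis on that putative shared class $\{v, x\}$ with $x = \mirror(v)$, showing via Corollary~\ref{last-vertices} (applied on the side of $x$) that its presence would force two vertices of the other side to coincide, exactly as in the final paragraph of the proof of Claim~\ref{cr-at-least}.
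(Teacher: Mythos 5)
Your lower-bound plan eventually points at the right target but gets there by a tangled and partially incorrect route, and your upper-bound construction cannot be repaired by any choice of ordering. Take them in turn.

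For the lower bound, you want to show that every class contains at most one little vertex, so that the $n$ little vertices force $|\mathcal{V}|\ge n$. You correctly observe that an independent class meeting both sides must be a mirror pair $\{v,\mirror(v)\}$, but you then try to rule out both members being little via Corollary~\ref{last-vertices} ``as in the final paragraph of the proof of Claim~\ref{cr-at-least}.'' That step does not close: the contradiction in Claim~\ref{cr-at-least} requires two such classes to collide, whereas here you have only one. The fix is much simpler and purely definitional: $v$ is little iff $v<\mirror(v)$, and $\mirror(v)$ is little iff $\mirror(v)<v$, so a mirror pair never contains two little vertices. Alternatively (and this is what the paper actually does), once you know each independent class $V$ is either a mirror pair or a subset of one side, Condition~\ref{cond1:crn:thin} directly forces every little vertex to be $\first(V)$ of its own class, so the map ``little vertex $\mapsto$ its class'' is injective and $|\mathcal{V}|\ge n$ follows immediately, with no appeal to Claim~\ref{cr-at-least}, Corollary~\ref{pitufos-class}, or Corollary~\ref{last-vertices}.

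For the upper bound, the proposal to take $V_i=\{v_i,v'_i\}$ for all $i$ fails for \emph{every} ordering once $n\ge 3$, not merely for the two orderings you sketch. If $V_i=\{v_i,v'_i\}$ is a class and $v_i<v'_i$, then Condition~\ref{cond2:crn:thin} with $j=i$ forbids any $v'_z\in A'_n$ with $z\ne i$ and $v_i<v'_i<v'_z$; since $v_i<v'_i$ automatically, this forces $v'_i=\last(A'_n)$. Symmetrically, if $v'_i<v_i$ it forces $v_i=\last(A_n)$. There is only one $\last$ per side, so at most two of the $n$ mirror-pair classes can ever satisfy Condition~\ref{cond2:crn:thin}, and for $n\ge 3$ some class must break consistency. (Concretely, with your interleaved order $v_1<v'_1<v_2<v'_2<\cdots$ the triple $(v_1,v'_1,v'_2)$ already breaks consistency, since $v_1,v'_1$ share a class, $(v_1,v'_2)\in E$ and $(v'_1,v'_2)\notin E$; the palindromic order fails similarly.) The paper's construction sidesteps this entirely by taking almost all classes to be one-sided pairs $\{v_i,v_{i-1}\}\subseteq A_n$ or $\{v'_i,v'_{i-1}\}\subseteq A'_n$, adding at most one mirror pair (placed at the two extreme ends of the ordering, when $n$ is odd), and verifying strong consistency by induction in steps of two.
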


\begin{proof}
Let $G =CR_n$. We prove that $\indthin(G) \geq n$ and $\indpthin(G) \leq n$.

Let $<$ and $\mathcal{V}$ be an ordering and a partition of $V(G)$ into independent sets, respectively, that are consistent. Let $V \in \mathcal{V}$, and $v \in V$. Since $V$ is an independent set of $G$, either $V=\{v,\mirror(v)\}$ or $V \subseteq \side(v)$. Thus, by  Condition~\ref{cond1:crn:thin}, if $v$ is a little vertex then $v = \first(V)$. This implies that for each pair $\{v_i,v'_i\}$ there is a class $V_i \in \mathcal{V}$ in which either $v_i$ or $v'_i$ is the first vertex of $V_i$. In particular, there is at least one class in $\mathcal{V}$ for each pair of vertices $\{v_i,v'_i\}$ in $G$, in other words, $|\mathcal{V}| \geq n$.

Now we prove by induction that there is an ordering  $\sigma_n$, and an $n$-partition $\mathcal{V}$, of $V(G)$ such that both $\sigma_n$ and $\overline{\sigma_n}$ respect  Condition~\ref{cond1:crn:thin} and  Condition~\ref{cond2:crn:thin}. Moreover, if $n$ is even, then  each class of  $\mathcal{V}$ is composed by either vertices of $A_n$ or $A'_n$. The case where $n=1$ is trivial, $\mathcal{V}_1 = \{\{v_1,v'_1\}\}$ and $\sigma_1 = (v_1,v'_1)$. For $n \geq 2$, define $\mathcal{V}^2_n =\{\{v_{n-1},v_n\},\{v'_{n-1},v'_n\}\}$ and $\sigma^2_n = (v_{n-1}, v'_n, v_n, v'_{n-1})$. One can easily verify that both $\sigma^2_2$ and $\overline{\sigma^2_2}$ are in accordance with  Condition~\ref{cond1:crn:thin} and  Condition~\ref{cond2:crn:thin} with respect to $\mathcal{V}^2_2$. Suppose that the claim holds for all $n' < n$. There are  two cases to consider.

If $n$ is even, then define $\mathcal{V} =\mathcal{V}_{n-2} \cup \mathcal{V}^2_n$ and $\sigma_n = \sigma_{n-2}\oplus\sigma^2_n$. By the induction hypothesis,  Condition~\ref{cond2:crn:thin} does not apply to  $\mathcal{V}$ and   Condition~\ref{cond1:crn:thin} holds for $\mathcal{V}_{n-2}$ with the orderings $\sigma_{n-2}$ and $\overline{\sigma_{n-2}}$, and also for $\mathcal{V}^2_{n}$ with the orderings $\sigma^2_{n}$ and $\overline{\sigma^2_{n}}$. Therefore, $\mathcal{V}_{n}$ and $\sigma_n$ are strongly consistent. 

If $n$ is odd, then define  $\mathcal{V} =\mathcal{V}_{n-1} \cup \{\{v_{n},v'_{n}\}\}$ and $\sigma_n = (v_n)\oplus\sigma_{n-1}\oplus(v'_n)$. Regarding the induction hypothesis, it is possible to conclude two things. First,  Condition~\ref{cond1:crn:thin} holds for both $\sigma_{n-1}$ and $\overline{\sigma_{n-1}}$. Second,  Condition~\ref{cond2:crn:thin} only applies to the class $\{v_{n},v'_{n}\}$. Since all the neighbors of $v_n$ and $v'_n$ are in between them in $\sigma_n$, then both $\sigma_n$ and $\overline{\sigma_n}$ are in accordance with  Condition~\ref{cond1:crn:thin} and  Condition~\ref{cond2:crn:thin}. Hence, $\mathcal{V}_{n}$ and $\sigma_n$ are strongly consistent.
\end{proof}

\begin{theorem}\label{comp-thin-crn}
For $n\geq 4$, $\compthin(CR_n) = 2n-4$, while $\compthin(CR_3) = 3$, and $\compthin(CR_1) = \compthin(CR_2) = 2$. The same holds for the proper version.
\end{theorem}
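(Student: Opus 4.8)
The plan is to handle the complete (and proper complete) thinness of $CR_n$ in two directions: a lower bound coming from the structure forced on complete classes in a crown, and a matching upper bound by explicit construction. Throughout, recall that in $CR_n$ a complete set has at most two vertices, one from each side of the bipartition, and of the form $\{v_i, v'_j\}$ with $i \neq j$ (since $v_i$ and $v'_i$ are non-adjacent and two vertices on the same side are non-adjacent). So a complete partition is essentially a way of pairing up some vertices across sides via a partial matching in the bipartite complement-of-a-matching, with the remaining vertices in singleton classes.

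For the lower bound $\compthin(CR_n) \geq 2n-4$ when $n \geq 4$: I would count how many classes can have size two. Suppose $(\mathcal{V}, <)$ is consistent and consider two size-two classes $\{v_i, v'_j\}$ and $\{v_k, v'_\ell\}$. Using Corollary~\ref{last-vertices}, each ``later'' vertex in such a pair must be the last or penultimate vertex of its side, and the penultimate case pins down the last vertex of that side as the mirror of the ``earlier'' vertex. Pushing this through, as in the proofs of Claim~\ref{not-pitufo} and Claim~\ref{no-two-three}, one shows that only a bounded number of size-two classes can coexist — concretely, at most two size-two classes, each consuming the extreme positions of the two sides. Hence at least $2n - 4$ classes are singletons-or-worse, giving $|\mathcal{V}| \geq (2n-4) + 2 = 2n-2$... so I need to be careful: the precise bookkeeping is that two size-two classes cover $4$ vertices, leaving $2n-4$ vertices that must be in their own classes, for a total of $2n-4+2 = 2n-2$ — but the claimed value is $2n-4$, so in fact the correct count must allow more size-two classes, or the ``extreme positions'' argument gives that size-two classes can only involve the first/second and last/second-to-last of each side, yielding up to $4$ crossing pairs and thus $2n - 8 + 4 = 2n - 4$. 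I would work out exactly which vertices can be matched across sides in a consistent order and confirm the count is $2n-4$; this careful extremal count is the main obstacle.

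For the upper bound I would give an explicit order and partition with $2n-4$ complete classes that is in fact strongly consistent, so that the same bound serves for $\comppthin$. The idea is to take four specific vertices on each side into the ``corner'' classes of size two (mimicking the $n=4$ construction in Theorem~\ref{thin-crn}, where classes like $\{v_4, v_2\}$ appear) and leave all remaining $2n-8$ vertices as singletons, interleaving everything into an order that respects Conditions~\ref{cond1:crn:thin} and~\ref{cond2:crn:thin} for both $\sigma$ and $\overline{\sigma}$. Since singleton classes never break consistency, and Conditions~\ref{cond1:crn:thin}--\ref{cond2:crn:thin} only constrain the few size-two classes, verification reduces to checking those conditions on a constant number of classes; by Theorem~\ref{car:crn:thin} this certifies consistency, and checking the reversal certifies strong consistency, so $\comppthin(CR_n) \leq 2n - 4$.

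Finally, the small cases $n \in \{1,2,3\}$ are done by hand: $\compthin(CR_1) = \compthin(CR_2) = 2$ because $CR_1 = K_2$ and $CR_2 = C_4$ each have no complete set of size $\geq 3$ yet are interval graphs, so one needs and suffices with two complete classes (for $CR_1$, the two vertices are non-adjacent only if... in fact $CR_1 = K_2$ is itself complete, so one class of size two suffices — here I must recheck: a single class $\{v_1, v'_1\}$ is complete in $CR_1 = K_2$, giving $\compthin(CR_1) = 1$, which contradicts the table, so the table value $2$ must reflect that $v_1 v'_1$ is a \emph{non}-edge by the crown definition, i.e. $CR_1$ is an edgeless graph on two vertices, whose only complete sets are singletons); and $\compthin(CR_3) = 3$ by observing $CR_3 = C_6$ admits a complete $3$-partition into the three edges of a perfect matching with a suitable order, while Claim~\ref{cr-at-least}-type counting forbids $2$. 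For each small case I would exhibit the order/partition explicitly and invoke Theorem~\ref{car:crn:thin}; the proper version follows since all these small constructions can be taken palindromic or checked directly on the reversal.
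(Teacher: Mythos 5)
Your overall strategy is the same as the paper's: observe that complete classes in $CR_n$ have size at most two and must be of the form $\{v_i,v'_j\}$ with $i\neq j$, bound the number of size-two classes from above, and pair a matching explicit construction. But the critical step — showing that at most four size-two classes can coexist when $n\geq 4$ — is not carried out. You openly flag it (``this careful extremal count is the main obstacle''), so what you have is a plan rather than a proof. The paper's argument here is not just bookkeeping: it fixes, say, $v'_n$ to be the greatest vertex in the order, observes via Condition~\ref{cond2:crn:thin} that all but at most two of the size-two classes $\{v_i,v'_j\}$ must satisfy $v'_j<v_i$, and then picks $v_k$ maximal among those $v_i$'s. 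If there were more than two such classes, one could find a class $\{v_i,v'_j\}$ with $v'_j<v_i<v_k$ and $i,j,k$ all distinct, contradicting Condition~\ref{cond2:crn:thin}. This two-stage use of the ``no third vertex sandwiched between two classmates'' condition is precisely what pins $r\leq 4$; without it you only have the guess $r\leq 4$. Likewise your upper-bound paragraph describes the shape of the construction (four cross pairs at the extremes, the rest singletons) but gives no concrete order, so Conditions~\ref{cond1:crn:thin} and~\ref{cond2:crn:thin} cannot be checked.

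Two smaller inaccuracies: $CR_2$ is $2K_2$, not $C_4$ ($K_{2,2}$ minus a perfect matching is a matching of two edges, and $C_4$ is not an interval graph anyway), though the conclusion $\compthin(CR_2)=2$ still stands; and for $CR_3=C_6$ the lower bound of $3$ is immediate from $6$ vertices and classes of size at most $2$ — no appeal to Claim~\ref{cr-at-least} is needed.
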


\begin{proof}
Let $n \geq 4$. In a partition of $CR_n$ into completes, the classes have either size $1$ or size $2$, and classes of size $2$ are $\{v_i,v_j'\}$ with $i \neq j$. Suppose there are $r$ classes of size $2$, and, by symmetry, suppose without loss of generality that the greatest vertex of the order is $v_n'$. Then for every class $\{v_i,v_j'\}$ of size $2$, $j\neq n$, except perhaps the class where $i=n$, we have $v_j' < v_i$ (by Condition~\ref{cond2:crn:thin}). So, at least $r-2$ classes $\{v_i,v_j'\}$ satisfy $v_j' < v_i$. Let $v_k$ be the greatest vertex among the ones in those $r-2$ classes. If $r-2 > 2$, there is a class $\{v_i,v_j'\}$ satisfying $v_j' < v_i$ and such that $i \neq k$ and $j \neq k$. So $v_j' < v_i < v_k$ and the indices are all different, contradicting Condition~\ref{cond2:crn:thin}. Then $r-2 \leq 2$, so $r \leq 4$, and $\compthin(CR_n) \geq 2n-4$. 

A complete proper thin representation satisfying the bound is the following:
$\sigma=(v'_1, v'_2, v'_5, \dots, v'_n, v_4, v_3,  v_5, \dots, v_n, v_2, v_1, v'_3, v'_4)$ where the classes of size $2$ are $\{v'_1,v_2\}$, $\{v'_2,v_1\}$, $\{v_4,v'_3\}$, $\{v_3,v'_4\}$.
For $n=1$, we need two classes, $\mathcal{V}=\{\{v_1\},\{v'_1\}\}$. For $n = 2$, $\mathcal{V}=\{\{v_1,v'_2\},\{v_2,v'_1\}\}$. In both cases, any order works. For $n=3$, the order $\sigma=(v_1, v'_2, v_3, v'_1, v_2, v'_3)$ and the classes $\mathcal{V}=\{\{v_1,v'_3\}, \{v'_2,v_3\}, \{v'_1,v_2\}\}$ are a complete proper thin representation.
\end{proof}

Now we study the precedence thinness and its variations in crowns. 

\begin{theorem}
For $n\geq 1$, $\fp(CR_n) = \max(1,n-1)$.
\end{theorem}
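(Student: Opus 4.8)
The plan is to prove $\fp(CR_n) = \max(1, n-1)$ by establishing matching lower and upper bounds, leveraging the fact that precedence $k$-thin is a restriction of $k$-thin (so every lower bound on $\thin$ carries over) together with a constructive precedence-consistent layout.

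\textbf{Lower bound.} For $n \le 2$ the value $\max(1,n-1)=1$ is immediate since $CR_1 \cong K_2$ and $CR_2 \cong C_4$ are both interval graphs, hence precedence $1$-thin. For $n \ge 3$ I would invoke Claim~\ref{cr-at-least}: in \emph{any} consistent solution for $CR_n$ the number of classes containing a little vertex is at least $n-1$, so in particular $\fp(CR_n) \ge \thin(CR_n) \ge n-1$. Since a precedence solution is a fortiori a consistent solution, this gives $\fp(CR_n) \ge n-1 = \max(1,n-1)$ for $n \ge 3$, and combined with the trivial bound $\fp \ge 1$ we get $\fp(CR_n) \ge \max(1,n-1)$ for all $n$.

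\textbf{Upper bound (construction).} Here the new content is that the consistent solution must also be a \emph{precedence} one, i.e. each class must occupy a contiguous block of the order. For $n=3$ I would exhibit a concrete precedence $2$-thin layout (for instance checking that the order and $2$-partition used in the $n=3$ case of Theorem~\ref{thin-crn} — or a relabeling of it — can be chosen with both classes consecutive), verifying Conditions~\ref{cond1:crn:thin} and~\ref{cond2:crn:thin}. For general $n \ge 4$, I would give an explicit order $\sigma$ on the $2n$ vertices together with an $(n-1)$-partition into consecutive blocks. The natural template, mirroring Algorithm~\ref{even-crn}/Algorithm~\ref{odd-crn} but with the blocks made contiguous: pair up the two sides so that for most indices $i$ the class is $\{v_i, v'_i\}$ placed as a consecutive pair $v_i v'_i$ (or $v'_i v_i$) in $\sigma$, while a couple of "boundary" classes of size $3$ absorb the leftover vertices, each also placed contiguously. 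One then checks Condition~\ref{cond1:crn:thin} (each little vertex is first in its side within its class — automatic when a class meets a side in at most one vertex, and easy to arrange for the size-$3$ blocks) and Condition~\ref{cond2:crn:thin} (for a mixed class $\{v_i, v'_j\}$ no "third" same-side vertex $v'_z$ or $v_z$ straddles them — handled by placing all the pure pairs $v_iv'_i$ so the $i$-th pair sits symmetrically and by Corollary~\ref{last-vertices}-type reasoning for the size-$3$ classes at the ends).

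\textbf{Main obstacle.} The lower bound is essentially free from Claim~\ref{cr-at-least}. The work is in the construction: one must simultaneously make the solution (i) consistent, (ii) precedence (contiguous classes), using only $n-1$ classes — one fewer than the number of little vertices $n$ allows, which forces exactly one class to carry two little vertices (one per side), and by Claim~\ref{not-pitufo} such a doubled-little class is constrained. So the delicate point is arranging the order and the two size-$3$ boundary blocks so that the consecutive-block requirement does not conflict with Condition~\ref{cond2:crn:thin}; I expect to verify this by an explicit $\sigma$ and a short case check on triples $(r,s,t)$ with $r,s$ in the same block, exactly as in the proofs of Theorems~\ref{thin-crn}--\ref{comp-thin-crn}. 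Once the explicit layout is written down, confirming the two Conditions is routine.
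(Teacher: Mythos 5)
Your lower bound is correct and matches the paper: since a precedence solution is in particular a consistent solution, $\fp(CR_n) \geq \thin(CR_n) \geq n-1$ by Claim~\ref{cr-at-least}. (Two small factual slips: $CR_1$ is two isolated vertices, not $K_2$, and $CR_2 \cong 2K_2$, not $C_4$, since the paper removes a perfect matching from $K_{n,n}$; both are still interval graphs so your conclusion stands.)

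The upper bound, however, has a genuine gap. You propose a template in which ``for most indices $i$ the class is $\{v_i, v'_i\}$''. Such a class is \emph{mixed} (one vertex from each side), and Corollary~\ref{last-vertices} (equivalently Condition~\ref{cond2:crn:thin}) forces any mixed class $\{v,w\}$ with $v<w$ to have $w$ be the last or penultimate vertex of its side. Consequently only a constant number of classes can be mixed, regardless of $n$; an order with $\Theta(n)$ mixed pairs $\{v_i,v'_i\}$ cannot be consistent. The paper avoids this precisely by making almost all classes \emph{same-side} pairs with consecutive indices, e.g. $V_i=(v_i,v_{i-1})$ or $V_i=(v'_i,v'_{i-1})$, plus a size-$1$ block at one end and a size-$5$ block at the other end that carries the unavoidable mixed part. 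Your proposal also stops short of writing down an actual order and verifying the two conditions, so even setting aside the mixed-class issue, the construction remains unsupplied; this is where all the real work of the theorem lies. To repair the argument, replace the $\{v_i,v'_i\}$-pairing idea with same-side consecutive pairs and a single boundary class that absorbs the leftover cross-side vertices, then check Conditions~\ref{cond1:crn:thin} and~\ref{cond2:crn:thin} explicitly as the paper does.
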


\begin{proof}
By Theorem~\ref{thin-crn}, $\fp(CR_n) \geq \max(1,n-1)$. We prove constructively that $\fp(CR_n) \leq \max(1,n-1)$. 

For $n=1$, $V_1 = (v_1,v'_1)$. Since we are dealing with precedence thinness, the order of the vertices is implied by the internal order of each of the classes which are, for $n \geq 2$, in order, $\mathcal{V} = (V_1,V_2,\ldots,V_{n-1})$. 

For $n=2$, let $V_1 = (v_1,v'_2,v_2,v'_1)$.

For $n=3$, let $V_1 = (v'_1)$, $V_2 = (v_2,v'_3,v_1,v'_2,v_3)$.

For $n=4$ let $V_1 = (v'_1)$, $V_2 = (v_2,v_1)$, $V_3 = (v'_3,v_4,v'_2,v_3,v'_4)$.

The consistency for these cases is easy to check. 

For $n > 4$ let $V_1 = (v'_1)$, $V_2 = (v_2,v_1)$, $V_3 = (v'_3,v'_2)$; for $3 < i < n-1$, $V_i = (v_i,v_{i-1})$ if $i$ is even
and $V_i = (v'_i,v'_{i-1})$ if $i$ is odd; $V_{n-1} = (v'_{n-1},v_n,v'_{n-2},v_{n-1},v'_n)$ if $n$ is even, and $V_{n-1} = (v_{n-1},v'_n,v_{n-2},v'_{n-1},v_n)$ if $n$ is odd. 

Note that, for all $V \in \mathcal{V}$, $G[V]$ is an interval graph and the order is a canonical order of $G[V]$. It is easy to see that both Conditions~\ref{cond1:crn:thin} and~\ref{cond2:crn:thin} are being satisfied  for $V_1$ and $V_{n-1}$. Each one of the remaining parts consists of vertices of either $A_n$ or $A'_n$, where the first vertex is a little vertex and the second one is a big vertex. Thus, for these remaining parts, both conditions are also satisfied. 
Therefore, the orders and partitions defined are consistent.
\end{proof}






\begin{theorem}\label{fpp-crn}
For $n\geq 4$, $\fpp(CR_n) = n+1$, while $\fpp(CR_1) = \fpp(CR_2) = 1$, and $\fpp(CR_3) = 3$. 
\end{theorem}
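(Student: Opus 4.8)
The plan is to handle the small cases $n\le 3$ separately and then focus on $n\ge 4$, where the claim is $\fpp(CR_n)=n+1$. For the lower bound $\fpp(CR_n)\ge n+1$, I would combine the structural results already proved for strongly consistent solutions with the precedence constraint. By Claim~\ref{cr-at-least} (little vertices) and Claim~\ref{cr-at-least-st} (big vertices), at least $n-1$ classes contain a little vertex and at least $n-1$ contain a big vertex, and by Corollary~\ref{no-four} and Claim~\ref{no-two-three} the only way to reach $n-1$ classes in a strongly consistent solution is to have exactly two classes of size three (of the very specific shape described in Claim~\ref{no-two-three}, using the first two and last two vertices of one side) and $n-3$ classes of size two. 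The extra ingredient is precedence: in a precedence solution each class is an interval of the order, so the four ``special'' vertices of the distinguished side occupy the first two and last two positions of that side \emph{within the global order restricted to that side's classes}. I would argue that this, together with the interleaving forced by Conditions~\ref{cond1:crn:thin} and~\ref{cond2:crn:thin}, makes an $(n-1)$-class precedence strongly consistent solution impossible for $n\ge 4$, and similarly rule out $n$ classes by a parity/interleaving argument analogous to the one used for $\pthin$ of odd crowns in Theorem~\ref{thin-crn}; hence at least $n+1$ classes are needed.

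For the upper bound $\fpp(CR_n)\le n+1$ I would give an explicit construction, in the same style as the constructions in the preceding theorems: list an ordered partition $\mathcal{V}=(V_1,\dots,V_{n+1})$ where most parts have size two and consist of one little and one big vertex of a single side (so that $G[V_i]$ is trivially a proper interval graph and both $\sigma$ and $\overline{\sigma}$ are locally fine), plus a couple of small parts at the ends to absorb the mirror-pair obstructions. Concretely I expect something like: a singleton or a size-two ``head'' part, then pairs $(v_i,v_{i-1})$ or $(v'_i,v'_{i-1})$ threaded through the middle, and a size-two or size-three ``tail'' part, arranged so that each part is an interval of the global order (precedence) and each part together with its reversal respects Conditions~\ref{cond1:crn:thin} and~\ref{cond2:crn:thin}. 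Then invoke Theorem~\ref{car:crn:thin} (applied to both $\sigma$ and $\overline\sigma$) to certify strong consistency, and check that the parts are consecutive in the order to certify precedence.

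The small cases are quick: for $n=1,2$ the whole vertex set can be taken as a single part with a proper interval (indeed proper canonical) order — $CR_1=K_2$ and $CR_2=C_4$ are proper interval graphs — giving $\fpp=1$; for $n=3$, $CR_3$ is the prism / $K_{3,3}$ minus a matching, and I would exhibit a three-part ordered partition with the required properties, matching the value $3$ already forced as a lower bound by $\pthin(CR_3)=2$ together with the fact (to be checked) that no precedence proper $2$-thin solution exists — most cleanly, by reusing Claim~\ref{no-two-three}-type case analysis specialized to $n=3$, or simply by noting $\fpp(CR_3)\ge\compfpp(CR_3)$-free direct inspection. I would present $n=3$ via an explicit $\mathcal V=(V_1,V_2,V_3)$ and $\sigma$.

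The main obstacle I anticipate is the lower bound for $n\ge 4$: showing that $n-1$ and $n$ classes are both infeasible under the combined precedence + strong-consistency + size constraints. The size restrictions from Corollary~\ref{no-four} and Claim~\ref{no-two-three} pin down the coarse structure, but converting ``the four special vertices of one side are its first two and last two'' into a contradiction with an interval (precedence) partition requires carefully tracking where the \emph{other} side's vertices sit relative to those four, and a parity argument (as in the odd-$\pthin$ case) to kill the $n$-class option. This bookkeeping — essentially a finite but fiddly case analysis on how the two sides interleave in the order — is where the real work lies; everything else is routine verification against Theorem~\ref{car:crn:thin}.
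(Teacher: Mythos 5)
Your proposal identifies the right toolbox (Corollary~\ref{no-four}, Claim~\ref{no-two-three}, the consistency conditions) and the right upper-bound construction, but the lower bound argument you sketch has a genuine gap, and it is precisely the part you flag as ``where the real work lies.''

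The key missing observation — which is the single precedence-specific structural lemma the paper proves — is that in a \emph{precedence} strongly consistent solution of $CR_n$ with $n\ge 3$, the first class and the last class must each be a singleton. The paper's proof of this is short: if the first class $V$ had two vertices $a<b$, then either $\side(a)=\side(b)$ (two little vertices of the same side in one class, contradicting Corollary~\ref{pitufos-class}) or $\side(a)\ne\side(b)$ (then by Corollary~\ref{last-vertices}, $b$ must be the last or penultimate vertex of $\side(b)$, impossible since by precedence $b$ is among the very first vertices of the whole order and hence of its side); the last class is symmetric via Corollary~\ref{pitufos-class-st} and Corollary~\ref{first-vertices}. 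Your sketch never isolates this fact, and the ``parity/interleaving argument analogous to the one used for $\pthin$ of odd crowns'' you propose as a substitute would not suffice: that parity argument only bites for odd $n$, yet here $\fpp(CR_n)=n+1$ must be established for all $n\ge 4$, including even $n$ where $\pthin(CR_n)=n-1$. It is exactly the first/last-singleton observation (not parity) that accounts for the jump from $n-1$ to $n+1$ under precedence. Once that lemma is in hand, the rest is a clean counting argument: two singleton classes plus Corollary~\ref{no-four} force at least two size-$3$ classes whenever the total is $\le n$; Claim~\ref{no-two-three} pins their shape, placing the first two and last two vertices of one side in them; and then the forced membership of the two singleton vertices on the opposite side collides with Corollary~\ref{last-vertices}, giving the contradiction. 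Your upper-bound sketch and small-case treatment are essentially correct and match the paper's approach, though the paper's construction uses singleton head and tail parts rather than a possible size-two or size-three tail.
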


\begin{proof}
The cases of $CR_1$ and $CR_2$ are trivial, since they are proper interval graphs. Let $n \geq 3$, and suppose $(\mathcal{V},<)$ is a strongly consistent solution for $G=CR_n$. 

We claim that if $V$ is the first or the last part in the solution, then $|V|=1$. Let $V$ be the first part, and suppose by contradiction that $|V|>1$. Let $a < b \in V$ be the first two vertices of $V$. If $\side(a)=\side(b)$, then they are two little vertices of the same side in the same class, contradicting Corollary~\ref{pitufos-class}. 
If $\side(a)\neq\side(b)$, then, by Corollary~\ref{last-vertices}, $b$ is either the penultimate or the last vertex of $\side(b)$, a contradiction because it is the first vertex of $\side(b)$ and there are at least three vertices on its side. 
The argument for the last class is analogous, by using Corollary~\ref{pitufos-class} and Corollary~\ref{first-vertices}. 

In particular, this implies that $\fpp(CR_3) \geq 3$. A strongly consistent solution with three classes is $(v_1)(v'_2,v_3,v'_1,v_2)(v'_3)$. 

Suppose now $n \geq 4$. By Corollary~\ref{no-four}, every class has size at most three. Moreover, given that the first class and the last class have size one, in order to obtain a solution with less than $n+1$ classes, at least two classes of size three are needed. In that case, by Claim~\ref{no-two-three}, 
one of them, say $V$, has vertices $(a,b,c)$, where $a$ and $b$ are the first two vertices of their side, say $A_n$, and $c$ belongs to $A'_n$, and the other one, say $W$, has vertices $(x,y,z)$, where $y$ and $z$ are the last two vertices $A_n$ and $x$ belongs to $A'_n$. 
 In particular, $V$ precedes $W$, but these are not the first or last classes, because the first and the last class contain only one vertex. Indeed, this situation implies that the only vertex $v$ in the first class and the only vertex $w$ in the last class belong to $A'_n$. But then we have $c < x < w \in A'_n$, which contradicts Corollary~\ref{last-vertices} applied to $b < c$. Hence, for $n \geq 4$, $\fpp(CR_n) \geq n+1$.    

We will complete the proof by construction. Define the following ordered sets: 
\begin{itemize}
    \item $V_1 = (v_1)$ 
    
    \item for all $2\leq i \leq n$, if $i$ is even, then  $V_i = (v'_i,v'_{i-1})$,  otherwise,  $V_i = (v_i,v_{i-1})$

     \item if $n$ is even, then $V_{n+1} = (v_n)$, otherwise,  $V_{n+1} = (v'_n)$ 
\end{itemize}

Let $\mathcal{V} = (V_1,V_2, \ldots, V_n, V_{n+1})$ be an ordered  $(n+1)$-partition of $V(CR_n)$.
Note that, for all parts $V_i = \{v_i,v_{i-1}\}$ (resp. $V_i = \{v'_i,v'_{i-1}\}$), $v_i < v'_i$ and $v_{i-1}>v'_{i-1}$ (resp.  $v'_i < v_i$ and $v'_{i-1}>v_{i-1}$). Thus, Condition~\ref{cond1:crn:thin} is satisfied for both the order and its reversal. Moreover, note that all parts consist of vertices of either $A_n$ or $A'_n$, so Condition~\ref{cond2:crn:thin} is also satisfied for both the order and its reversal. 
Therefore, the order defined is strongly consistent with $\mathcal{V}$, and for $n \geq 4$, $\fpp(CR_n) \leq n+1$.   
\end{proof}

\begin{theorem}\label{thm:crn:indthin-prec}
For $n\geq 2$, $\indfp(CR_n) = n+1$, and $\indfp(CR_1) = 1$. The same holds for the proper version.
\end{theorem}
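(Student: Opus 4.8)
The plan is to prove the two inequalities $\indfp(CR_n) \geq n+1$ and $\indfpp(CR_n) \leq n+1$ for $n \geq 2$; since $\indfpp \geq \indfp$ always, and $\indfpp$ is the most restrictive of the two parameters, this pins down both. The case $n=1$ is immediate: $CR_1 = K_2$ is a proper interval graph with the (single-part) partition into one independent set, so $\indfp(CR_1) = \indfpp(CR_1) = 1$. For $n \geq 2$, the lower bound will lean on the independent-thinness lower bound already established in Theorem~\ref{thm:crn:indthin}, namely $\indthin(CR_n) \geq n$, together with the extra rigidity forced by the precedence constraint.

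For the lower bound $\indfp(CR_n) \geq n+1$, I would argue as follows. Suppose $(\mathcal{V}, <)$ is a precedence consistent solution with all classes independent. By the proof of Theorem~\ref{thm:crn:indthin}, each independent class $V$ is either of the form $\{v_i, v_i'\}$ (a mirror pair) or a subset of one side $\side(v)$; moreover, for every pair $\{v_i, v_i'\}$ there is a class in which one of $v_i, v_i'$ is the first vertex of that class, forcing $|\mathcal{V}| \geq n$. Now suppose toward a contradiction that $|\mathcal{V}| = n$; then, since there are exactly $n$ mirror pairs and each needs its own "first-vertex witness" class, the correspondence between mirror pairs and classes is a bijection, and each class contributes exactly one little vertex, which must be its first element (by Condition~\ref{cond1:crn:thin}). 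I would then examine whether any class can be a mirror pair $\{v_i, v_i'\}$: since $v_i$ and $v_i'$ are nonadjacent, this is allowed as an independent set, but the precedence constraint now forces this class to occupy a contiguous block in the order, and I would derive a contradiction from Condition~\ref{cond2:crn:thin} by locating a vertex on the far side whose neighborhood straddles this block — more precisely, showing that placing all $n$ classes contiguously, each a size-$2$ subset of a single side except possibly mirror pairs, cannot simultaneously satisfy Conditions~\ref{cond1:crn:thin} and~\ref{cond2:crn:thin} for both the order and (if one also wanted the proper version) its reversal. The cleanest route is probably a parity/counting argument mirroring the one used for $\pthin(CR_n)$ in Theorem~\ref{thin-crn}: with precedence, the classes lying "inside" the order must pair up sides in a rigid way, and $n$ classes leaves no room for the two boundary singletons that the structure demands.

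For the upper bound, I would give an explicit construction achieving $n+1$ classes, analogous to the one in Theorem~\ref{fpp-crn} but with every class an independent set. The natural choice: take $V_1 = (v_1)$; for $2 \leq i \leq n$, let $V_i = (v_i', v_{i-1}')$ when the parity is one way and $V_i = (v_i, v_{i-1})$ when it is the other (so that consecutive-indexed vertices of the same side are paired, which is independent in $CR_n$ since same-side vertices are nonadjacent); and a final singleton $V_{n+1} = (v_n)$ or $(v_n')$ according to parity. Since every part is contained in one side $A_n$ or $A_n'$, every part is an independent set, so Condition~\ref{cond2:crn:thin} is vacuous; and arranging each two-element part as (little vertex, big vertex) makes Condition~\ref{cond1:crn:thin} hold for both the order and its reversal, exactly as in the proof of Theorem~\ref{fpp-crn}. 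Thus the construction is strongly consistent, giving $\indfpp(CR_n) \leq n+1$, hence $\indfp(CR_n) = \indfpp(CR_n) = n+1$.

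The main obstacle I anticipate is the lower bound: ruling out an $n$-class precedence independent solution requires carefully combining the "one witness class per mirror pair" count from Theorem~\ref{thm:crn:indthin} with the contiguity forced by precedence, and checking that a mirror-pair class cannot be exploited to save a class. The parity argument from Theorem~\ref{thin-crn} is the template, but here one must also account for the possibility of classes being mirror pairs rather than same-side pairs, which slightly complicates the bookkeeping; I expect one extra case analysis (mirror-pair class vs.\ same-side class for the boundary positions) beyond what appears in the $\pthin$ proof.
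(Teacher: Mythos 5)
Your upper bound is correct and matches the paper exactly: the partition used in Theorem~\ref{fpp-crn} already has every part contained in one side of the bipartition (hence independent), and the same verification of Conditions~\ref{cond1:crn:thin} and~\ref{cond2:crn:thin} under reversal gives strong consistency and $\indfpp(CR_n)\le n+1$.

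The lower bound, however, is not actually proved; there is a genuine gap. You correctly extract the facts that every independent class is a mirror pair or one-sided, that every little vertex is the first of its class (so $|\mathcal{V}|\ge n$ and each class has at most one little vertex), and that at most one class can be a mirror pair. But the subsequent plan leans on two things that do not hold. First, you assume the classes are ``size-$2$ subsets of a single side except possibly mirror pairs,'' which is unjustified: an independent one-sided class can contain any number of big vertices, and nothing in the $\indfp$ setting bounds class sizes. Second, you propose to transplant the parity argument from $\pthin(CR_n)$ in Theorem~\ref{thin-crn}, but that argument relies throughout on \emph{strong} consistency (Corollaries~\ref{pitufos-class-st}, \ref{no-three-side}, \ref{first-vertices}, Corollary~\ref{no-four}, Claim~\ref{no-two-three}), which gives the class-size caps that drive the parity count; $\indfp$ has only consistency, so none of those corollaries apply and the template collapses. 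The paper's actual lower-bound argument is short and quite different: after noting that Condition~\ref{cond2:crn:thin} together with precedence forces at most one mirror-pair class, it takes the $<$-greatest little vertex $v$ not lying with its mirror, looks at the class $V$ containing $\mirror(v)$, and shows $\first(V)$ must be a big vertex (else $\first(V)$ would be a little vertex greater than $v$ not lying with its mirror, contradicting maximality of $v$). That class $V$ contains no little vertex and is therefore distinct from the $n$ classes headed by little vertices, yielding $\indfp(CR_n)\ge n+1$ directly, with no case analysis on class sizes at all. You would need an argument of this kind -- or at minimum to carry out the precedence-structural analysis you gesture at (showing the last class must be the unique mirror pair and then deriving a contradiction from the class before it) -- rather than ``I would derive a contradiction'' and ``I expect one extra case analysis.''
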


\begin{proof}
Let $G =CR_n$, $n \geq 2$. We prove that $\indfp(G) \geq n+1$ and $\indfpp(G) \leq n+1$.

Let $<$ and $\mathcal{V}$ be an ordering and a precedence partition of $V(G)$ into independent sets, respectively, that are consistent. Let $V \in \mathcal{V}$, and $v \in V$. Since $V$ is an independent set of $G$, either $V=\{v,\mirror(v)\}$ or $V \subseteq \side(v)$. Thus, by  Condition~\ref{cond1:crn:thin}, if $v$ is a little vertex then $v = \first(V)$. This implies that for each pair $(v_i,v'_i)$ there is a class of the partition in which either $v_i$ or $v'_i$ (the little one) is the first vertex of the class.

By the precedence constraint and Condition~\ref{cond2:crn:thin}, at most one class is composed by a vertex and its mirror. Since $n \geq 2$, there is a little vertex that is not in the same class as its mirror. Let $v$ be the greatest such vertex according to $<$, and let $V \in \mathcal{V}$ such that $\mirror(v) \in V$. Then $v < \mirror(v)$, $v \not \in V$, and by the precedence constraint, $v < \first(V)$. Since $\mirror(\first(V)) \neq \mirror(v)$, $\mirror(\first(V))$ is not in the same class as its mirror. By definition of $v$, $\first(V)$ is not a little vertex. Therefore, there is a class of the partition for each little vertex $w$, where $w$ is the first of the class, plus the class $V$ whose first vertex is not little. Hence $\indfp(G) \geq n+1$.  

An order and precedence partition into $n+1$ independent sets that are strongly consistent can be defined as follows.

For $n = 2$, 
$(v_1) (v'_2,v'_1) (v_2)$.  

For $n \geq 3$ odd, 
$(v_1) [(v'_{i+1},v'_i) (v_{i+2},v_{i+1})]_{1 \leq i \leq n-2}  (v'_n)$.

For $n \geq 4$ even, 
$(v_1) [(v'_{i+1},v'_i) (v_{i+2},v_{i+1})]_{1 \leq i \leq n-3}(v'_n,v'_{n-1})(v_n)$. 
\end{proof}

\begin{theorem}\label{prec-comp-thin-crn}
For $n\geq 2$, $\compfp(CR_n) = 2n-2$, and $\compfp(CR_1) = 2$. The same holds for the proper version.
\end{theorem}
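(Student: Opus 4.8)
The plan is to establish $\compfp(CR_n) \ge 2n-2$ and $\compfpp(CR_n) \le 2n-2$ for $n \ge 2$; since $\compfp \le \compfpp$ in general, this pins down both parameters at once, and the case $n=1$ is trivial because $CR_1$ consists of two non-adjacent vertices, so every clique partition uses two classes.

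For the lower bound I would first record the purely structural facts that, $CR_n$ being triangle-free, every class of a clique partition has size at most two, a size-two class has the form $\{v_i,v'_j\}$ with $i \ne j$, and therefore a clique partition has exactly $2n-r$ classes, where $r$ is the number of size-two classes. It then suffices to show $r \le 2$ in any consistent precedence clique partition. Assume $r \ge 3$ and let $C_1 = \{a,b\}$, $a < b$, be the first size-two class in the precedence order; by the precedence condition every vertex of every later class exceeds $b$. Take two further size-two classes $C_2, C_3$ coming after $C_1$. Each is a size-two clique, hence has exactly one vertex on the side $\side(b)$, and that vertex is greater than $b$. But Condition~\ref{cond2:crn:thin}, applied to $C_1$ with $v_i, v'_j$ its two vertices (in the direct case if $\side(b) = A'_n$, in the ``resp.'' case if $\side(b) = A_n$), forces every vertex of $\side(b)$ greater than $b$ to equal $\mirror(a)$. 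Thus the $\side(b)$-vertices of $C_2$ and of $C_3$ would both be $\mirror(a)$, contradicting that $C_2$ and $C_3$ are disjoint. Hence $r \le 2$, so $\compfp(CR_n) \ge 2n-2$, and the bound transfers to $\compfpp$ since strong consistency implies consistency.

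For the upper bound I would exhibit an explicit strongly consistent precedence clique partition of $V(CR_n)$ into $2n-2$ cliques. Since $V \cap \side(v) = \{v\}$ for every vertex $v$ of a clique $V$, Condition~\ref{cond1:crn:thin} is automatically satisfied by any clique partition, so only Condition~\ref{cond2:crn:thin} must be verified, and for both $\sigma$ and $\overline{\sigma}$. For $n \ge 3$ I propose
\[
\sigma = (v'_3, v'_4, \dots, v'_n,\ v_1, v'_2,\ v_3, v'_1,\ v_4, v_5, \dots, v_n, v_2),
\]
with the two size-two classes $\{v_1,v'_2\}$ and $\{v_3,v'_1\}$ and all remaining vertices as singletons (each class is consecutive in $\sigma$, so the partition is a precedence one). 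In $\sigma$ the vertices of $A_n$ occur as $v_1 < v_3 < v_4 < \dots < v_n < v_2$ and those of $A'_n$ as $v'_3 < \dots < v'_n < v'_2 < v'_1$; using that the only $A'_n$-vertex after $v'_2$ is $v'_1 = \mirror(v_1)$, that $v'_1$ is last in $A'_n$, and the analogous facts for $\overline{\sigma}$ (where the only $A_n$-vertex after $v_1$ is $v_2 = \mirror(v'_2)$, and nothing of $A_n$ follows $v_3$ except $v_1 = \mirror(v'_1)$), one checks Condition~\ref{cond2:crn:thin} directly. For $n=2$ the ordering $(v_1, v'_2, v_2, v'_1)$ with classes $\{v_1,v'_2\}$ and $\{v_2,v'_1\}$ works. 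Hence $\compfpp(CR_n) \le 2n-2$, and combined with the lower bound both equal $2n-2$.

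I expect the lower bound to be routine once one thinks of focusing on the \emph{first} size-two class: Condition~\ref{cond2:crn:thin} then does the work almost mechanically, and this is exactly where the precedence hypothesis cuts the count of size-two classes from the four allowed in Theorem~\ref{comp-thin-crn} down to two. The genuinely fiddly part is the upper bound: precedence forces each of the two surviving cross-side pairs to be \emph{consecutive} in $\sigma$ while Condition~\ref{cond2:crn:thin}, for $\sigma$ and for $\overline{\sigma}$, pins its two endpoints near the first two and the last two vertices of their sides, so the singletons must be interleaved just so -- in particular $v_1$ must be first among the $A_n$-vertices and $v'_1$ last among the $A'_n$-vertices -- for both windows to be realizable simultaneously.
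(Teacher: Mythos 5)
Your proof is correct. The lower bound argument is genuinely different from the paper's and, in fact, cleaner. The paper takes an \emph{arbitrary} size-two class $\{v_i,v'_j\}$ with (say) $v_i<v'_j$, notes that precedence forces any other size-two class to lie entirely before or entirely after it, and then performs a multi-branch case analysis with Condition~\ref{cond2:crn:thin} to force index equalities ($b=i$ or $a=j$), concluding first that there are at most two additional size-two classes and then, via a second round of the same analysis, that three such classes produce a contradiction. Your observation that one should take the \emph{first} size-two class $C_1=\{a,b\}$, $a<b$, is a nice simplification: precedence then puts every other size-two class entirely after $b$, Condition~\ref{cond2:crn:thin} applied just once to $C_1$ forces every $\side(b)$-vertex after $b$ to be $\mirror(a)$, and two later size-two classes would have to share that vertex. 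This collapses the paper's casework to a single pigeonhole.

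Your upper bound construction differs from the paper's (the paper uses $\sigma=(v'_3, \dots, v'_n, v_1, v'_2, v_2, v'_1, v_3,  \dots, v_n)$ with size-two classes $\{v_1,v'_2\}$, $\{v_2,v'_1\}$; you use the same prefix but shift the suffix, with size-two classes $\{v_1,v'_2\}$, $\{v_3,v'_1\}$), and I checked that yours also satisfies both conditions for $\sigma$ and $\overline\sigma$. One small inaccuracy in your sanity-check phrasing: you say that in $\overline\sigma$ ``the only $A_n$-vertex after $v_1$ is $v_2$,'' but in your ordering $v_1$ is the \emph{last} $A_n$-vertex in $\overline\sigma$, so there are none -- the check is vacuous and even easier than stated. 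This does not affect the correctness of the construction.
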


\begin{proof}
Let $n \geq 2$. In a partition of $CR_n$ into completes, the classes have either size $1$ or size $2$, and classes of size $2$ are $\{v_i,v_j'\}$ with $i \neq j$. Suppose that $\{v_i,v_j'\}$ is a class in a precedence partition consistent with a vertex order, so $v_i$ and $v'_j$ are consecutive in the order, and without loss of generality, $v_i < v'_j$.
Suppose that there is another class $\{v_a,v'_b\}$ of size two in the partition. Then either both are greater than $v'_j$, or both are smaller than $v_i$. In the first case, by Condition~\ref{cond2:crn:thin}, it must be $b = i$. 
In the second case, either $v_a < v'_b$ in which case $a = j$ by Condition~\ref{cond2:crn:thin}; or $v'_b < v_a$ in which case $b = i$ by Condition~\ref{cond2:crn:thin}.
In particular, in any case, either $b=i$ or $a=j$. So, there are at most two more classes of size two. 

Suppose that there are two such classes. By the observations above, one of them contains $v'_i$ and the other one contains $v_j$. Namely, the classes are $\{v'_i,v_k\}$ and $\{v'_\ell,v_j\}$, and both $v'_\ell$ and $v_j$ are smaller than $v_i$, since $i \neq \ell$. By the same observations, since $i \neq \ell$, then $v_j < v'_\ell$. 
Doing the same analysis as before but for the class $\{v'_i,v_k\}$ with respect to the class $\{v_j,v'_\ell\}$, since $i\neq j$, $v'_i$ and $v_k$ are smaller than $v_j$, $v_k < v'_i$, and $k=\ell$. 
But then, $v_k < v'_i < v'_j$ with $k \neq j$, a contradiction to Condition~\ref{cond2:crn:thin}.

Therefore, there are at most two classes of size two in a precedence partition consistent with a vertex order, and $\compfp(CR_n) \geq 2n-2$. 

A complete proper precedence thin representation satisfying the bound is the following:
$\sigma=(v'_3, \dots, v'_n, v_1, v'_2, v_2, v'_1, v_3,  \dots, v_n)$ where the classes of size $2$ are $\{v_1,v'_2\}$ and $\{v_2,v'_1\}$.
For $n=1$, two classes are needed because the graph is edgeless. \end{proof}

\section{Precedence thinness of cographs} \label{sec:cographs}

In this section, we describe the behavior of the precedence thinness under the union and join operations, which allows to compute the precedence thinness of cographs efficiently.

With respect to the behavior of the thinness under the union and join operations, the following results were proved in the literature. These results allowed to compute the thinness of cographs in polynomial time, as well as to characterize $k$-thin graphs by forbidden induced subgraphs within the class of cographs. 

\begin{theorem}[\hspace{1sp}\cite{B-D-thinness}] \label{thm:union}
Let $G_1$ and $G_2$ be graphs. Then, $\thin(G_1 \cup G_2) =
\max(\thin(G_1),$ $\thin(G_2))$.
\end{theorem}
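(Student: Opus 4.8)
The plan is to prove the two inequalities separately. The inequality $\thin(G_1 \cup G_2) \geq \max(\thin(G_1),\thin(G_2))$ is immediate from the observation recorded earlier in the paper that $\thin(H) \leq \thin(G)$ whenever $H$ is an induced subgraph of $G$: both $G_1$ and $G_2$ are induced subgraphs of $G_1 \cup G_2$, so each of $\thin(G_1)$ and $\thin(G_2)$ is at most $\thin(G_1 \cup G_2)$.

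For the reverse inequality, set $k = \max(\thin(G_1),\thin(G_2))$. First I would take a $k$-partition $\mathcal{V}^1 = (V^1_1,\dots,V^1_k)$ of $V(G_1)$ together with an ordering $\sigma_1$ consistent with it, and likewise a $k$-partition $\mathcal{V}^2 = (V^2_1,\dots,V^2_k)$ of $V(G_2)$ with a consistent ordering $\sigma_2$; if one of the graphs has thinness strictly less than $k$, pad its partition with empty parts so that both have exactly $k$ parts. Then I would define, on $V(G_1 \cup G_2)$, the $k$-partition $\mathcal{V}$ with parts $V_i = V^1_i \cup V^2_i$ for $1 \leq i \leq k$, and the ordering $\sigma = \sigma_1 \oplus \sigma_2$, i.e.\ place all vertices of $G_1$ first (in the order $\sigma_1$) and then all vertices of $G_2$ (in the order $\sigma_2$).

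The remaining step is to verify that $\sigma$ is consistent with $\mathcal{V}$. Consider a triple $(p,q,r)$ with $p <_\sigma q <_\sigma r$, with $p,q \in V_i$ for some $i$, and with $(p,r) \in E(G_1 \cup G_2)$. Since there are no edges between $V(G_1)$ and $V(G_2)$ in the disjoint union, $p$ and $r$ lie in the same $V(G_j)$, $j \in \{1,2\}$. Because in $\sigma$ all of $V(G_1)$ precedes all of $V(G_2)$ and $q$ lies between $p$ and $r$, the vertex $q$ must also lie in $V(G_j)$. Hence $p,q,r \in V(G_j)$, $p,q \in V^j_i$, $(p,r) \in E(G_j)$, and consistency of $\sigma_j$ with $\mathcal{V}^j$ gives $(q,r) \in E(G_j) \subseteq E(G_1 \cup G_2)$, as required. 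This shows $\thin(G_1 \cup G_2) \leq k$ and completes the proof.

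There is no real obstacle here: the only point that needs a moment's care is noting that, in the concatenated order, the two endpoints $p$ and $r$ of an edge force the middle vertex $q$ to lie in the same component-side, so that the relevant triple already occurs within a single $G_j$ and the consistency of $\sigma_j$ can be invoked directly.
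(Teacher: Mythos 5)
The paper does not prove this theorem; it cites it from \cite{B-D-thinness} as a known result, so there is no in-paper proof to compare yours against. Your argument is correct and is the standard one: the lower bound follows from the monotonicity of thinness under taking induced subgraphs (which the paper itself records as a general observation), and the upper bound is obtained by merging the parts index-by-index and taking the concatenated ordering $\sigma_1 \oplus \sigma_2$, the crucial point being exactly the one you single out, namely that disjointness forces any triple $(p,q,r)$ with $(p,r) \in E(G_1 \cup G_2)$ to lie entirely inside one of $G_1$ or $G_2$, so the consistency of $\sigma_j$ with $\mathcal{V}^j$ applies directly. Note that this concatenation construction is the same one the paper uses in its proof of Theorem~\ref{thm:fp-union} for the precedence variant, so your approach is fully in the spirit of the paper's own methods.
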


\begin{theorem}[\hspace{1sp}\cite{BGOSS-thin-oper}] \label{thm:join2}
Let $G_1$ and $G_2$ be graphs. If $G_2$ is complete, then
$\thin(G_1 \vee G_2) = \thin(G_1)$. If neither $G_1$ nor $G_2$ are
complete, then $\thin(G_1 \vee G_2) = \thin(G_1) + \thin(G_2)$.
\end{theorem}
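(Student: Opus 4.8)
The plan is to get a matching upper and lower bound for $\thin(G_1\vee G_2)$; everything is routine except the lower bound in the ``neither complete'' case.

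\emph{Upper bounds.} Fix consistent solutions $(\mathcal V^1,\sigma^1)$ for $G_1$ and $(\mathcal V^2,\sigma^2)$ for $G_2$ with $\thin(G_1)$ and $\thin(G_2)$ classes, and use the ordering $\sigma^1\oplus\sigma^2$ (all of $V(G_1)$, then all of $V(G_2)$). If $G_2$ is complete, I keep the classes $\mathcal V^1$ and dump every vertex of $G_2$ into one fixed class; a triple $(p,q,r)$ with $p,q$ in a common class is fine because, if $r\in V(G_2)$ then $qr\in E$ (a join edge if $q\in V(G_1)$, completeness if $q\in V(G_2)$), and if $r\in V(G_1)$ then $p,q,r\in V(G_1)$ and consistency of $\sigma^1$ applies, giving $\thin(G_1\vee G_2)\le\thin(G_1)$. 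If neither $G_i$ is complete, I instead take as partition the disjoint union $\mathcal V^1\cup\mathcal V^2$, so each class lies entirely on one side: any triple whose first two vertices share a class has all three vertices on one side, or has $r$ on the opposite side of $p,q$, in which case $qr$ is a join edge; this yields $\thin(G_1\vee G_2)\le\thin(G_1)+\thin(G_2)$.

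\emph{Lower bounds.} Since $G_1$ is an induced subgraph of $G_1\vee G_2$ and thinness never increases under taking induced subgraphs (Section~\ref{sec:pre}), $\thin(G_1\vee G_2)\ge\thin(G_1)$, settling the complete case. For the case where neither side is complete, I would pass to the order-based formulation of thinness: for an ordering $\sigma$ of $V(G)$ let $G^\sigma$ be the graph on $V(G)$ with an edge between $u$ and $v$ (say $u<_\sigma v$) exactly when some $w>_\sigma v$ has $uw\in E$ and $vw\notin E$; then $\thin(G)=\min_\sigma\chi(G^\sigma)$, and for a fixed $\sigma$ the consistent partitions are exactly the proper colourings of $G^\sigma$. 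Fix an optimal $\sigma$ for $G=G_1\vee G_2$ and put $H=G^\sigma$. First, for $u,v\in V(G_i)$ any witnessing $w$ must lie in $V(G_i)$, because a vertex of the opposite side is joined to both $u$ and $v$; hence $H[V(G_i)]$ is exactly the conflict graph of $G_i$ under the restriction of $\sigma$, so $\chi(H[V(G_i)])\ge\thin(G_i)$. Second, call $v\in V(G_i)$ \emph{$\sigma$-saturated} if every later vertex of $V(G_i)$ is a neighbour of $v$ in $G_i$, and let $T_i$ be the set of saturated vertices of $V(G_i)$; then every non-saturated $u\in V(G_1)\setminus T_1$ is $H$-adjacent to every non-saturated $w\in V(G_2)\setminus T_2$ (if $u<_\sigma w$, a later non-neighbour of $w$ on its own side is a valid witness, since it is joined to $u$; symmetrically otherwise). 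Consequently $\chi(H)\ge\chi(H[V(G_1)\setminus T_1])+\chi(H[V(G_2)\setminus T_2])$.

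\emph{The main obstacle.} What remains, and is the heart of the argument, is to show $\chi(H[V(G_i)\setminus T_i])\ge\thin(G_i)$, i.e.\ that deleting the saturated vertices does not drop the chromatic number below $\thin(G_i)$. A useful remark is that $T_i$ is an independent set of $H[V(G_i)]$ (a saturated vertex has no $H$-neighbour preceding it) and that the $\sigma$-last vertex of $V(G_i)$ is isolated in $H[V(G_i)]$, hence deletable for free; but deleting an independent set only controls the chromatic number up to one colour per side, so a finer argument is needed. I would try to close the gap either by an exchange/recolouring argument that reassigns the colours of $T_i$ onto an existing colour class without creating a conflict, or by switching to an induction on $|V(G_1)|+|V(G_2)|$: delete the $\sigma$-last vertex $z$ (isolated in $H$, hence harmless), apply the inductive hypothesis to $(G_1-z)\vee G_2$ as long as $G_1-z$ stays non-complete, and handle by hand the remaining cases, in which $G_1-z$ becomes complete and $G_1$ is a complete graph plus one vertex, hence an interval graph. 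I expect the delicate bookkeeping here — selecting an optimal solution of $G_1\vee G_2$ whose extreme vertex sits on the side that is not critical for that side's thinness, plus the small-graph and empty-graph special cases — to be where the real difficulty lies.
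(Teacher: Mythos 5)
This theorem is only \emph{cited} in the paper (from \cite{BGOSS-thin-oper}); there is no proof of it here, so there is nothing to match your argument against. Evaluating your proposal on its own merits: the two upper-bound constructions are correct, and so is the lower bound in the complete case via induced subgraphs. For the main lower bound you correctly reduce to the conflict-graph formulation, correctly note that $H[V(G_i)]=G_i^{\sigma|_{V(G_i)}}$ (witnesses for same-side pairs must be same-side), and correctly establish that every vertex of $V(G_1)\setminus T_1$ is $H$-adjacent to every vertex of $V(G_2)\setminus T_2$.

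However, the gap you flag is genuine and your two proposed repairs do not close it. Since each $T_i$ is independent in $H[V(G_i)]$, deleting it can only be controlled to drop the chromatic number by at most one, so your chain of inequalities as written only yields $\thin(G_1\vee G_2)\ge\chi(H)\ge(\thin(G_1)-1)+(\thin(G_2)-1)$, which is short by two. Note also that when $u\in V(G_i)\setminus T_i$ and $v\in V(G_i)\setminus T_i$ with $u<_\sigma v$, the witness for a possible $H$-edge can still lie in $T_i$ (saturation of $w$ constrains $w$'s later neighbours, not its relation to $v<_\sigma w$), so $H[V(G_i)\setminus T_i]$ is \emph{not} the conflict graph of the induced subgraph $G_i-T_i$; this rules out the most natural attempt to prove $\chi(H[V(G_i)\setminus T_i])\ge\thin(G_i)$ and leaves no obvious route. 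The inductive alternative also stalls: deleting the $\sigma$-last vertex $z$ (isolated in $H$) gives $\thin(G)\ge\thin(G-z)\ge\thin(G_1-z)+\thin(G_2)$, but $\thin(G_1-z)$ can be strictly smaller than $\thin(G_1)$, and you have no leverage to choose an optimal ordering of $G_1\vee G_2$ whose last vertex sits on a ``non-critical'' side. So the additive lower bound in the neither-complete case remains unproved; a different combinatorial argument (e.g.\ a structural analysis of which classes of a consistent partition can mix vertices from both sides) is needed to finish.
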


Also in \cite{BGOSS-thin-oper}, it was observed that the proper thinness of the join $G_1 \vee G_2$ cannot be expressed as a function whose only parameters are the proper thinness of
$G_1$ and $G_2$ (even excluding simple particular cases, like
trivial or complete graphs).

Next, we present a theorem that describes the precedence thinness over the union of two graphs.

\begin{theorem}[Precedence thinness of union]\label{thm:fp-union}
Let $G_1$ and $G_2$ be graphs. Then, $\fp(G_1 \cup G_2) =
\fp(G_1)+\fp(G_2)-1$.
\end{theorem}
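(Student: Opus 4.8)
The plan is to prove both inequalities separately, exploiting the fact that precedence partitions are \emph{ordered} sequences of classes whose vertices are consecutive in the vertex order.

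\textbf{Upper bound $\fp(G_1\cup G_2)\le \fp(G_1)+\fp(G_2)-1$.} Take optimal precedence-consistent solutions $(\mathcal V^1,\sigma^1)$ for $G_1$ with $k_1=\fp(G_1)$ ordered classes $V^1_1,\dots,V^1_{k_1}$, and $(\mathcal V^2,\sigma^2)$ for $G_2$ with $k_2=\fp(G_2)$ ordered classes $V^2_1,\dots,V^2_{k_2}$. Since there are no edges between $G_1$ and $G_2$, I would place all of $\sigma^1$ before all of $\sigma^2$: the ordering $\sigma=\sigma^1\oplus\sigma^2$. Any triple with all three vertices in $G_1$ (or all in $G_2$) is handled by the original consistency; a triple straddling the two parts either has $r$ in $G_2$ while $p$ is in $G_1$ (then $(p,r)\notin E$, vacuous) or has $p,q$ in $G_1$ and $r$ in $G_2$ (again $(p,r)\notin E$). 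For the partition, the key observation is that we may \emph{merge} the last class of $\mathcal V^1$ with the first class of $\mathcal V^2$: the merged class $V^1_{k_1}\cup V^2_1$ is still consecutive in $\sigma$, and since $V^1_{k_1}$ and $V^2_1$ are anticomplete, no triple inside this class breaks consistency, and no triple with two vertices in it and the third outside breaks consistency either (one needs $p,q$ in the merged class and an edge $(p,r)$; but every neighbor of a $G_1$-vertex is a $G_1$-vertex preceding it, contradiction with $q$'s position, etc.). This yields $k_1+k_2-1$ ordered classes. I should double-check the degenerate cases where $G_1$ or $G_2$ is edgeless or a single vertex, but the formula is consistent there.

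\textbf{Lower bound $\fp(G_1\cup G_2)\ge \fp(G_1)+\fp(G_2)-1$.} Given an optimal precedence-consistent solution $(\mathcal V,\sigma)$ for $G_1\cup G_2$ with ordered classes $W_1,\dots,W_k$, restrict it to $V(G_1)$ and to $V(G_2)$. As noted in the preliminaries, restricting an order and partition to an induced subgraph preserves precedence consistency, so $\mathcal V$ restricted to $V(G_i)$ is a precedence-consistent solution for $G_i$. The point is to control how many classes each $G_i$ can ``use.'' A class $W_j$ that contains vertices of both $G_1$ and $G_2$ contributes (at least) one class to each restricted solution; a class containing vertices of only one $G_i$ contributes one class there and zero to the other. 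So if $t$ is the number of classes $W_j$ meeting both $G_1$ and $G_2$, then the restricted solution for $G_i$ has at most (number of classes meeting $G_i$) classes, hence $\fp(G_1)+\fp(G_2)\le (\text{classes meeting }G_1)+(\text{classes meeting }G_2)=k+t$. Thus it suffices to show $t\le 1$: \emph{at most one class of $\mathcal V$ contains vertices of both $G_1$ and $G_2$}.

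\textbf{The main obstacle} is precisely this claim that $t\le 1$, and this is where precedence is essential (it is false without it — indeed $\thin(G_1\cup G_2)=\max$, not a sum). I would argue as follows: suppose $W_a$ and $W_b$ with $a<b$ both contain vertices from $G_1$ and from $G_2$. Because $W_a$ precedes $W_b$ entirely in $\sigma$ and each $W_j$ is consecutive, pick $p\in W_a\cap V(G_i)$ and $q\in W_b\cap V(G_i)$ on the same side, say $G_1$; then $p<q$. Now take any vertex $r\in W_b\cap V(G_2)$ (wrong side). We have $q<$ or $>r$? Since $W_b$ is consecutive and contains both, and everything in $W_a$ precedes everything in $W_b$, we can choose within $W_b$; the idea is to find a vertex $r$ of $G_2$ lying \emph{after} some $G_1$-vertex of $W_a$, together with the fact that the $G_1$-vertices of $W_a\cup W_b$ form a non-trivial set straddled by a $G_2$-vertex, contradicting consistency restricted to $G_1\cup\{r\}$ (there is no edge from any $G_1$-vertex to $r$, so the triple $(p,q',r)$ with $p,q'$ in $G_1$ in the same class and $p<q'<r$ is only a problem if $(p,r)\in E$, which it is not — so I must instead use that the $G_2$-vertices cannot be ``interleaved'' with two different $G_1$-classes). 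The clean way: among all $G_2$-vertices appearing in any class that also contains a $G_1$-vertex, consider the $\sigma$-smallest one $r$ and the $\sigma$-largest one $r'$; if $t\ge 2$ there is a $G_1$-vertex strictly between two such classes in the class-order, forcing a $G_1$-vertex $s$ with $r<s<r'$ while $s$ lies in a different class from the class of $r$ or of $r'$ — then the triple formed by $r$, $r'$ (same side, need them in the same class — so actually compare with the $G_1$ side instead) gives the contradiction via Condition-type reasoning on consecutiveness. I expect the cleanest writeup picks the two ``mixed'' classes, notes one precedes the other, and derives that a single $G_2$-vertex $r$ in the later mixed class has a $G_1$-vertex before it (in the earlier mixed class) and a $G_1$-vertex after it or in a later class, then uses that the class of $r$ must be consecutive to conclude the $G_1$-part is split in a way that violates the consecutiveness of $N[x]$ for a suitable $G_1$-vertex $x$ adjacent across the split — and since $G_1$ is connected enough along the order this yields a forbidden triple. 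Nailing this interleaving argument precisely is the crux; the rest is bookkeeping.
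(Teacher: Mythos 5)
Your upper bound argument is correct and coincides with the paper's: concatenate the two orders and merge the last class of $G_1$'s partition with the first class of $G_2$'s partition.

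The lower bound, however, has a genuine gap, and it is exactly at the point you flag as ``the crux.'' The claim that at most one class in a given optimal precedence-consistent solution for $G_1 \cup G_2$ is mixed is simply \emph{false}. Here is a counterexample: let $G_1 = 2K_1$ with vertices $\{a,b\}$ and $G_2 = C_4$ with vertices $\{1,2,3,4\}$ and edges $(1,2),(2,3),(3,4),(4,1)$. Then $\fp(G_1)=1$, $\fp(G_2)=2$, so $\fp(G_1\cup G_2)=2$. The order $a,1,3,b,2,4$ with ordered classes $\{a,1,3\}$ and $\{b,2,4\}$ is precedence-consistent (every potential breaking triple $(p,q,r)$ has $(p,r)\notin E$ or $(q,r)\in E$; the only nontrivial checks are $(1,3,2)$ and $(1,3,4)$, which pass), it is optimal, and \emph{both} classes are mixed, so $t=2$. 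With this solution your inequality chain gives only $3 = \fp(G_1)+\fp(G_2)\le k+t = 4$, i.e.\ $k\ge 1$, not the needed $k\ge 2$. No amount of interleaving argument can prove $t\le 1$ because it is not true.

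What the paper does instead, and what your argument is missing, is a \emph{transformation}: given any consistent precedence solution with $k$ classes, one constructs a new consistent precedence solution with the \emph{same} $k$ classes and at most one mixed class. Concretely, take the earliest mixed class $V_i$, split it into $V_i^1 = V_i\cap V(G_1)$ and $V_i^2 = V_i\cap V(G_2)$; if $\last(V_i)\in V(G_1)$ then the vertices of $V_i^2$ have no neighbors in any later class, so one can replace $V_i, V_{i+1}$ by $V_i^1$ and $V_i^2\cup V_{i+1}$ (moving the vertices of $V_i^2$ just past $V_i^1$ in the order), check consistency holds, and observe that the index of the earliest mixed class strictly increases; the symmetric case is analogous. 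Iterating yields a solution with $k$ classes and at most one mixed class, to which your counting argument then applies. In my counterexample this transformation produces the order $1,3,a,b,2,4$ with classes $\{1,3\}$, $\{a,b,2,4\}$, now with $t=1$, and the bound $k\ge 2$ follows. You need this repair step; the static claim you aim for does not hold.
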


\begin{proof}
Let $\sigma_1$ be an order of $V(G_1)$, and let $V_1, \dots, V_{k_1}$ be a precedence partition of $V(G_1)$, consistent with $\sigma_1$, such that $k_1 = \fp(G_1)$. Define $\sigma_2$ and $W_1, \dots, W_{k_2}$ in the same way for $G_2$. It is not difficult to see that $V_1, \dots, V_{k_1} \cup W_1, \dots, W_{k_2}$ is a precedence partition of $V(G_1 \cup G_2)$ into $k_1+k_2-1$ sets, consistent with the order $\sigma_1 \oplus \sigma_2$. So $\fp(G_1 \cup G_2) \leq
\fp(G_1)+\fp(G_2)-1$. 

Now let $\sigma$ be an order of $V(G_1 \cup G_2)$, and let $\mathcal{V} = \{V_1, \dots, V_{k}\}$ be a precedence partition of $V(G_1 \cup G_2)$, consistent with $\sigma$, such that $k = \fp(G_1 \cup G_2)$. We call a class \emph{mixed} if it contains vertices of both $G_1$ and $G_2$.
We will prove that $G_1 \cup G_2$ admits a vertex ordering and a consistent precedence partition into the same number $k$ of classes, such that at most one class is mixed. Moreover, the order restricted to each of $V(G_1)$ and $V(G_2)$ coincides with $\sigma$.

Suppose $\mathcal{V}$ contains more than one mixed class, and let $i$ be the smallest index such that $V_i$ is mixed. Notice that $i < k$. Let $V_i^1 = V_i \cap V(G_1)$ and $V_i^2 = V_i \cap V(G_2)$. Suppose that $\last(V_i)$ belongs to $V(G_1)$. Then, for every $j > i$, the vertices of $V_i^2$ have no neighbors in $V_j$: they have no neighbors in $V(G_1) \cap V_j$ by definition of disjoint union, and they cannot have a neighbor in $V(G_2) \cap V_j$, since such a vertex should be adjacent to $\last(V_i)$ by consistency, and that contradicts the definition of disjoint union. 
We will define a partition $\mathcal{V'}= [\mathcal{V} \setminus \{V_i,V_{i+1}\}] \cup \{V_i^1, V_i^2 \cup V_{i+1}\}$, and the order $\sigma'$ such that 
$v <_{\sigma'} w$ if, and only if, either $v \in V_i^1$ and $w \in V_i^2$, or 
$v <_{\sigma} w$ and it is not the case that $v \in V_i^2$ and $w \in V_i^1$.

Let $x <_{\sigma'} y <_{\sigma'} z$ such that $x$ and $y$ are in the same class of $\mathcal{V'}$, $x$ and $z$ are adjacent. Since $\sigma$ and $\mathcal{V}$ are consistent, if $x <_{\sigma} y <_{\sigma} z$ and $x$ and $y$ are in the same class of $\mathcal{V}$, then $z$ is adjacent to $y$. So, suppose first that $x$ and $y$ are not in the same class of $\mathcal{V}$. Then $x \in V_i^2$ and $y \in V_{i+1}$. This implies that $z \in V_j$ with $j \geq i+1$, but $x$ and $z$ are adjacent, and vertices of $V_i^2$ have no neighbors in $V_j$ with $j > i$, a contradiction. Suppose then that some of the two inequalities do not hold for $\sigma$. By definition of $\sigma'$, this can only happen if one of the vertices belongs to $V_i^1$ and the other one to $V_i^2$. Since $x$ and $y$ are in the same class of $\mathcal{V'}$, it must be $y \in V_i^1$ and $z \in V_i^2$, implying that $x \in V_i^1$. But this contradicts the fact that $x$ and $z$ are adjacent.  
We conclude that $z$ is adjacent to $y$, and that  $\sigma'$ and $\mathcal{V'}$ are consistent.

The case in which $\last(V_i)$ belongs to $V(G_2)$ is symmetric, and we can repeat this procedure until obtaining an ordering and a consistent partition into $k$ classes, such that at most one class is mixed (the repetition stops because the index of the smallest mixed class is strictly increasing after each step). 
If there are $k_1$ classes containing at least one vertex of $G_1$, $k_2$ classes containing at least one vertex of $G_2$, and $k_3$ mixed classes, then 
$k = k_1 + k_2 - k_3$, thus $k_1 + k_2 - k = k_3 \leq 1$. 
Since that ordering and partition restricted to each of $V(G_1)$ and $V(G_2)$ are consistent, $\fp(G_1)+\fp(G_2) \leq k_1+k_2 \leq k+1 = \fp(G_1 \cup G_2)+1$.
\end{proof}

\begin{theorem}[Precedence thinness of join]\label{thm:fp-join}
Let $G_1$ and $G_2$ be graphs. If $G_2$ is complete, then
$\fp(G_1 \vee G_2) = \fp(G_1)$. If neither $G_1$ nor $G_2$ are
complete, then $\fp(G_1 \vee G_2) = \fp(G_1) + \fp(G_2)$.
\end{theorem}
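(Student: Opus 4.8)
The plan is to establish each equality by matching upper and lower bounds, treating separately the case where $G_2$ is complete and the case where neither $G_1$ nor $G_2$ is complete; the skeleton mirrors that of Theorem~\ref{thm:join2} for ordinary thinness, while the union result Theorem~\ref{thm:fp-union} serves mainly as a source of intuition for the merging technique rather than as a black box.

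\textbf{Upper bounds (by construction).} If $G_2$ is complete, I would start from an optimal consistent precedence solution $(V_1,\dots,V_{k_1})$, $\sigma_1$ of $G_1$ with $k_1=\fp(G_1)$, pick any ordering $\sigma_2$ of $V(G_2)$, and use the ordering $\sigma_1\oplus\sigma_2$ together with the precedence partition $(V_1,\dots,V_{k_1-1},V_{k_1}\cup V(G_2))$. This is precedence because $V(G_2)$ sits entirely at the tail, and it is consistent because in a triple $(p,q,r)$ with $p,q$ in a common class and $(p,r)\in E$ either $r\in V(G_2)$, whence $r$ is universal in $G_1\vee G_2$ (adjacent to all of $V(G_1)$ by the join and to all of $V(G_2)$ by completeness), so $(q,r)\in E$; or $r\in V(G_1)$, whence $p,q\in V(G_1)$ as well (since $V(G_1)$ precedes $V(G_2)$) and the triple already respects consistency inside $G_1$. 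Hence $\fp(G_1\vee G_2)\le\fp(G_1)$. When neither factor is complete, I would instead concatenate optimal solutions of $G_1$ and $G_2$: using $\sigma_1\oplus\sigma_2$ with the ordered partition $(V_1,\dots,V_{k_1},W_1,\dots,W_{k_2})$, a triple $(p,q,r)$ with $p,q$ in a common class is either contained in $V(G_1)$, or contained in $V(G_2)$, or has $p,q\in V(G_1)$ and $r\in V(G_2)$ (the remaining option $p,q\in V(G_2)$, $r\in V(G_1)$ is impossible because $V(G_1)$ precedes $V(G_2)$), and consistency holds in every case; hence $\fp(G_1\vee G_2)\le\fp(G_1)+\fp(G_2)$.

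\textbf{Lower bounds.} Since $G_1$ and $G_2$ are induced subgraphs of $G_1\vee G_2$ and $\fp$ is monotone under induced subgraphs (as observed in Section~\ref{sec:pre}), $\fp(G_1\vee G_2)\ge\fp(G_1)$, which together with the construction settles the case where $G_2$ is complete. For the case where neither is complete, let $(\mathcal{V},\sigma)$ be an optimal consistent precedence solution of $G_1\vee G_2$ with $k$ classes; restricting it to $V(G_1)$ (respectively $V(G_2)$) gives a consistent precedence solution of $G_1$ with $a$ classes (respectively of $G_2$ with $b$ classes), where $a$, $b$ count the classes meeting $V(G_1)$, $V(G_2)$. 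Writing $m$ for the number of \emph{mixed} classes (meeting both factors), we have $k=a+b-m$, so it suffices to show $(a-\fp(G_1))+(b-\fp(G_2))\ge m$, i.e.\ that the mixed classes are already paid for by redundancy in the two restricted solutions. The mechanism is a rigidity of mixed classes forced by the join: if a mixed class contains $p\in V(G_1)$ occurring before $q\in V(G_2)$, then applying consistency to the triples $(p,q,r)$ with $r\in V(G_2)$ and $q<_\sigma r$ forces $q$ to be $G_2$-adjacent to every $G_2$-vertex after it, and symmetrically with the roles of the two factors swapped. Combining this with the fact that each of $G_1$, $G_2$ contains a non-adjacent pair (because neither is complete), I would argue that every mixed class produces, in the restriction to $G_1$ or to $G_2$, a class that can be merged into a neighbouring class without breaking consistency or precedence, and that these merges can be carried out simultaneously; performing them yields consistent precedence solutions of $G_1$ and $G_2$ whose class counts sum to at most $k$.

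\textbf{Main obstacle.} The routine parts are the two constructions and the induced-subgraph monotonicity; the real work lies in the mixed-class analysis in the ``neither complete'' case, namely turning the rigidity observation into a precise accounting that assigns to each mixed class a compressible pair of classes in one of the restricted solutions, while ensuring that distinct mixed classes do not compete for the same compression and that each compression preserves both consistency and the precedence property. This is also the step that genuinely uses the hypothesis that neither factor is complete, consistently with the degeneration of the formula to $\fp(G_1\vee G_2)=\fp(G_1)$ when $G_2$ is complete, and it is a more delicate version of the ``at most one mixed class'' reduction from the proof of Theorem~\ref{thm:fp-union}: there the $G_2$-part of a mixed class could be shown to have no neighbours later in the order, whereas here it always retains such neighbours (all later $G_1$-vertices), so that reduction cannot be applied verbatim and the non-completeness of $G_1$ must be brought in to make the merge legal.
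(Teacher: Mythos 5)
Your two upper-bound constructions and the lower bound in the complete-$G_2$ case are correct and coincide with the paper's. For the ``neither complete'' lower bound, however, you explicitly leave the central step unfinished. You reduce the claim to the inequality $(a-\fp(G_1)) + (b-\fp(G_2)) \geq m$, where $a,b,m$ come from a fixed optimal precedence solution of $G_1 \vee G_2$; but since $k = a+b-m$, this inequality is just a restatement of $k \geq \fp(G_1)+\fp(G_2)$, so the ``compression accounting'' you defer to is carrying the entire weight of the theorem. Moreover, the deferred step has a genuine obstacle you yourself flag: merging one compressible class per mixed class requires that the $m$ merges never compete for the same target and can be performed simultaneously while preserving both consistency and the precedence property; nothing in your sketch rules out collisions. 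The paper sidesteps this by never arguing about simultaneous merges at all. It transforms the $(G_1 \vee G_2)$-solution \emph{one step at a time}: take the smallest-index mixed class $V_i$, WLOG $\first(V_i)\in V(G_1)$; then $V_i^2 := V_i\cap V(G_2)$ is forced (by the join and consistency) to be a clique of $G_2$ whose vertices are adjacent to every vertex in every later class; using non-completeness of $G_2$ to supply another class $V_j$ meeting $V(G_2)$, replace $\{V_i,V_j\}$ by $\{V_i^1, V_j\cup V_i^2\}$ and reinsert $V_i^2$ in the order right after $V_j$, check consistency by a short case analysis, and observe that the smallest mixed index strictly increases. This terminates with zero mixed classes, after which the count $k = k_1+k_2 \ge \fp(G_1)+\fp(G_2)$ is immediate. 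The explicit one-step transformation plus a monovariant is exactly what removes the double-counting worry that your global accounting leaves open.

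A smaller point of confusion in your final sentence: you write that ``the non-completeness of $G_1$ must be brought in to make the merge legal,'' but in the case you analyse, where $p \in V(G_1)$ precedes $q \in V(G_2)$ in the mixed class (equivalently $\first(V_i)\in V(G_1)$), it is the non-completeness of \emph{$G_2$} that matters: $V_i^2$ is forced to be a clique of $G_2$, so if $G_2$ were complete it could happen that $V(G_2)\subseteq V_i$ and there would be no other $G_2$-class to merge $V_i^2$ into. Each non-completeness hypothesis is used exactly once, in the subcase where $\first(V_i)$ lies in the opposite factor, and this is precisely why the formula degenerates to $\fp(G_1\vee G_2)=\fp(G_1)$ when $G_2$ is complete.
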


\begin{proof}
The proof is similar to the one of Theorem~\ref{thm:fp-union}. 
Let $\sigma_1$ be an order of $V(G_1)$, and let $V_1, \dots, V_{k_1}$ be a precedence partition of $V(G_1)$, consistent with $\sigma_1$, such that $k_1 = \fp(G_1)$. Define $\sigma_2$ and $W_1, \dots, W_{k_2}$ in the same way for $G_2$. It is not difficult to see that $V_1, \dots, V_{k_1}, W_1, \dots, W_{k_2}$ is a precedence partition of $V(G_1 \vee G_2)$ into $k_1+k_2$ sets, consistent with the order $\sigma_1 \oplus \sigma_2$. So $\fp(G_1 \vee G_2) \leq
\fp(G_1)+\fp(G_2)$. Moreover, if $G_2$ is complete, 
$V_1, \dots, V_{k_1} \cup V(G_2)$ is a precedence partition of $V(G_1 \vee G_2)$ into $k_1$ sets, consistent with the order $\sigma_1 \oplus \sigma_2$.

Now let $\sigma$ be an order of $V(G_1 \vee G_2)$, and let $\mathcal{V} = \{V_1, \dots, V_{k}\}$ be a precedence partition of $V(G_1 \vee G_2)$, consistent with $\sigma$, such that $k = \fp(G_1 \vee G_2)$. We call a class \emph{mixed} if it contains vertices of both $G_1$ and $G_2$, and \emph{uniform} otherwise. 
We will prove that $G_1 \cup G_2$ admits a vertex ordering and a consistent precedence partition into the same number $k$ of classes, such that at most one class is mixed. 
Moreover, if none of $G_1$ and $G_2$ is complete, then no class is mixed. 

Suppose $\mathcal{V}$ contains a mixed class, and let $i$ be the smallest index such that $V_i$ is mixed. Let $V_i^1 = V_i \cap V(G_1)$ and $V_i^2 = V_i \cap V(G_2)$. Suppose that $\first(V_i)$ belongs to $V(G_1)$. Then, $V_i^2$ induces a complete graph and for every $j > i$, the vertices of $V_i^2$ are adjacent to all the vertices of $V_j$: they are adjacent to every vertex in $V(G_1) \cap V_j$ by definition of join, and they cannot have a non-neighbor in $V(G_2) \cap V_j$, since such a vertex should be adjacent to $\first(V_i)$ by definition of join, and that would break consistency. 

If $G_2$ is not a complete graph, then there is another class containing vertices of $G_2$. Let $j$ be the greatest index different from $i$ such that $V_j \cap V(G_2) \neq \emptyset$. If $j > i$, $V_j$ may be either uniform or mixed, while if $j < i$, $V_j$ is uniform. 
Moreover, if $j < i$, the vertices of $V_i^2$ are also adjacent to all the vertices of classes between $V_{j+1}$ and $V_i$ (both included), since those classes contain only vertices of $G_1$. 

We will define a partition $\mathcal{V'}= [\mathcal{V} \setminus \{V_i,V_j\}] \cup \{V_i^1, V_j \cup V_i^2\}$, and the order $\sigma'$ obtained by $\sigma$ by deleting the vertices of $V_i^2$ and reinserting them right after the vertices of $V_j$. 

Let $x <_{\sigma'} y <_{\sigma'} z$ such that $x$ and $y$ are in the same class of $\mathcal{V'}$, $x$ and $z$ are adjacent. Since $\sigma$ and $\mathcal{V}$ are consistent, if $x <_{\sigma} y <_{\sigma} z$ and $x$ and $y$ are in the same class of $\mathcal{V}$, then $z$ is adjacent to $y$. So, suppose first that $x$ and $y$ are not in the same class of $\mathcal{V}$. Then $x \in V_j$ and $y \in V_i^2$. This implies that either $z \in V_i^2$ or $z \in V_\ell$ with $\ell > j$. In either case, $y$ is adjacent to $z$. Suppose now that some of the two inequalities do not hold for $\sigma$. By definition of $\sigma'$, this implies that at least one of the vertices belongs to $V_i^2$. If $y \in V_i^2$, then $y$ is adjacent to $z$, since every vertex of $V_i^2$ is adjacent to every vertex greater than itself, both according to $\sigma$ and according to $\sigma'$. 
If $x \in V_i^2$, since $x$ and $y$ are in the same class of $\mathcal{V'}$, and by the definition of the order and partition, it must be $y \in V_i^2$, thus adjacent to $z$. 
By last, if $z \in V_i^2$, there are two cases. If $z <_{\sigma} y$, then $z$ is adjacent to $y$. If $y <_{\sigma} z$, since we are assuming that some of the inequalities do not hold in $\sigma$, necessarily $y <_{\sigma} x$; and since $x$ and $y$ belong to the same class of $\mathcal{V}'$, it must be that $x \in V_j$ and $y \in V_i^2$. But then, it is clear that $y$ is adjacent to $z$.

The case in which $\first(V_i)$ belongs to $V(G_2)$ is symmetric, and we can repeat this procedure until obtaining an ordering and a consistent partition into $k$ classes, such that at most one class is mixed, and this happens only in the case in which one of the graphs is complete (the repetition of the procedure stops because the index of the smallest mixed class is strictly increasing after each step). 

Suppose that at the end of the process there are $k_1$ classes containing at least one vertex of $G_1$ and $k_2$ classes containing at least one vertex of $G_2$.
In the case in which we have a mixed class, 
$k = k_1 + k_2 - 1$, thus $k_1 + k_2 = k+1$. 
Since that ordering and partition restricted to each of $V(G_1)$ and $V(G_2)$ are consistent, $\fp(G_1)+\fp(G_2) \leq k_1+k_2 = k+1 = \fp(G_1 \cup G_2)+1$.
If none of the graphs is complete, we have no mixed class at the end of the process, so  
$k = k_1 + k_2$. 
Since that ordering and partition restricted to each of $V(G_1)$ and $V(G_2)$ are consistent, $\fp(G_1)+\fp(G_2) \leq k_1+k_2 = k = \fp(G_1 \cup G_2)$.
\end{proof}

\begin{corollary}
\label{thm:fp-cographs}
The precedence thinness of cographs can be computed in polynomial time. 
\end{corollary}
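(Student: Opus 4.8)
The plan is to exploit the recursive structure of cographs together with Theorems~\ref{thm:fp-union} and~\ref{thm:fp-join}. Every cograph $G$ admits a \emph{cotree}: a rooted tree whose leaves are the vertices of $G$, whose internal nodes are labelled $\cup$ or $\vee$ with labels alternating along every root-to-leaf path, each internal node having at least two children, and such that the graph associated with a node is the union (resp.\ join) of the graphs associated with its children~\cite{CorneilLerchsStewart81}. This cotree can be computed in polynomial (indeed linear) time. First I would compute the cotree, and then evaluate, bottom-up, two quantities at each node $t$: the value $\fp(G_t)$ of the associated cograph $G_t$, and a Boolean flag recording whether $G_t$ is a complete graph.

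At a leaf, $G_t = K_1$, so $\fp(G_t)=1$ and the flag is true. At a $\cup$-node with children $t_1,\dots,t_s$, the graph $G_t$ is never complete (two non-empty parts create a non-edge), and a straightforward induction on $s$ using the associativity of $\cup$ and Theorem~\ref{thm:fp-union} gives $\fp(G_t)=\sum_{i=1}^{s}\fp(G_{t_i})-(s-1)$. At a $\vee$-node with children $t_1,\dots,t_s$, let $S=\{\,i : G_{t_i}\text{ is not complete}\,\}$. If $S=\emptyset$ then $G_t$ is complete and $\fp(G_t)=1$; otherwise $G_t$ is not complete and I claim $\fp(G_t)=\sum_{i\in S}\fp(G_{t_i})$. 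Writing $G_t$ as the join of $\bigvee_{i\in S}G_{t_i}$ with $\bigvee_{i\notin S}G_{t_i}$ (the latter complete or empty), the first case of Theorem~\ref{thm:fp-join} shows that joining with a complete graph does not change the precedence thinness, so it suffices to evaluate $\fp\big(\bigvee_{i\in S}G_{t_i}\big)$; and since a join involving a non-complete factor is itself non-complete, an induction using the associativity of $\vee$ and the second case of Theorem~\ref{thm:fp-join} yields $\fp\big(\bigvee_{i\in S}G_{t_i}\big)=\sum_{i\in S}\fp(G_{t_i})$. The value of $\fp(G)$ is then read off at the root, and each value at a node depends only on the values at its children, so the recursion is well defined.

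The step that I expect to require the most care is the $\vee$-node rule: the binary identities of Theorem~\ref{thm:fp-join} must be lifted to an arbitrary number of children in which complete and non-complete graphs are intermixed. This is handled by the associativity of $\vee$ together with two elementary observations — a join of complete graphs is complete, whereas a join in which some factor is non-complete is non-complete — which also justify that the Boolean flag propagates correctly ($\cup$-nodes are non-complete; a $\vee$-node is complete iff all its children are). Finally, the cotree has $O(|V(G)|)$ nodes and the work at each node is linear in its number of children, so the overall procedure runs in polynomial (in fact linear) time, proving the claim.
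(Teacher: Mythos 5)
Your proposal is correct and follows exactly the route the paper intends: the corollary is stated as an immediate consequence of Theorems~\ref{thm:fp-union} and~\ref{thm:fp-join}, and your bottom-up dynamic programming on the cotree (maintaining the precedence thinness together with a completeness flag, handling $\cup$-nodes via Theorem~\ref{thm:fp-union} and $\vee$-nodes by splitting children into complete and non-complete factors before applying the two cases of Theorem~\ref{thm:fp-join}) is the natural and rigorous way to spell this out.
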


The thinness of the complement of an interval graph can be arbitrarily large, $\overline{tK_2}$ being an example of it, but it can be also equal to one, as for example in the case of complete graphs or edgeless graphs. So, we cannot express the thinness of a graph in terms of the thinness of its complement. However, one can aim to relate the complete thinness of a graph in terms of the independent thinness of its complement, and viceversa. As we will show next, this is not possible for the regular complete/independent thinness or proper thinness (Proposition~\ref{no-rel}), but a relation exists for the precedence versions of complete and independent thinness. 


\begin{proposition}\label{no-rel}
For a matching $nK_2$, $n\geq 2$, it holds that $\indpthin(nK_2) = 2$, while $\compthin(\overline{nK_2}) = n$. For the crown $CR_n$, $n \geq 2$, it holds that $\comppthin(\overline{CR_n}) = 2$, while $\indthin(CR_n) = n$.     
\end{proposition}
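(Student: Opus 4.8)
The plan is to verify the four numerical claims in Proposition~\ref{no-rel} separately, treating the matching and the crown in turn, and in each case combining a construction (for the upper bound) with an argument using the structural results already established. For the lower bounds involving $\compthin$ or $\compfp$ on specific graphs I would invoke the basic observation $f(H)\le f(G)$ for induced subgraphs together with the arguments of the type used in the proof of Theorem~\ref{comp-thin-crn} (counting classes of size two under Condition~\ref{cond2:crn:thin}).

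\medskip

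\textbf{The matching $nK_2$.} For $\indpthin(nK_2)=2$: since $nK_2$ is edgeless-plus-independent-edges, every class in any partition into independent sets of an edgeless-on-classes ordering can hold one endpoint of each of the $n$ edges, so we may take two classes $V_1,V_2$, put one endpoint of each $K_2$ in $V_1$ and the other in $V_2$, and order them as $a_1,\dots,a_n,b_n,\dots,b_1$ where $a_ib_i$ are the edges; one checks directly that this order (and its reversal) is strongly consistent and respects precedence, so $\indpthin(nK_2)\le 2$, and it is clearly not $1$ since $\overline{nK_2}$ restricted to an edge's complement is not an interval bigraph argument — more simply, $nK_2$ with $n\ge 2$ is not a proper interval bigraph-style $1$-thin object because a single independent class cannot contain two adjacent vertices. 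For $\compthin(\overline{nK_2})=n$: here $\overline{nK_2}=K_{n\times 2}$ is the cocktail-party graph, in a partition into cliques each clique misses at most one pair (the non-edges are exactly the $n$ matching pairs), so cliques have size $\le n$ and... rather, the cleaner route is to note $\overline{nK_2}$ contains $\overline{nK_2}$ — I would show directly that any partition into cliques of a consistent solution needs $n$ classes by the same size-two counting argument as in Theorem~\ref{comp-thin-crn}, and exhibit a consistent solution with $n$ cliques (pairing up one vertex from each matching pair appropriately), the point being that a clique in $\overline{nK_2}$ is a transversal of the matching and two such transversals sharing structure violate the consistency triple.

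\medskip

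\textbf{The crown $CR_n$.} The claim $\indthin(CR_n)=n$ is exactly Theorem~\ref{thm:crn:indthin}, so nothing new is needed there. The remaining claim is $\comppthin(\overline{CR_n})=2$ for $n\ge 2$. Observe that $\overline{CR_n}=\overline{K_{n,n}\setminus\mathcal M}$: since $\overline{K_{n,n}}$ is $K_n\cup K_n$ (two disjoint cliques, one on each side $A_n,A'_n$) and adding back the matching $\mathcal M$ joins $v_i$ to $v'_i$, we get $\overline{CR_n}$ is two disjoint $n$-cliques plus the perfect matching $\{v_iv'_i\}$ — equivalently, $n$ disjoint copies of... no: it is connected, being two cliques linked by a matching. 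For a complete proper $2$-thin representation I would take the partition $\mathcal V=(A_n,A'_n)$ — each part is a clique in $\overline{CR_n}$ — and order the vertices as $v_1,v'_1,v_2,v'_2,\dots,v_n,v'_n$ (interleaving each matched pair). One then checks strong consistency via Lemma~\ref{prop:neigh:propthin}: within the clique $A_n$ every closed neighborhood is all of $A_n$ (consecutive trivially), and $N[v_i]\cap A'_n=\{v'_i\}$ which is a single element (consecutive trivially), and symmetrically for $A'_n$; so the set $N[v]\cap(V\cup\{v\})$ is consecutive for every $v$ and every part, giving strong consistency, hence $\comppthin(\overline{CR_n})\le 2$. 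It is not $1$ because $\overline{CR_n}$ is not a proper interval graph: it contains an induced $2K_2$-complement-ish obstruction — concretely for $n\ge 2$ the four vertices $v_1,v_2,v'_1,v'_2$ induce $C_4$ (a $K_{2,2}$ on sides would be... ) — I would pin down the precise forbidden induced subgraph ($C_4$ for $n=2$, and for larger $n$ an induced net or similar) to rule out thinness one.

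\medskip

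\textbf{Main obstacle.} The delicate point is the two lower bounds $\compthin(\overline{nK_2})\ge n$ and its analogue, because unlike the crown case there is no pre-packaged characterization lemma; I expect to re-run the class-counting argument of Theorem~\ref{comp-thin-crn} (cliques of size two behave like edges of a matching-transversal and the consistency triple forces almost all of them into a "reversed" configuration, then a third such clique yields a forbidden triple $v'_j<v_i<v_k$ with distinct indices). Getting the off-by-constant bookkeeping exactly right — showing it is $n$ and not $n-1$ or $n+1$ — is where care is needed, and it is the same bookkeeping already carried out in Theorem~\ref{comp-thin-crn}, so I would model the write-up on that proof.
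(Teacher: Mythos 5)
There is a concrete error in your construction for $\indpthin(nK_2)\le 2$. You partition into $V_1=\{a_1,\dots,a_n\}$ and $V_2=\{b_1,\dots,b_n\}$ and propose the order $a_1,\dots,a_n,b_n,\dots,b_1$; but this is not even consistent, let alone strongly consistent. Take the triple $a_1 < a_2 < b_1$ (for $n\ge 2$): $a_1,a_2\in V_1$, $(a_1,b_1)\in E$, yet $(a_2,b_1)\notin E$, so the triple breaks consistency. The paper avoids this by \emph{interleaving}: the order $a_1,b_1,a_2,b_2,\dots,a_n,b_n$ with the same two-part partition is strongly consistent (for any $p<q<r$ with $p,q$ in the same class and $p$ adjacent to $r$, the only candidate $r$ for $p=a_i$ or $p=b_i$ is $b_i$ or $a_i$ respectively, which cannot sit to the right of $q$). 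Your verbal justification ``every class... can hold one endpoint of each of the $n$ edges'' is right for the partition but the order has to be interlaced; the block order you wrote does not work and would invalidate that half of the proposition.

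Your approach to $\compthin(\overline{nK_2})\ge n$ is also on the wrong track. You propose to re-run the size-two class-counting argument of Theorem~\ref{comp-thin-crn}, but that argument relies on Condition~\ref{cond2:crn:thin}, which is a characterization of consistency specifically for crown graphs; $\overline{nK_2}$ (the cocktail-party graph) is not a crown and would need its own structural lemma, which you have not supplied, and which is likely not to produce the bound $n$ cleanly --- you acknowledge the bookkeeping is unclear. The paper takes a much shorter route: it simply invokes the already-known fact that $\thin(\overline{nK_2})=n$, and since any consistent partition into cliques is in particular a consistent partition, $\compthin(G)\ge\thin(G)$ gives the lower bound immediately; the remaining work is only exhibiting a consistent clique partition with $n$ classes. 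You should replace your re-derivation plan with this two-line argument.

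Finally, your lower bound for $\comppthin(\overline{CR_n})\ge 2$ is over-engineered: you do not need to identify forbidden induced subgraphs for proper interval graphs. If $\comppthin(G)=1$ the single class must be a complete set, so $G$ is a complete graph; since $\overline{CR_n}$ is not complete for $n\ge 2$ (e.g.\ $v_1$ and $v'_1$ are nonadjacent), the bound is immediate. The same one-liner (``$V(G)$ is not independent'') gives $\indpthin(nK_2)\ge 2$. Your construction and Lemma~\ref{prop:neigh:propthin}-based verification for $\comppthin(\overline{CR_n})\le 2$ is correct and matches the paper, and citing Theorem~\ref{thm:crn:indthin} for $\indthin(CR_n)=n$ is exactly what the paper does.
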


\begin{proof} Let $G=nK_2$, $n \geq 2$, and $V(G)=V \cup W$, where $V=\{v_1, \dots, v_n\}$, $W=\{w_1,\dots, w_n\}$, such that the only edges of $G$ are $(v_i,w_i)$ for $1 \leq i \leq n$. Then it is easy to verify that the partition $\{V,W\}$ and the ordering $v_1,w_1,v_2,w_2, \dots, v_n,w_n$ form a strongly consistent solution, and both $V$ and $W$ are independent sets. On the other hand, $\indpthin(nK_2) \geq 2$, since $V(G)$ is not an independent set. As for $\overline{G}$, it is known that $\thin(\overline{nK_2}) = n$ (e.g.~\cite{C-M-O-thinness-man,BGOSS-thin-oper}), and it is not hard to verify that a consistent solution where the classes are complete sets is given by the vertex order $v_1 v_2 w_1 [v_i w_{i-1} v_{i+1} w_i]_{2 < i \leq n-1} w_n$ and the partition $\{\{v_1,w_n\}\} \cup \bigcup_{2 \leq i \leq n}\{\{v_i,w_{i-1}\}\}$. Indeed, the solution is also strongly consistent.  

The equality $\indthin(CR_n) = n$ is proved in Theorem~\ref{thm:crn:indthin}. Now let $G=\overline{CR_n}$, $n \geq 2$, let $V(G)=V \cup W$, where $V=\{v_1, \dots, v_n\}$, $W=\{w_1,\dots, w_n\}$, such that $V$ and $W$ are complete sets and the only edges of $G$ joining vertices of $V$ and $W$ are $(v_i,w_i)$ for $1 \leq i \leq n$. Then it is easy to verify that the partition $\{V,W\}$ and the ordering $v_1,w_1,v_2,w_2, \dots, v_n,w_n$ form a strongly consistent solution, and both $V$ and $W$ are complete sets. On the other hand, $\comppthin(\overline{CR_n}) \geq 2$, since $V(G)$ is not a complete graph.
\end{proof}

\begin{theorem}\label{thm:fp-complement}
Let $G$ be a graph. Then $\indfp(G) = \compfp(\overline{G})$, and 
$\indfpp(G) = \compfpp(\overline{G})$.
\end{theorem}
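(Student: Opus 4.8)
The plan is to establish the two identities by showing that any consistent precedence solution for one parameter can be transformed into a consistent precedence solution for the other, with the same number of classes, by passing to the complement graph and keeping (essentially) the same vertex ordering. The crucial structural observation is that the definition of consistency for a triple $(p,q,r)$ with $p,q$ in the same class involves the \emph{presence} of the edge $(p,r)$ and the \emph{absence} of the edge $(q,r)$; replacing $G$ by $\overline{G}$ swaps ``edge'' and ``non-edge'', so a triple that breaks consistency in $G$ breaks it in $\overline{G}$ only after swapping the roles of $p$ and $r$. This asymmetry is exactly why the statement pairs \emph{independent} with \emph{complete}: a class that is independent in $G$ is a clique in $\overline{G}$, and the ordering must be reversed to turn the $(p,q,r)$-obstruction for one parameter into the $(p,q,r)$-obstruction for the other.

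First I would make precise the duality at the level of a single triple. Fix an ordering $<$ and a partition $\mathcal{V}$. I claim that $(\mathcal{V},<)$ is consistent for $G$ with every class independent if and only if $(\mathcal{V},\overline{<})$ is consistent for $\overline{G}$ with every class complete, where $\overline{<}$ is the reversal. The ``complete vs.\ independent'' part is immediate since complementation exchanges cliques and independent sets. For consistency: suppose $(r,s,t)$ breaks the consistency of $(\mathcal{V},\overline{<})$ in $\overline{G}$, i.e.\ $r \mathbin{\overline{<}} s \mathbin{\overline{<}} t$, $r,s$ in one class, $(r,t)\in E(\overline{G})$, $(s,t)\notin E(\overline{G})$. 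Reversing, $t < s < r$; then $(t,s)\notin E(\overline{G})$ means $(t,s)\in E(G)$, and $(r,t)\in E(\overline{G})$ means $(r,t)\notin E(G)$; since $r,s$ share a class, the triple $(t,s,r)$ with $t<s<r$ breaks consistency in $G$ — here I would note that the roles of ``first'' and ``last'' vertex of the triple are exchanged, which is why the reversal is needed. The converse is symmetric. Since reversing the order leaves the precedence property (each class consecutive in the order) intact, and since a partition into cliques/independent sets stays one under reversal, this equivalence respects all the constraints defining $\indfp$ and $\compfp$. Hence an optimal $\indfp$-solution for $G$ yields a $\compfp$-solution for $\overline{G}$ with the same number of classes and vice versa, giving $\indfp(G) = \compfp(\overline{G})$.

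For the proper versions I would run the same argument, additionally invoking the symmetry built into strong consistency: $(\mathcal{V},<)$ is strongly consistent iff both $<$ and $\overline{<}$ are consistent, so the condition is invariant under reversal, and the triple-level duality above applies simultaneously to $<$ and $\overline{<}$. Concretely, $(\mathcal{V},<)$ is a strongly consistent precedence solution for $G$ with independent classes iff $(\mathcal{V},\overline{<})$ — equivalently $(\mathcal{V},<)$ itself, since strong consistency is reversal-invariant — is a strongly consistent precedence solution for $\overline{G}$ with complete classes. This gives $\indfpp(G) = \compfpp(\overline{G})$.

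I do not anticipate a serious obstacle; the only point requiring care is bookkeeping the reversal correctly — one must resist the temptation to conclude $\indfp(G) = \compfp(\overline{G})$ using the \emph{same} ordering rather than its reversal, since the triple obstruction genuinely flips orientation under complementation. A secondary point worth a sentence is that the precedence (consecutiveness) requirement and the clique/independence requirement are both manifestly invariant under order reversal, so no new classes are ever introduced in the transformation; this is what makes the equality exact rather than merely an inequality. (One could also remark that no analogous identity holds without the precedence restriction, which is exactly the content of Proposition~\ref{no-rel}.)
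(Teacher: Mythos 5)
The core transformation you propose—reverse the \emph{entire} order $<$ to $\overline{<}$—does not preserve consistency, and the step where you claim otherwise is the gap. After reversal you derive (with your labels) $t < s < r$, with $s,r$ in the same class, $(t,s)\in E(G)$, $(t,r)\notin E(G)$. But by the paper's definition a triple breaks consistency only when the \emph{first two} vertices share a class and the first is adjacent to the last while the middle is not; here it is the \emph{last two} that share a class, and the adjacency pattern is the one that says $t$ sees the earlier class-member but not the later one. That pattern is permitted both by consistency and by strong consistency, so no contradiction arises. Concretely: take $V(G)=\{a,b,c,d\}$, $E(G)=\{(b,c)\}$, classes $V_1=\{a,b\}$, $V_2=\{c,d\}$, and $\sigma=(a,b,c,d)$. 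This is a strongly consistent precedence solution for $G$ with independent classes, but in $\overline{G}$ with $\overline{\sigma}=(d,c,b,a)$ the triple $(d,c,b)$ breaks consistency: $d,c\in V_2$, $(d,b)\in E(\overline{G})$ yet $(c,b)\notin E(\overline{G})$. Your claimed ``if and only if'' is therefore false, for both the plain and proper versions.

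What the paper does instead—and what actually works—is to keep the precedence order of the classes $V_1,\dots,V_k$ \emph{unchanged} and reverse only the internal order \emph{within} each class. With that order, a problematic triple $v<w<z$ with $v,w\in V_i$ and $z$ in a later class $V_j$ ($j>i$) becomes, under the original $\sigma$, the triple $w<_\sigma v<_\sigma z$ whose first two are in the same class; this is exactly the configuration consistency of $\sigma$ in $G$ controls, and the implication goes through after complementing the edges. The precedence constraint is essential here: it guarantees $z$ lies in a later class (hence after both $v$ and $w$ in $\sigma$), so the triple is genuinely of the right shape. For the proper version, the paper applies the same within-class reversal and additionally invokes strong consistency of $\sigma$ when the class-pair consists of $w,z$ rather than $v,w$. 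Your intuition that some reversal is needed and that precedence is what makes the identity exact (rather than merely an inequality) is correct, but the reversal must be local to each class, not global.
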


\begin{proof}
Let $G$ be a graph and $\sigma$ be an order of $V(G)$. Let $V_1, \dots, V_k$ be a precedence partition of 
$V(G)$ into independent sets which is consistent (respectively strongly consistent) with $\sigma$ (i.e., $v <_{\sigma} w$ for $v \in V_i$ and $w \in V_j$ with $1 \leq i < j \leq k$). 

Consider now the same ordered partition $V_1, \dots, V_k$ in $\overline{G}$ (now into complete sets), but where we order internally the vertices of each part according to $\overline \sigma$. We will prove that the partition and the order are consistent (respectively strongly consistent) for $\overline{G}$, implying that $\indfp(G) \geq \compfp(\overline{G})$, and \linebreak
$\indfpp(G) \geq \compfpp(\overline{G})$. 

The (strong) consistency cannot be broken by a triple of vertices of the same class, since each class is a complete set. So let $v < w < z$ according to the order defined for $\overline{G}$, such that $v$ and $w$ are in the same class $V_i$ of the partition, $z$ belongs to a different class, and $(v,z) \in E(\overline{G})$. By the definition of the order, $z \in V_j$ with $j > i$, and $w <_{\sigma} v <_{\sigma} z$. If $(w,z) \not \in E(\overline{G})$, then $(w,z) \in E(G)$, and by consistency of $\sigma$ and $V_1, \dots, V_k$ in $G$, $(v,z) \in E(G)$, a contradiction. Thus, $(w,z) \in E(\overline{G})$. Similarly, if $\sigma$ and $V_1, \dots, V_k$  were strongly consistent, let $v < w < z$ according to the order defined for $\overline{G}$, such that $w$ and $z$ are in the same class $V_j$ of the partition, $v$ belongs to a different class, and $(v,z) \in E(\overline{G})$. By the definition of the order, $v \in V_i$ with $i < j$, and $v <_{\sigma} z <_{\sigma} w$. If $(v,w) \not \in E(\overline{G})$, then $(v,w) \in E(G)$, and by strong consistency of $\sigma$ and $V_1, \dots, V_k$ in $G$, $(v,z) \in E(G)$, a contradiction. Thus, $(v,w) \in E(\overline{G})$.

To prove the converse inequalities, it is enough to observe that the same argument, applied to $\overline{G}$ and exchanging independent sets by complete sets, proves that $\indfp(G) \leq \compfp(\overline{G})$, and 
$\indfpp(G) \leq \compfpp(\overline{G})$.  
\end{proof}

\section{Bounds for the thinness of grids} \label{sec:grid}

In this section, we show that the linear decomposition  in~\cite{VatshelleThesis} for mim-width (a width parameter with several algorithmic applications), leads to a consistent ordering and partition of $V(GR_n)$ into $\left \lceil \frac{n+1}{2} \right \rceil$ classes, thus improving the previously known upper
bound for the thinness of the grid.


\begin{theorem}[Bound for the thinness of $GR_{n,m}$]
    \label{thinness-GRmn}
    For $1 \leq n \leq m$, $\left \lceil \frac{n-1}{3} \right \rceil \leq \thin(GR_{n,m}) \leq  \left \lceil \frac{n+1}{2} \right \rceil$.
\end{theorem}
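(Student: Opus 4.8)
The plan is to establish the two bounds separately. For the lower bound $\left\lceil \frac{n-1}{3}\right\rceil \le \thin(GR_{n,m})$, I would argue that the $n\times m$ grid contains, as an induced subgraph, a structure whose thinness is known to be at least $\left\lceil \frac{n-1}{3}\right\rceil$; given the observation in the preliminaries that $f(H)\le f(G)$ for induced subgraphs, it suffices to exhibit such an $H$. The natural candidate is to use the literature bound $\thin(GR_k)\ge k/4$ from \cite{M-O-R-C-thinness} sharpened to $\left\lceil\frac{k-1}{3}\right\rceil$ for square grids, or more directly to prove the square-grid lower bound here (since $GR_n$ is an induced subgraph of $GR_{n,m}$ when $n\le m$, reducing the general case to the square case). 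The cleanest route is probably a direct combinatorial argument: in any consistent solution $(\mathcal V,\sigma)$ for $GR_n$, consider for each column (or suitably chosen set of $3$ consecutive columns) the ``middle'' vertices and show, via the forbidden triple condition, that vertices from sufficiently separated columns cannot share a class — pinning down a packing argument that forces $\left\lceil\frac{n-1}{3}\right\rceil$ classes. I would set up an auxiliary graph on the columns where two columns are joined when their representative vertices are incompatible, and show its independence number is bounded, forcing many classes.

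For the upper bound $\thin(GR_{n,m}) \le \left\lceil\frac{n+1}{2}\right\rceil$, the plan is exactly as the section's preamble suggests: take the linear layout of $GR_{n,m}$ underlying Vatshelle's mim-width decomposition in \cite{VatshelleThesis}, and convert it into an explicit ordering $\sigma$ and partition $\mathcal V$ of $V(GR_{n,m})$ with $\left\lceil\frac{n+1}{2}\right\rceil$ parts. Concretely I would order the vertices column by column (with $n\le m$ so that $n$ is the ``short'' dimension), and within the column-by-column sweep assign a vertex $(i,j)$ to class roughly $\lceil i/2\rceil$ or a similar pairing of rows, so that each class is a union of two adjacent rows. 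The verification amounts to checking the forbidden-triple condition: given $p<q<r$ in the same class with $p\sim r$, show $q\sim r$. Because $p,q$ lie in the same pair of rows and the order proceeds column-by-column, adjacency of $p$ to a later vertex $r$ forces $r$ to be in the next column (or same column) near those two rows, and then $q$ — sandwiched between them — is also adjacent to $r$. This is a finite case check over the relative positions of $p,q,r$ within at most two adjacent columns and two adjacent rows.

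The main obstacle will be the lower bound. The upper bound is essentially a construction-plus-verification that is routine once the layout is written down, but proving $\thin(GR_n)\ge\left\lceil\frac{n-1}{3}\right\rceil$ requires a genuinely new packing/charging argument improving on the $n/4$ bound from \cite{M-O-R-C-thinness}; the delicate point is controlling how the linear order $\sigma$ can interleave vertices from different columns, since a clever order might let distant columns cooperate in a single class unless one exploits the $2$-dimensional adjacency structure carefully. I expect the heart of the argument to be: fix the order $\sigma$; for each class $V\in\mathcal V$ and each ``band'' of three consecutive columns, show $V$ meets the band in a restricted way (e.g.\ contributes vertices spanning a bounded number of rows), then count. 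I would organize the write-up so that the lower bound is proved first as a standalone lemma about $GR_n$, then transferred to $GR_{n,m}$ by the induced-subgraph observation, and finally combined with the layout-based upper bound to conclude the theorem.
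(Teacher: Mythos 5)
Both halves of your proposal have genuine gaps, and the upper bound one is fatal as written.

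For the upper bound, you propose classes that are ``a union of two adjacent rows'' (assign $(i,j)$ to class $\lceil i/2\rceil$) with a column-by-column sweep, and you assert the consistency check is a routine finite case analysis. That case analysis in fact fails. Take $GR_{4,3}$, classes $V_1=\{(1,\cdot),(2,\cdot)\}$, $V_2=\{(3,\cdot),(4,\cdot)\}$, order $(1,1),(2,1),(3,1),(4,1),(1,2),\dots$ column by column. Then $p=(1,1)$, $q=(2,1)$, $r=(1,2)$ satisfies $p<q<r$, $p,q\in V_1$, $(p,r)\in E$, but $(q,r)\notin E$ — consistency is broken. The obstruction is structural: if $p=(i,j)$ and $q=(i',j)$ share a class, then $p$'s horizontal neighbor $(i,j+1)$ and $q$'s horizontal neighbor $(i',j+1)$ are distinct, so one of $p,q$ always has a later neighbor that the other misses; no within-column reordering repairs this. (Note also that your count of $\lceil n/2\rceil$ classes would be strictly better than the theorem's $\lceil(n+1)/2\rceil$, which should have raised a flag.) The paper's construction instead uses a zigzag partition: each class contains one full odd row together with the vertices of the two neighboring even rows in columns of a fixed parity, and the vertex order interleaves two consecutive classes in a specific eight-vertex repeating pattern. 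This asymmetric, parity-dependent split of the even rows is precisely what makes the forbidden-triple condition hold, and it is not a cosmetic variant of ``two rows per class.''

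For the lower bound, you correctly identify it as the hard part but do not supply an argument: the proposal is a plan to ``set up an auxiliary graph on the columns'' and ``show its independence number is bounded,'' without saying how to control an arbitrary interleaving order $\sigma$. The paper's proof goes through an external ingredient you do not mention, namely Jel\'{\i}nek's theorem that for \emph{every} linear order of $V(GR_{n,m})$ there is a prefix/suffix cut whose crossing bipartite graph has a matching of size $n-1$. Given such a matching, consistency forces that if a class contains four matched vertices $w<x<y<z$ on the prefix side, then $w',x'$ (their partners, all on the suffix side) are adjacent to several of $x,y,z$, producing two $4$-cycles sharing two edges, which cannot occur in a grid; hence each class meets the matched prefix vertices in at most $3$ vertices, giving $k\ge\lceil (n-1)/3\rceil$. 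Without the matching theorem (or an equivalent cut-width/rank-width type lemma), the ``packing on bands of three columns'' idea has no handle on how $\sigma$ mixes columns, and I don't see how to complete it.
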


\begin{proof}
To prove the upper bound, we will define a vertex partition and consistent ordering as follows. 
The first class $V_1$ consists of the union of vertices $(1,j)$, $1 \leq j \leq m$ and the vertices $(2,j)$, $1 \leq j \leq m$, $j$ odd. For $2 \leq i \leq \frac{n}{2}$, the $i$-th class $V_i$ consists of the union of vertices $(2i-1,j)$, $1 \leq j \leq m$ and the vertices $(2i-2,j)$ and $(2i,j)$, $1 \leq j \leq m$, $j \equiv i \mod 2$. The last class $V_i$, $i = \left \lceil \frac{n+1}{2} \right \rceil$, is composed by 
the union of the vertices $(2i-2,j)$, $1 \leq j \leq m$, $j \equiv i \mod 2$, and the vertices $(n,j)$, $1 \leq j \leq m$, in the case in which $n$ is odd. Examples for $n=7$ and $n=6$ can be seen in Figure~\ref{fig:grid}, where the bold edges are the ones that are internal to a class. 

Each class induces an interval graph. We will define first an internal ordering for each class, that satisfies consistency (i.e., a canonical ordering). For the first class, $(x,j) < (x',j')$ if $j < j'$, and $(2,j) < (1,j)$ for $1 \leq j \leq m$, $j$ odd. For the last class, $(x,j) < (x',j')$ if $j < j'$, and $(n-1,j) < (n,j)$ for the corresponding values of $j$, when $n$ is odd. For the $i$-th class, $2 \leq i \leq \frac{n}{2}$, $(x,j) < (x',j')$ if $j < j'$, and $(2i,j) < (2i-2,j) < (2i-1,j)$ for $2 \leq i \leq \frac{n}{2}$, $1 \leq j \leq m$, $j \equiv i \mod 2$. 

There are edges joining $V_i$ and $V_j$ if and only if $|i-j|=1$. So, it is enough to prove that we can combine the defined orderings for two consecutive classes into a single one, consistent with the graph induced by the union of the two classes. In this way, we can insert class by class the vertices, obtaining a total ordering of the vertices of the grid which is consistent with the defined partition. 

Assuming that vertex $(2i,j)$ belongs to $V_{i+1}$, the joint ordering for $V_i \cup V_{i+1}$ follows the pattern 
$(2i+2,j)$, $(2i,j)$, $(2i-2,j-1)$, $(2i-1,j-1)$, $(2i-1,j)$, $(2i,j+1)$, $(2i+1,j)$, $(2i+1,j+1)$, and repeating from $(2i+2,j+2)$ (restricted to the vertices that actually do exist). 
This ordering is illustrated bottom-up in the right part of Figure~\ref{fig:grid}, where the bold edges are the ones that are internal to a class, the horizontal dotted lines are drawn to help in visualizing the vertex ordering, and the vertices in the described pattern are shaded.   

Let us argue consistency (that can be verified also in the figure). Vertices in column $2i-2$ do not have neighbors in $V_{i+1}$, and vertices in column $2i+2$ do not have neighbors in $V_i$. Recall that we are assuming that vertex $(2i,j)$ belongs to $V_{i+1}$. 
So the vertex $(2i-1,j-1)$ has no neighbors in $V_{i+1}$, and the vertex $(2i+1,j)$ has no neighbors in $V_i$. 

The only neighbor of   
$(2i-1,j)$ in $V_{i+1}$ is $(2i,j)$, and the only vertices between those two in the ordering are $(2i-2,j-1)$ and $(2i-1,j-1)$, both belonging to $V_i$. 
The only neighbor of   
$(2i+1,j+1)$ in $V_i$ is $(2i,j+1)$, and the only vertex in between in the ordering is $(2i+1,j)$, belonging to $V_{i+1}$. 

The only neighbor of   
$(2i,j+1)$ in $V_{i+1}$ smaller than itself is $(2i,j)$, and the only vertices between those two in the ordering are $(2i-2,j-1)$, $(2i-1,j-1)$, and $(2i-1,j)$, all of them belonging to $V_i$. 
Finally, the only neighbor of   
$(2i,j)$ in $V_{i}$ smaller than itself is $(2i,j-1)$, and the only vertices between those two in the ordering are $(2i+1,j-2)$, $(2i+1,j-1)$, and $(2i+2,j)$, all of them belonging to $V_{i+1}$. 

To prove the lower bound, we will use a result by Jel\'{\i}nek~\cite{Jelinek-grids}, namely, for every order $v_1, \dots, v_{nm}$ of the vertices of $GR_{n,m}$, there exists an index $t$ such that the induced bipartite graph $G[A,A']$ where $A=\{v_1, \dots, v_t\}$ and $A'=\{v_{t+1}, \dots, v_{nm}\}$, contains a matching of size $n-1$. Let $B \subseteq A$ and $B' \subseteq A'$ the matched vertices, $|B|=|B'|=n-1$. Suppose that there is a partition of $V(GR_{n,m})$ into $k$ classes, consistent with that ordering, suppose that there is a class $W$ such that $|W \cap B| \geq 4$, and let $w < x < y < z$ be four vertices of $W \cap B$. Let $w', x', y', z'$ be their respective matches in $B'$. Notice that all of them are greater than $z$. Then, by consistency, $w'$ is adjacent also to $x$, $y$, and $z$; $x'$ is adjacent also to $y$, and $z$.
But then there are two cycles $C_4$ consisting of $x',y,w',z$ and $x',y,w',x$, having two common edges ($(y,x')$ and $(y,w')$), which is not possible in $GR_{n,m}$. Therefore, $|W \cap B| \leq 3$ for each class $W$, hence $k \geq \frac{n-1}{3}$. 
\end{proof}

\begin{figure}
    \begin{center}
    \begin{tikzpicture}[scale=0.69]

\foreach \x in {1,...,7}
    \foreach \y in {1,...,9}
        \vertex{\x}{\y}{v\x\y};

\foreach \x in {1,...,6}
    \foreach \y in {1,...,9}
        \vertex{8+\x}{\y}{w\x\y};

\foreach \x in {1,...,7}
   \path (v\x1) edge [line width=0.1mm] (v\x9);

   \foreach \y in {1,...,9}
   \path (v1\y) edge [line width=0.1mm] (v7\y);

\foreach \x in {1,...,6}
   \path (w\x1) edge [line width=0.1mm] (w\x9);

   \foreach \y in {1,...,9}
   \path (w1\y) edge [line width=0.1mm] (w6\y);

\foreach \x in {1,3,5,7}
   \path (v\x1) edge [line width=0.5mm] (v\x9); 

\foreach \x in {1,3,5}
   \path (w\x1) edge [line width=0.5mm] (w\x9);    

\foreach \x / \z in {1/2,4/6}
    \foreach \y in {1,3,5,7,9}
           \path (v\x\y) edge [line width=0.5mm] (v\z\y);    

\foreach \x / \z in {2/4,6/7}
    \foreach \y in {2,4,6,8}
           \path (v\x\y) edge [line width=0.5mm] (v\z\y);  

\foreach \x / \z in {1/2,4/6}
    \foreach \y in {1,3,5,7,9}
           \path (w\x\y) edge [line width=0.5mm] (w\z\y);    

\foreach \x / \z in {2/4}
    \foreach \y in {2,4,6,8}
           \path (w\x\y) edge [line width=0.5mm] (w\z\y);  

   \foreach \y in {2,4,6,8}
           \draw (13.75,\y-0.25) -- (14.25,\y-0.25) -- (14.25,\y+0.25) -- (13.75,\y+0.25) -- (13.75,\y-0.25);

    \node at (4,0) {Odd $n$};
    \node at (11.5,0) {Even $n$};

\fill [color=lightgray] (15.5,3.3) -- (19.5,3.3) -- (19.5,4.95) -- (15.5,4.95) -- (15.5,3.3);

   \foreach \y in {1,3,5,7,9}
{      \vertex{16}{1.2+0.85*\y}{z1\y};        
        \draw [dotted] (15.5,1.2+0.85*\y) -- (19.5,1.2+0.85*\y); 
}

    \foreach \y in {1,3,5,7,9}
{      \vertex{17}{0.2+0.85*\y}{z3\y}; 
        \draw [dotted] (15.5,0.2+0.85*\y) -- (19.5,0.2+0.85*\y); 
}

    \foreach \y in {2,4,6,8}
        \vertex{18}{0.2+0.85*\y}{z3\y};

   \foreach \y in {1,3,5,7,9}
      \vertex{16.5}{1.4+0.85*\y}{z2\y};    

   \foreach \y in {2,4,6,8}
      \vertex{16.5}{0.8+0.85*\y}{z2\y};    

   \foreach \y in {1,3,5,7,9}
      \vertex{18.5}{0.6+0.85*\y}{z4\y};    

   \foreach \y in {2,4,6,8}
      \vertex{18.5}{1.2+0.85*\y}{z4\y};    

       \foreach \y in {2,4,6,8}
        \vertex{19}{0.85*\y}{z5\y};

\draw [dotted] (17.5,1) -- (17.5,9); 

   \foreach \x in {2,4}
   \path (z\x1) edge [line width=0.5mm] (z\x9);

    \foreach \y in {1,3,5,7,9}
{   \path (z1\y) edge [line width=0.5mm] (z2\y);
   \path (z3\y) edge [line width=0.5mm] (z2\y);
      \path (z3\y) edge [line width=0.1mm] (z4\y);
}

       \foreach \y in {2,4,6,8}
{   \path (z3\y) edge [line width=0.5mm] (z4\y);
   \path (z5\y) edge [line width=0.5mm] (z4\y);
         \path (z3\y) edge [line width=0.1mm] (z2\y);
}

         \path (z31) edge [line width=0.1mm] (z32);
        \path (z33) edge [line width=0.1mm] (z32);
        \path (z33) edge [line width=0.1mm] (z34);
        \path (z35) edge [line width=0.1mm] (z34);
        \path (z35) edge [line width=0.1mm] (z36);
        \path (z37) edge [line width=0.1mm] (z36);
        \path (z37) edge [line width=0.1mm] (z38);
        \path (z39) edge [line width=0.1mm] (z38);

    \end{tikzpicture}
    \end{center}
    \caption{Scheme for the consistent partition and ordering of a grid in Theorem~\ref{thinness-GRmn}. The third drawing shows the combined ordering of two consecutive classes (bottom up).}\label{fig:grid}
\end{figure}
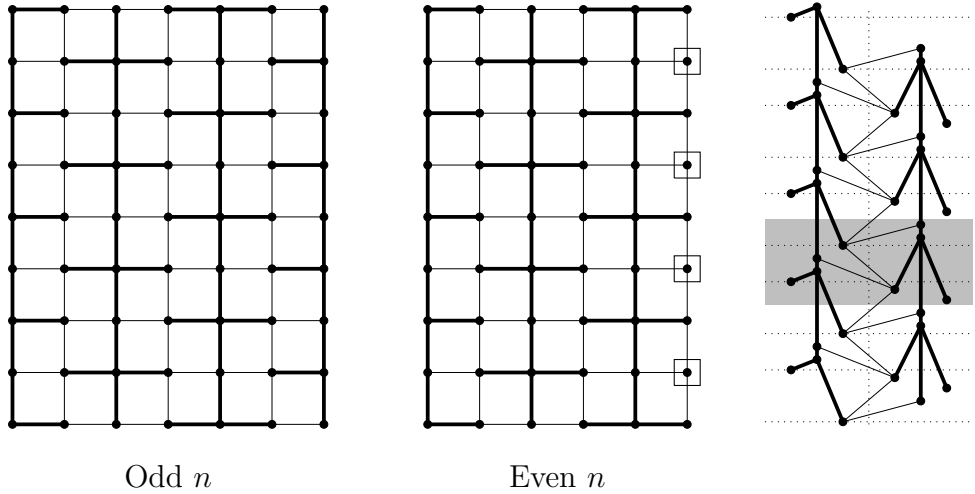

\begin{corollary}
[Bound for the thinness of $GR_n$]
    \label{thinness-GRr}
    For $n \geq 1$, $\left \lceil \frac{n-1}{3} \right \rceil \leq \thin(GR_n) \leq  \left \lceil \frac{n+1}{2} \right \rceil$.    
\end{corollary}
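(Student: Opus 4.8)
The plan is to deduce this corollary directly from Theorem~\ref{thinness-GRmn}, which establishes the bounds $\left\lceil \frac{n-1}{3} \right\rceil \leq \thin(GR_{n,m}) \leq \left\lceil \frac{n+1}{2} \right\rceil$ for all pairs of positive integers with $1 \leq n \leq m$. Since $GR_n$ is by definition the grid $GR_{n,n}$, the hypothesis $1 \leq n \leq m$ is satisfied with $m = n$ for every $n \geq 1$, and so the general bound specializes verbatim to the square case.

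Concretely, I would first observe that $GR_n = GR_{n,n}$, so $\thin(GR_n) = \thin(GR_{n,n})$. Then I would instantiate Theorem~\ref{thinness-GRmn} at $m = n$: the upper bound construction described there (the partition into classes $V_1, \dots, V_{\lceil (n+1)/2 \rceil}$ together with the canonical internal orderings and the interleaving pattern for consecutive classes) applies without change, since it is stated for arbitrary $m \geq n$; and the lower bound, which uses Jel\'{\i}nek's result guaranteeing a matching of size $n-1$ across any split of the vertex order of $GR_{n,m}$, likewise applies with $m = n$. Combining both gives $\left\lceil \frac{n-1}{3} \right\rceil \leq \thin(GR_n) \leq \left\lceil \frac{n+1}{2} \right\rceil$.

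There is essentially no obstacle here: the corollary is a pure specialization, and the only point worth checking is that the hypothesis $n \leq m$ of the theorem is not violated when $m = n$, which is immediate. Hence the proof is a single sentence invoking Theorem~\ref{thinness-GRmn} with $m = n$.
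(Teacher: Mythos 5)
Your proof is correct and takes the same route the paper does: the corollary is stated immediately after Theorem~\ref{thinness-GRmn} precisely because it is the direct specialization to $m = n$, and you correctly note that the hypothesis $1 \leq n \leq m$ is trivially satisfied. Nothing more is needed.
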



The grid $GR_{2,n}$ can be also viewed as the Cartesian product of the path $P_n$ and the complete graph $K_2$. So, as a corollary of the results in~\cite{B-D-thinness} for thinness of Cartesian products of graphs, $\thin(GR_{2,n})=2$, for $n\geq 2$. 

The precedence thinness instead, grows linearly in $n$ for $GR_{n,2}$ and quadratically in $n$ for $GR_n$, as it can be seen in the following theorems, whose proofs are based on Theorem~\ref{thm:fp-union}.  

\begin{theorem} \label{precthinness-GRn2}
For $n \geq 1$, $\fp(GR_{2,n}) = \left \lceil \frac{n+1}{2} \right \rceil$.    
\end{theorem}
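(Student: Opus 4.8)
The plan is to prove the two inequalities separately: the upper bound is a clean consequence of an interval‑vertex‑deletion argument, and the lower bound is an induction on $n$ that mimics the re‑arrangement ("un‑mixing'') surgery used in the proof of Theorem~\ref{thm:fp-union}.

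For the upper bound, I would first record a general fact: for every graph $G$, $\fp(G) \le d(G)+1$, where $d(G)$ is the least number of vertices whose deletion turns $G$ into an interval graph. Indeed, if $D=\{u_1,\dots,u_r\}$ realizes $d(G)$ and $\tau$ is a canonical ordering of the interval graph $G-D$, then the ordered partition $(\{u_1\},\dots,\{u_r\},V(G)\setminus D)$ with the order $u_1<\dots<u_r$ followed by $\tau$ is a precedence solution: each class is an interval of the order, and the only class of size larger than one is the last block, so any triple $(p,q,s)$ with $p,q$ in a common class has all three vertices in $V(G)\setminus D$, ordered by the canonical $\tau$, hence cannot break consistency. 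Applied to $GR_{2,n}$: deleting the $\lfloor n/2\rfloor$ vertices $(2,2),(2,4),\dots,(2,2\lfloor n/2\rfloor)$ destroys all horizontal edges of the second row, leaving the first‑row path with pendant second‑row vertices attached, i.e.\ a caterpillar, which is an interval graph; conversely an interval graph is $C_4$‑free, each of the $n-1$ unit squares $\{(1,j),(2,j),(1,j+1),(2,j+1)\}$ induces a $C_4$, and each vertex lies in at most two unit squares, so any such deletion set has size at least $\lceil (n-1)/2\rceil=\lfloor n/2\rfloor$. Thus $d(GR_{2,n})=\lfloor n/2\rfloor$ and $\fp(GR_{2,n})\le \lfloor n/2\rfloor+1=\lceil (n+1)/2\rceil$.

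For the lower bound I would prove $\fp(GR_{2,n})\ge \fp(GR_{2,n-2})+1$ for $n\ge 3$ and then induct from $\fp(GR_{2,1})=1$ and $\fp(GR_{2,2})=\fp(C_4)=2$; the recursion gives $\fp(GR_{2,n})\ge \lceil (n-1)/2\rceil+1=\lceil (n+1)/2\rceil$. Given an optimal precedence solution $(V_1,\dots,V_k)$ with ordering $\sigma$ for $GR_{2,n}$, let $L$ be the set of the four vertices in the last two columns, so $GR_{2,n}[L]$ is a $C_4$ and $GR_{2,n}\setminus L=GR_{2,n-2}$. Following the scheme of Theorem~\ref{thm:fp-union}, I would un‑mix every class meeting both $L$ and $V(GR_{2,n-2})$ by relocating its $L$‑part, pushing all of $L$ into a contiguous block of final classes; since $GR_{2,n}[L]$ is not an interval graph this block has at least two classes, and at most one of them can still contain vertices of $GR_{2,n-2}$, so restricting the modified solution to $GR_{2,n-2}$ uses at most $k-1$ classes, giving $\fp(GR_{2,n-2})\le k-1$. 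Two facts would drive the surgery: each class induces an interval graph (so no class holds both vertices of two consecutive columns), and, by consistency, every vertex outside $S_a:=V_1\cup\dots\cup V_a$ with a neighbour in $S_a$ is adjacent to $\last(V_a)$, whence the "frontier'' $N(V_a)\setminus S_a$ has at most $\deg(\last(V_a))\le 3$ vertices.

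The main obstacle is precisely this un‑mixing step. In Theorem~\ref{thm:fp-union} the two parts of $G_1\cup G_2$ have no edges between them, whereas here the two parts are joined by the "double bond'' $(1,n-2)(1,n-1)$, $(2,n-2)(2,n-1)$ — two parallel edges meeting opposite rows, which is neither a disjoint union nor a join — so moving the $L$‑part of a mixed class past its $GR_{2,n-2}$‑part can create bad triples unless the relocation is chosen carefully, using the frontier bound above to control how the $C_4$ on $L$ can attach to the rest of the graph. Making this relocation work while never increasing the number of classes is the technical heart of the argument; the base cases and the final arithmetic are routine.
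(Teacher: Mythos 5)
Your upper bound is correct and takes a genuinely different (and arguably cleaner) route than the paper. The general observation $\fp(G)\le d(G)+1$, where $d(G)$ is the minimum number of vertices whose deletion leaves an interval graph, is a valid and reusable lemma: singleton classes never appear as the pair $p,q$ of a bad triple, so only the last block matters, and there the canonical order takes care of everything. Your computation $d(GR_{2,n})=\lfloor n/2\rfloor$ via the $n-1$ unit-square $C_4$'s (each vertex in at most two of them, so a hitting set needs $\lceil (n-1)/2\rceil$ vertices) is also sound. The paper instead exhibits an explicit partition (Figure~\ref{fig:precgrid}) and counts parts, so your argument is a nice alternative, not a re-derivation.

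The lower bound, however, has a genuine gap, and you flag it yourself. The surgery of Theorem~\ref{thm:fp-union} hinges on the fact that in a disjoint union, the $G_2$-part of a mixed class $V_i$ with $\last(V_i)\in V(G_1)$ has \emph{no} neighbours in any later class. In your setting, with $L$ the last two columns and $\last(V_i)\in V(GR_{2,n-2})$, the $L$-vertices of $V_i$ can still have neighbours (in $L$ or even in column $n-2$) in later classes, so moving them can break consistency; your ``frontier has at most $3$ vertices'' observation does not by itself control this. You also do not address that $L$-vertices might already occur in the \emph{first} class of the solution (e.g.\ $V_1=\{(1,n),(2,n-1)\}$), in which case the un-mixing is a large-scale re-ordering rather than a local push. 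Describing the step, noting the obstruction, and calling it ``the technical heart'' is not a proof; the inequality $\fp(GR_{2,n})\ge \fp(GR_{2,n-2})+1$ is exactly what needs to be established.

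The paper avoids the surgery altogether. It analyzes the first class $V_1$ of an optimal precedence solution and proves $|V_1|\le 2$ with $V_1$ independent (using that the grid is bipartite and has no two $C_4$'s sharing a path of length two). When $|V_1|=2$, $V_1$ is forced to be a corner together with the vertex dominating its neighbourhood, so a $2\times 2$ corner block $R\supseteq V_1$ can be deleted, the remaining classes restrict to a precedence solution of $GR_{2,n-2}$, and one inducts. When $|V_1|=1$ the vertex is either a corner (same deletion) or an interior vertex $(a,b)$, in which case deleting column $b$ splits the grid into a genuine \emph{disjoint union} $GR_{2,b-1}\cup GR_{2,n-b}$, where Theorem~\ref{thm:fp-union} applies cleanly. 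If you want to keep your $L$-at-the-end strategy you would need to actually carry out the relocation argument with the frontier edges in play; otherwise, the $V_1$-analysis is the shorter road.
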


\begin{proof}
Let $GR_{2,n} = (V,E)$ with $V = \{(i,j) : 1 \leq i \leq 2, 1 \leq j \leq n\}$. Let $(\mathcal{V},\sigma)$ be any solution for the precedence thinness of $GR_{2,n}$, with $\mathcal{V} = (V_1, \ldots, V_k)$. 

Let us show first that $|V_1| \leq 2$. Suppose that $|V_1| \geq 3$ and let $v_1,v_2,v_3$ be the three first elements of $V_1$. (Ordering relations are assumed to be according to $\sigma$.) Note that $d(v_1) \geq \delta(GR_{2,n}) = 2$. Let $u_1 < u_2$ be two first neighbors of $v_1$. If $u_1 \in V_1$, then $(u_1,u_2) \in E$ or otherwise $(v_1, u_1, u_2)$ would break the consistency. But then $v_1,u_1,u_2$ induce a $C_3$, which is not possible. Therefore, $u_1 \notin V_1$ and, consequently, all neighbors of $v_1$ are not in $V_1$. Thus, $v_2,v_3$ are also neighbors of $u_1,u_2$, to avoid having $(v_1,v_i,u_j)$ break the consistency, for all $i \in \{2,3\}, j \in \{1,2\}$. As $(u_1,u_2) \notin E$ (avoiding the cycle $v_1,u_1,u_2$), there are two cycles isomorphic to $C_4$ consisting of $v_1,u_1,v_2,u_2$ and $v_1,u_1,v_3,u_2$, having two common edges ($(v_1,u_1)$ and $(v_1,u_2)$), which is not possible in $GR_{2,n}$. Therefore, $|V_1| \leq 2$ and, by previous arguments, $V_1$ must be an independent set. 

We prove that $\fp(GR_{2,n}) = \left \lceil \frac{n+1}{2} \right \rceil$ holds by induction on $n$. Since $\fp(GR_{2,1}) = 1$ and $\fp(GR_{2,2}) = 2$, the result holds for $n \leq 2$. Suppose $n \geq 3$ and that the result holds for all $GR_{2,n'}$ with $n' < n$. Let $(\mathcal{V},\sigma)$ be a solution for the precedence thinness of $GR_{2,n}$, with $\mathcal{V} = (V_1, \ldots, V_k)$. We proceed the proof by analyzing the cases in which $|V_1| = 1$ or $|V_1| = 2$.

If $|V_1| = 2$, then one of the elements of $V_1$ is a  ``corner'' of the grid (a degree-two vertex), and the other is that one that  dominates its neighborhood; without loss of generality, $V_1 = \{(1,1), (2,2)\}$. To see that, let $u < v$ be the two vertices of $V_1$ and let $H = GR_{2,n} \setminus N[v]$. We claim that $u$ must be disconnected from the other vertices of $H$. Indeed, suppose there is $(u,w) \in E(H)$. Then, $w \notin V_1$. To avoid having $(u,v,w)$ break the consistency, we have that $(v,w) \in E$. But then $w$ could not be in $H$, since $w \in N[v]$. Thus, the claim holds. Therefore, $u$ must be a corner of the grid (say, vertex $(1,1)$) and $v$ must the vertex that dominates its neighborhood (say, vertex $(2,2)$).

Let $R = \{(i,j) : 1 \leq i \leq 2, 1 \leq j \leq 2\}$. Let $G = GR_{2,n}$ and $G' = G[V \setminus R]$; therefore, $G'$ is isomorphic to $GR_{2,n-2}$. By induction hypothesis, we have that $\fp(G') = \left \lceil \frac{n-2+1}{2} \right \rceil = \left \lceil \frac{n-1}{2} \right \rceil$. We have that, without loss of generality, $V_1 \subseteq R$. Consequently, $(V_2 \setminus R,\ldots,V_k \setminus R)$ is a partition of $V(G')$ and, considering the ordering $\sigma$ restricted to the elements of $V(G')$, such a partition consists of a solution for the precedence thinness of $G'$. Therefore, $k - 1 \geq \fp(G') = \left \lceil \frac{n-1}{2} \right \rceil$, and therefore $k \geq \left \lceil \frac{n-1}{2} \right \rceil + 1 = \left \lceil  \frac{n-1}{2}+1 \right \rceil = \left \lceil  \frac{n+1}{2} \right \rceil$. Thus, $\fp(GR_{2,n}) \geq \left \lceil  \frac{n+1}{2} \right \rceil$.

If $|V_1|=1$, let $v = (a,b)$ be the unique vertex of $V_1$. If $b=1$ or $b=n$ (that is, $v$ is a corner of the grid), without loss of generality, we may assume $b=1$. Letting $R = \{(i,j) : 1 \leq i \leq 2, 1 \leq j \leq 2\}$, $G = GR_{2,n}$ and $G' = G[V \setminus R]$, we may proceed the same reasoning as in the previous paragraph, to conclude that $\fp(GR_{2,n}) \geq \left \lceil  \frac{n+1}{2} \right \rceil$. On the other hand, if $1 < b < n$, the proof proceeds as follows. Let $V' = \{(i,j) : 1 \leq i \leq 2 \text{, } 1 \leq j < b\}$, $V'' = \{(i,j) : 1 \leq i \leq 2 \text{, } b < j \leq n\}$, $G' = G[V']$, and $G'' = G[V'']$. Therefore, $G'$ is isomorphic to $GR_{2,b-1}$, whereas $G''$ is isomorphic to $GR_{2,n-b}$. By Theorem~\ref{thm:fp-union}, we have that $\fp(G' \cup G'') = \fp(G')+\fp(G'')-1 = \left \lceil \frac{b-1+1}{2} \right \rceil + \left \lceil \frac{n-b+1}{2} \right \rceil - 1$, the latter holding by the induction hypothesis. Therefore, $\fp(G' \cup G'') \geq \left \lceil \frac{b}{2} + \frac{n-b+1}{2} -1 \right \rceil = \left \lceil \frac{n-1}{2} \right \rceil$. Let $R = \{(3-a,b) \}$. As $G' \cup G'' \subset GR_{2,n}$, we have that $(V_2 \setminus R,\ldots,V_k \setminus R)$ is a partition of $V(G' \cup G'')$ and, again considering the respective restriction in $\sigma$, such a partition consists of a solution for the precedence thinness of $G' \cup G''$. Therefore, $k - 1 \geq \fp(G' \cup G'') \geq \left \lceil \frac{n-1}{2} \right \rceil$, and we have $\fp(GR_{2,n}) \geq \left \lceil  \frac{n+1}{2} \right \rceil$.

To show an upper bound on $\fp(GR_{2,n})$, we present the partition of Figure~\ref{fig:precgrid}, for odd and even values of $n$. The respective consistent orderings consists of all the elements that are alone in one part (the elements depicted in a square) coming first, in any order, followed by the vertices of the bold path. Those vertices of the path must be ordered according to any canonical ordering of the induced path. It is straightforward to check that this partition and ordering indeed is a solution for the precedence thinness of $GR_{2,n}$. Regarding the size of the partition, note that for each two  units in $n$, there is one part having only one element. Therefore, there are $\lfloor \frac{n}{2} \rfloor$ such  parts. Considering the part having all the remaining vertices, we have that the size of the partition is $\lfloor \frac{n}{2} \rfloor + 1 = \lfloor \frac{n+2}{2} \rfloor = \left \lceil \frac{n+1}{2} \right \rceil$. Therefore,  $\fp(GR_{2,n}) \leq \left \lceil  \frac{n+1}{2} \right \rceil$.  

\end{proof}

\begin{figure}[htb]
    \begin{center}
    \begin{tikzpicture}[scale=0.69]

\foreach \x in {1,...,7}
    \foreach \y in {1,...,2}
        \vertex{\x}{\y}{v\x\y};

\foreach \x in {1,...,6}
    \foreach \y in {1,...,2}
        \vertex{8+\x}{\y}{w\x\y};

\foreach \x in {1,...,7}
   \path (v\x1) edge [line width=0.1mm] (v\x2);

\foreach \y in {1,...,2}
   \path (v1\y) edge [line width=0.1mm] (v7\y);

\foreach \x in {1,...,6}
   \path (w\x1) edge [line width=0.1mm] (w\x2);

\foreach \y in {1,...,2}
   \path (w1\y) edge [line width=0.1mm] (w6\y);

\path (v12) edge [line width=0.5mm] (v32); 
\path (v31) edge [line width=0.5mm] (v51); 
\path (v52) edge [line width=0.5mm] (v72); 
\foreach \x in {1,3,5,7}
    \path (v\x1) edge [line width=0.5mm] (v\x2); 

\path (w12) edge [line width=0.5mm] (w32); 
\path (w31) edge [line width=0.5mm] (w51); 
\path (w52) edge [line width=0.5mm] (w62); 
\foreach \x in {1,3,5}
    \path (w\x1) edge [line width=0.5mm] (w\x2); 

\foreach \x / \y in {2/1, 4/2, 6/1} {
    \draw (\x-0.25,\y-0.25) -- (\x+0.25,\y-0.25) -- (\x+0.25,\y+0.25) -- (\x-0.25,\y+0.25) -- (\x-0.25,\y-0.25);
    \draw (8+\x-0.25,\y-0.25) -- (8+\x+0.25,\y-0.25) -- (8+\x+0.25,\y+0.25) -- (8+\x-0.25,\y+0.25) -- (8+\x-0.25,\y-0.25);
    }

    \node at (4,0) {Odd $n$};
    \node at (11.5,0) {Even $n$};

    \end{tikzpicture}
    \end{center}
    \caption{Scheme for the partition of a grid in Theorem~\ref{precthinness-GRn2}.}\label{fig:precgrid}
\end{figure}
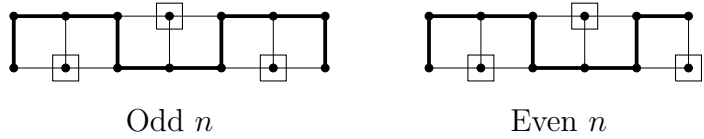

\begin{theorem} \label{precthinness-GRn}
For $n \geq 1$, $\left \lceil \frac{n-1}{3} \right \rceil \left \lceil\frac{n-1}{2}\right \rceil + 1 \leq \fp(GR_n) \leq \left \lceil\frac{n-1}{2}\right \rceil^2 +1$.    
\end{theorem}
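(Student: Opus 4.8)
The plan is to treat the two bounds separately, both times exploiting Theorem~\ref{thm:fp-union} together with the thinness bound from Corollary~\ref{thinness-GRr}. For the upper bound, I would first observe that $GR_n$ decomposes into $\lceil (n-1)/2\rceil$ ``horizontal strips'' of consecutive rows: take strip $S_t$ to consist of rows $2t-1,2t,2t+1$ (so consecutive strips overlap in a single row, or rather, more cleanly, cut between rows so the strips are vertex-disjoint $GR_{r,n}$ blocks with $r \in \{2,3\}$, or argue via repeated union of $GR_{2,n}$-like pieces). The cleanest route: delete a set of rows to split $GR_n$ as a disjoint union; but since $GR_n$ is connected, instead apply Theorem~\ref{thm:fp-union} inductively after removing a single separating row, exactly as in the proof of Theorem~\ref{precthinness-GRn2}. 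Concretely, each block is a $GR_{c,n}$ with $c\le 3$, which is an induced subgraph of $GR_n$ with small thinness; I would show $\fp(GR_{3,n}) \le$ (something like) $\lceil (n-1)/2\rceil+1$ or handle the strips so that each contributes $\lceil(n-1)/2\rceil$ to the union formula, and then $\lceil(n-1)/3\rceil$ strips glued by Theorem~\ref{thm:fp-union} give $\lceil(n-1)/3\rceil\cdot\text{(per-strip cost)} - (\text{overlaps}) + 1$. Balancing the arithmetic to land exactly on $\lceil(n-1)/2\rceil^2+1$ is the delicate bookkeeping; morally, $GR_n$ is a union of $\sim n/3$ strips each of precedence thinness $\sim n/2$, and union adds these up (minus one per gluing), giving $\sim (n/2)^2$.

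For the lower bound I would use the same Jel\'inek-style balanced-matching argument as in the proof of Theorem~\ref{thinness-GRmn}, but now extract quantitative information about the \emph{number} of classes rather than just a bound on $|W\cap B|$. Fix any consistent precedence solution $(\mathcal V,\sigma)$ with $\mathcal V=(V_1,\dots,V_k)$. By Jel\'inek's theorem there is a cut $A=\{v_1,\dots,v_t\}$, $A'=\{v_{t+1},\dots,v_{nm}\}$ inducing a matching of size $n-1$; let $B\subseteq A$, $B'\subseteq A'$ be the matched vertices. By the $C_4$-argument from Theorem~\ref{thinness-GRmn}, each class meets $B$ in at most $3$ vertices, so $B$ is spread over at least $\lceil (n-1)/3\rceil$ classes. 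The precedence constraint forces these classes to be an \emph{interval} $V_a,V_{a+1},\dots,V_b$ of consecutive classes (since all of $B$ lies in the prefix $A$ of $\sigma$, hence in a prefix of the class order). Within this interval, I would argue that each class — being an induced subgraph of $GR_n$ and simultaneously ``seeing'' many matched partners in $B'$ to its right — cannot be too large; more precisely, I'd show each such class has size at most $\lceil (n-1)/2\rceil$ by a $C_4$/grid-forbidden-configuration argument applied to the class together with its matched neighbors, giving $|B|\ge (\text{number of classes meeting }B)\cdot\bigl(\text{but bounded per class}\bigr)$ — wait, that goes the wrong way; instead the right count is that each of the $\ge\lceil(n-1)/3\rceil$ ``matched'' classes is itself forced to contain $\ge\lceil(n-1)/2\rceil$ vertices coming from the grid structure forced by precedence, OR one shows directly that the matched region, being a precedence-consistent layout of something grid-like of ``width'' $n-1$ in both coordinates, needs $\lceil(n-1)/3\rceil\lceil(n-1)/2\rceil$ classes by combining the thinness lower bound $\lceil(n-1)/3\rceil$ per ``column-band'' with the fact that precedence forbids reusing a class across non-adjacent bands.

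The main obstacle, and the step I'd spend the most care on, is the lower bound's bookkeeping: turning ``thinness $\ge \lceil(n-1)/3\rceil$'' plus ``precedence'' into the \emph{product} $\lceil(n-1)/3\rceil\lceil(n-1)/2\rceil+1$. The key insight to make this work is that in a precedence solution the classes form a linear order, and an induced sub-grid $GR_{r,s}$ placed ``in the middle'' of $GR_n$ cannot share any class with the parts of the grid on either side of it (because the separating rows/columns block the consistency transfer), so the precedence thinness is genuinely \emph{additive} over a chain of $\sim(n-1)/2$ nested or stacked sub-grids each of thinness $\sim(n-1)/3$ — this is exactly the mechanism already used for $GR_{2,n}$ in Theorem~\ref{precthinness-GRn2} and formalized in Theorem~\ref{thm:fp-union}, and the proof here should iterate it in both grid directions. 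The matching-based lower bound of Theorem~\ref{thinness-GRmn} supplies the ``$\lceil(n-1)/3\rceil$ per band'' factor, and there are $\lceil(n-1)/2\rceil$ disjoint bands one can peel off with a union decomposition, with a ``$+1$'' absorbed from one leftover class; verifying that the union-formula telescoping yields precisely this product (and not off by a constant) is the crux.
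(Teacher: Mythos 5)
Your proposal has a genuine gap on the upper bound and a muddled (if partially on-scent) approach to the lower bound.

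\textbf{Upper bound.} You propose to ``cut'' $GR_n$ into horizontal strips and invoke Theorem~\ref{thm:fp-union} ``inductively after removing a separating row,'' and you even note that $GR_n$ is connected. But Theorem~\ref{thm:fp-union} is a statement about the precedence thinness of a \emph{disjoint union}: it tells you nothing about how to build a consistent precedence solution for the whole of $GR_n$ from solutions for the strips, because the edges between consecutive strips reintroduce consistency constraints that the strip-local solutions were never designed to satisfy. Disjoint-union arguments can transfer \emph{lower} bounds to a supergraph (by passing to induced subgraphs), not \emph{upper} bounds to a connected graph. Also, your own heuristic count ($\sim n/3$ strips each costing $\sim n/2$) gives $\sim n^2/6$, which is not $\lceil (n-1)/2\rceil^2 + 1 \sim n^2/4$; if your bookkeeping ever ``landed'' it would be landing on the wrong number. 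The paper's upper bound is instead a direct construction: $V(GR_n)$ is partitioned into $\lceil (n-1)/2\rceil^2$ ``claws'' plus one caterpillar, with a carefully chosen consistent precedence order; there is no way to get this by gluing strips.

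\textbf{Lower bound.} You hover around the right idea but do not land on it, and at one point you explicitly note your counting ``goes the wrong way.'' The paper's argument is much more economical than your Jel\'{\i}nek-matching detour: $GR_n$ contains $r=\lceil (n-1)/3\rceil$ vertex-disjoint induced copies $G_1,\dots,G_r$ of $GR_{2,n}$ (pairs of rows separated by a blank row, which fits since $3r-1\le n$). Each has $\fp(G_i)=\lceil (n+1)/2\rceil$ by Theorem~\ref{precthinness-GRn2}, so by repeated application of Theorem~\ref{thm:fp-union}, $\fp(G_1\cup\cdots\cup G_r)=r\lceil (n+1)/2\rceil - (r-1) = r\lceil (n-1)/2\rceil+1$, and this is a lower bound on $\fp(GR_n)$ because the union is an induced subgraph. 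Note the roles of the two ceilings are the opposite of what you guess at the end: one takes $\lceil (n-1)/3\rceil$ copies (the number of two-row strips that fit), each contributing $\lceil (n-1)/2\rceil$ after the ``$-1$ per gluing,'' not $\lceil (n-1)/2\rceil$ bands of plain thinness $\lceil (n-1)/3\rceil$. The balanced-matching machinery from Theorem~\ref{thinness-GRmn} is not used at all here; it is only needed to lower-bound \emph{non-precedence} thinness.
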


\begin{proof}
    Notice that $GR_n$ has the disjoint union of $r = \left \lceil \frac{n-1}{3} \right \rceil$ copies $G_1, \ldots, G_r$ of $GR_{2,n}$ as induced subgraph. By successive applications of Theorem~\ref{thm:fp-union}, we have that $\fp(G_1 \cup \ldots \cup G_r) = \sum_{i=1}^r \fp(G_i) - (r-1) = r \left \lceil \frac{n+1}{2} \right \rceil - r + 1 = r\left \lceil \frac{n-1}{2}\right \rceil +1 = \left \lceil \frac{n-1}{3} \right \rceil \left \lceil\frac{n-1}{2}\right \rceil + 1$. Consequently, $\fp(GR_n) \geq \fp(G_1 \cup \ldots \cup G_r) = \left \lceil \frac{n-1}{3} \right \rceil \left \lceil\frac{n-1}{2}\right \rceil + 1$. 

    For the upper bound, we present the partition depicted in Figure~\ref{fig:precgridn}, consisting of $\left \lceil\frac{n-1}{2}\right \rceil^2 +1$ parts: each set of vertices reached by a same drawing of a ``claw'' is a part, and the caterpillar in the top and right sector of the grid is also a part. (Note that the diagonal edge of the claw is not part of the grid; each claw only represents the vertices belonging to a same part.)  Depending on the position in the grid, the portions of the claws that would cross the left and the bottom sides of the grid are absent, but nevertheless they will be called a claw anyway.
    Also, depending on the parity of $n$, the last part can be either a caterpillar or an independent set, but we will refer to it as a caterpillar. 
    
    We build an ordering $\sigma$ of the vertices of the grid as follows. Enumerate all parts $V_1, \ldots, V_r$ with $r = \left \lceil\frac{n-1}{2}\right \rceil^2$ according to the relative positions among the corresponding claws in the scheme, from bottom to top, and for claws at the same level, from left to right. Therefore, $V_1$ is the bottommost leftmost claw (indeed, the corner and the vertex that dominates its neighborhood), $V_2$ is the first (partial) claw at the right of $V_1$, and so on. For each $1 \leq i \leq r$, if the vertices of $V_i = \{a,b,c,d\}$ are laid out as depicted in the right portion of the figure, then order them in $\sigma$ as $a < b < c < d$. A vertex of a claw in the relative position of $a$ will be called of type $a$, and analogously for types $b,c,d$. Moreover, for all $u \in V_i$, $v \in V_j$, we let $u < v \iff i < j$. Finally, define a new part $V_{r+1}$ having all remaining vertices. The vertices in $V_{r+1}$ are ordered according to a canonical order of the caterpillar. Besides, all vertices in $V_{r+1}$ come in $\sigma$ after all other vertices of the grid. 
    
    It is easy to verify that all vertices belonging to a same part are consecutive in $\sigma$. Let us show that $\sigma$ is consistent to $(V_1,\ldots,V_{r+1})$. Suppose there is $(u,v,w)$ breaking the consistency. It is trivial to check that $u,v,w$ cannot all be in any $V_i$, $1 \leq i \leq r$, and also cannot all be in $V_{r+1}$ since we have used a canonical order. Therefore, $u,v \in V_i$ and $w \in V_j$ with $i \neq j$. Since $v < w$, then $i < j$. 
    
    Notice that, by the chosen ordering among classes, vertices of type $a$ and $b$ of $V_i$ have no neighbors in $V_j$, and all possible neighbors in $V_j$ of the vertex of type $c$ of $V_i$ are also neighbors of the vertex of type $d$. Since $a < b < c < d$ in $V_i$, it is not possible to break consistency with vertices $u,v \in V_i$ and $w \in V_j$, where $i < j$.    
    Thus, $\sigma$ is consistent with the partition. Therefore, $\fp(GR_n) \leq \left \lceil\frac{n-1}{2}\right \rceil^2 +1$.
\end{proof}

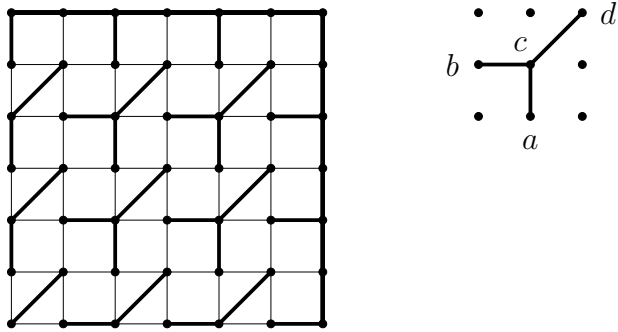
\begin{figure}[htb]
    \begin{center}
    \begin{tikzpicture}[scale=0.69]

\foreach \x in {0,...,6}
    \foreach \y in {0,...,6}
        \vertex{\x}{\y}{v\x\y};

\foreach \x in {9,...,11}
    \foreach \y in {4,...,6}
        \vertex{\x}{\y}{w\x\y};
\path (w105) edge [line width=0.5mm] ($(w105)+(-1,0)$); 
\path (w105) edge [line width=0.5mm] ($(w105)+(0,-1)$); 
\path (w105) edge [line width=0.5mm] ($(w105)+(1,1)$);
\node at (9-0.5,5) {$b$};
\node at (10,4-0.5) {$a$};
\node at (10-0.2,5+0.4) {$c$};
\node at (11+0.5,6) {$d$};

\foreach \x in {0,...,6}
   \path (v\x0) edge [line width=0.1mm] (v\x6);

\foreach \y in {0,...,6}
   \path (v0\y) edge [line width=0.1mm] (v6\y);

\foreach \x in {0,2,4,6}
\foreach \y in {0,2,4,6} {
    \ifthenelse{\x>0}{
   \path (v\x\y) edge [line width=0.5mm] ($(v\x\y)+(-1,0)$); 
    }{};
    \ifthenelse{\y>0}{
    \path (v\x\y) edge [line width=0.5mm] ($(v\x\y)+(0,-1)$); 
    }{};
    \ifthenelse{\x<6 \AND \y<6}{
        \path (v\x\y) edge [line width=0.5mm] ($(v\x\y)+(1,1)$); 
    }{};

    \path (v06) edge [line width=0.5mm] (v66);
    \path (v60) edge [line width=0.5mm] (v66);

}



    \end{tikzpicture}
    \end{center}
    \caption{Scheme for the partition of a grid in Theorem~\ref{precthinness-GRn}.}\label{fig:precgridn}
\end{figure}

\section{Coloring of thin graphs} \label{sec:color}

In this section, we prove that  $k$-coloring ($k$ part of the input) is NP-complete for precedence 2-thin graphs and for proper 2-thin graphs, while it is polynomial-time solvable for precedence proper 2-thin graphs, given the order and partition. 

In order to prove the hardness results, we will reduce from $\mu$-coloring, which is NP-complete on proper interval graphs~\cite{B-F-O-mu-col-full}. 

Given a graph $G$ and a function $\mu:V(G)\to\N$, the
\emph{$\mu$-coloring} problem consists in deciding whether $G$ is
\emph{$\mu$-colorable}, i.e. whether there exists a coloring $f:V(G)\rightarrow \N$ such that $f(v) \leq \mu(v)$ for
every $v\in V(G)$.  

\begin{theorem}
The $k$-coloring problem is NP-complete for precedence 2-thin graphs and for proper 2-thin graphs, when $k$ part of the input.
\end{theorem}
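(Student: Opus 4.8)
The plan is to reduce from $\mu$-coloring on proper interval graphs, which is NP-complete by~\cite{B-F-O-mu-col-full}; membership in NP is immediate since $k$-coloring (with $k$ in the input) is in NP for arbitrary graphs. Given a proper interval graph $G$ with a proper interval ordering $u_1 < \cdots < u_n$ and a function $\mu\colon V(G)\to\N$, I would first normalize so that $1\le \mu(v)\le n$ for all $v$ and set $k=\max_v \mu(v)$. From $(G,\mu)$ I will build a graph $G'$ together with the number $k$ such that $G'$ is precedence $2$-thin (resp.\ proper $2$-thin) and $G'$ is $k$-colorable if and only if $G$ is $\mu$-colorable.

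For the \emph{precedence} case, let $G'$ be $G$ together with a \emph{palette} clique $Z=\{z_1,\dots,z_k\}$ and the edges $vz_j$ for every $v\in V(G)$ and every $j>\mu(v)$. Correctness: in any $k$-coloring of $G'$ the clique $Z$ uses all $k$ colors, so after renaming colors we may assume $f(z_j)=j$, and then $vz_j\in E(G')$ for all $j>\mu(v)$ forces $f(v)\le\mu(v)$; conversely a $\mu$-coloring of $G$ extends by $f(z_j)=j$. For the precedence witness I take the ordered partition $(Z,V(G))$ with ordering $z_1<\cdots<z_k<u_1<\cdots<u_n$ (each class consecutive). Consistency is routine: a bad triple inside $Z$ is impossible as $Z$ is complete; a bad triple inside $V(G)$ is impossible since $u_1<\cdots<u_n$ is a canonical ordering of $G$; and a triple $z_a<z_b<v$ cannot break consistency because $N(v)\cap Z=\{z_j:j>\mu(v)\}$ is a suffix of the ordered palette, so $z_a\sim v$ implies $a>\mu(v)$, hence $b>\mu(v)$ and $z_b\sim v$.

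For the \emph{proper} case the same $G'$ does not work: each $z_j$ is joined to the sublevel set $\{v:\mu(v)<j\}$, which need not be an interval of $u_1<\cdots<u_n$, and by Lemma~\ref{prop:neigh:propthin} strong consistency requires every closed neighbourhood to meet every class in a consecutive set, so these long-range dependencies governed by the arbitrary $\mu$ are fatal. The fix is to replace the single palette by a \emph{chained palette}: take $P$, the $(k-1)$-th power of a path on $nk$ vertices $z_1,\dots,z_{nk}$, a proper interval graph whose only $k$-coloring (up to renaming) is $f(z_m)\equiv m\pmod k$; view $Z^i=\{z_{(i-1)k+1},\dots,z_{ik}\}$ as the $i$-th ``slot'', which carries colors $1,\dots,k$ in order; and join $u_i$ to the suffix $S_i=\{z_{(i-1)k+\mu(u_i)+1},\dots,z_{ik}\}$ of its slot. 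Correctness is as before once one notes that the rigidity of the coloring of $P$ propagates a single global renaming, after which $u_i$ avoids colors $\mu(u_i)+1,\dots,k$. For proper $2$-thinness I take the partition $(V(G),V(P))$ and the ordering interleaving $z_1<\cdots<z_{nk}$ with the $u_i$'s, placing each $u_i$ immediately after $z_{ik}$, and verify the four families of consecutiveness conditions of Lemma~\ref{prop:neigh:propthin}: inside $V(G)$ the induced order is the proper interval order of $G$; inside $V(P)$ every closed neighbourhood is a window; $N[u_i]$ meets $V(P)$ exactly in $S_i\cup\{u_i\}$, which is a block ending at $u_i$; and $N[z_m]$ meets $V(G)$ in at most the single vertex $u_{\lceil m/k\rceil}$, which sits adjacent to $z_m$ in the induced order.

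The step I expect to be the main obstacle is the proper $2$-thin direction: one must design a gadget that is simultaneously (i) rigid enough to encode the color bounds $\mu$, (ii) local, so that no sublevel set of $\mu$ is ever required to be an interval, and (iii) compatible with a two-class strongly consistent layout together with $G$; the path-power chain is tailored to satisfy all three, and the remaining work is the somewhat delicate but essentially mechanical verification of the consecutiveness conditions for the interleaved ordering. Polynomiality of both reductions is clear, since $k\le n$ and $G'$ has $O(n)$ vertices in the precedence case and $O(n^2)$ in the proper case.
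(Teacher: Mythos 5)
Your proposal is correct and follows essentially the same route as the paper's proof. For the precedence case, your palette clique $Z$ with edges $vz_j$ for $j>\mu(v)$ and the ordered partition $(Z,V(G))$ matches the paper's construction exactly (the paper uses a clique of size $n$ after capping $\mu$ at $n$, you use size $k=\max_v\mu(v)$; an inessential difference). For the proper case, your ``chained palette'' $P$, the $(k-1)$-th power of a path on $nk$ vertices with slots $Z^i$ and suffix attachments $S_i$, together with the interleaved ordering placing $u_i$ right after $z_{ik}$, is precisely the paper's gadget $B$: the paper describes $B$ via its maximal cliques, which are all windows of size $n$ in the linear arrangement $w^1_1,\dots,w^n_n$, i.e.\ the $(n-1)$-th power of $P_{n^2}$, with $v_k$ attached to the suffix $\{w^k_j : j>\mu(v_k)\}$ of slot $k$ and placed immediately after $w^k_n$. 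Your rigidity argument (unique $k$-coloring of the path power up to renaming) and your check of the consecutiveness conditions of Lemma~\ref{prop:neigh:propthin} coincide with the paper's verification.
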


\begin{proof}
Let $(G,\mu')$ be a $\mu'$-coloring instance, where $G$ is a proper interval graph and $v_1, \dots, v_n$ is a proper interval order of $V(G)$. Notice that the instance is equivalent, in terms of feasibility, to the instance $(G,\mu)$ with $\mu(v) = \min(\mu'(v),n)$ for every $v \in V(G)$. We will firstly show a polynomial time reduction from $(G,\mu)$ to an $n$-coloring instance of a precedence 2-thin graph $G'$, and secondly show a polynomial time reduction from $(G,\mu)$ to an $n$-coloring instance of a proper 2-thin graph $G''$.
          
Let $G'$ be a graph such that $V(G') = V(G) \cup A$, where $V(G)$ induces $G$, $A = \{w_1, \dots, w_n\}$ induces a complete graph, and $v \in V(G)$ is adjacent to $w_i$ if and only if $\mu(v) < i$. Let us see that $G'$ is precedence 2-thin with partition $\{A, V(G)\}$, and the vertex order $w_1, \dots, w_n, v_1, \dots, v_n$.  Let $x < y < z$ in $V(G')$ such that $x, y$ belong to the same class and $(x,z) \in E(G')$. If $x, y \in V(G)$, then $(y,z) \in E(G) \subseteq E(G')$ because the order restricted to $V(G)$ is an interval order. If $x, y \in A$, then $x = w_i$ and $y = w_j$ with $i < j$. If $z \in A$, then $(y,z) \in E(G')$ because $A$ induces a complete graph in $G'$. If $z$ in $V(G)$, since $(x,z) \in E(G')$, $\mu(z) < i < j$, so $(y,z) \in E(G')$ as well. 

Now suppose there is a $\mu$-coloring of $G$. We can extend it to an $n$-coloring of $G'$ by giving color $i$ to $w_i$, for $i=1, \dots, n$. Since $v \in V(G)$ is adjacent to $w_i$ if and only if $\mu(v) < i$, the coloring is valid. 
Conversely, suppose there is an $n$-coloring of $G'$. Since $A$ is a complete subgraph of $G'$, we can rename the colors and obtain a coloring $f$ such that $f(w_i) = i$ for $i=1, \dots, n$. Since $v \in V(G)$ is adjacent to $w_i$ if and only if $\mu(v) < i$, for every vertex $v$ of $V(G)$, $f(v) \leq \mu(v)$, so $G$ admits a $\mu$-coloring.

Let $G''$ be a graph such that $V(G'') = V(G) \cup B$, where $V(G)$ induces $G$, $B = w^1_1, \dots, w^1_n, w^2_1, \dots, w^2_n, w^n_1, \dots, w^n_n$ is a  proper interval graph whose  maximal cliques are $w^i_1, \dots, w^i_n$, for $i=1,\dots,n$, and $w^i_j, \dots, w^{i+1}_{j-1}$ for $i=1,\dots,n-1$, $j=2,\dots,n$, and $v_k \in V(G)$ is adjacent to $w^i_j$ if and only if $i=k$ and $\mu(v_k) < j$. Let us see that $G''$ is proper 2-thin with partition $V(G),B$, and the vertex order $w^1_1, \dots, w^1_n, v_1, w^2_1, \dots, w^2_n, v_2, \dots, w^n_1, \dots, w^n_n, v_n$. Let $x < y < z$ in $V(G'')$ such that $x, y$ belong to the same class and $(x,z) \in E(G'')$. If the three vertices belong to the same class, $(y,z) \in E(G'')$ because the order restricted to each class is a proper interval order. Otherwise, $z$ in $V(G)$ and $x,y \in B$, since no vertex of $B$ has a neighbor smaller than itself in $V(G)$. Let $x=w^i_j$ and $y=w^{i'}_{j'}$ and $z=v_k$. Since  $(x,z) \in E(G'')$, $i=k$ and $\mu(z) < j$, so, by the order definition, 
$i'=k$ and $j < j'$, thus $\mu(z) < j$ and $(y,z) \in E(G'')$ as well. It is easy to see that the case in which $x < y < z$ in $V(G'')$, $y, z$ belong to the same class, and $(x,z) \in E(G'')$, may only arise when the three vertices belong to the same class, so $(x,y) \in E(G'')$ because the order restricted to each class is a proper interval order.

Now suppose there is a $\mu$-coloring of $G$. We can extend it to an $n$-coloring of $G''$ by giving color $j$ to $w^i_j$, for $i,j \in \{1, \dots, n\}$. Since $v_k \in V(G)$ is adjacent to $w^i_j$ if and only if $i=k$ and $\mu(v) < j$, the coloring is valid. 
Conversely, suppose there is an $n$-coloring of $G''$. By the definition of the maximal cliques of $G''[B]$, the $n$ color classes are $\{w^1_j,w^2_j,\dots,w^n_j\}$ for $j=1,\dots,n$. So, we can rename the colors and obtain a coloring $f$ such that $f(w^i_j) = j$ for $i,j \in \{1, \dots, n\}$. Since $v_k \in V(G)$ is adjacent to $w^i_j$ if and only if $i=k$ and $\mu(v) < j$, for every vertex $v_k$ of $V(G)$, $f(v_k) < \mu(v_k)$, so $G$ admits a $\mu$-coloring.
\end{proof}

\begin{figure}
    \begin{center}
    \begin{tikzpicture}[scale=0.65]

    \vertex{0}{0}{v0};
    \vertex{1}{0}{v1};
    \vertex{2}{0}{v2};

    \vertex{4}{0}{w0};
    \vertex{5}{0}{w1};
    \vertex{6}{0}{w2};

    \vertex{8}{0}{z0};
    \vertex{9}{0}{z1};
    \vertex{10}{0}{z2};
    
\path (v0) edge [bend left=45] (v2); 

\path (w0) edge [bend left=45] (w2); \path (w0) edge  (w1);

\path (z0) edge [bend left=45] (z2); 
\path (z1) edge  (z2);

    \node at (1,-1) {$S_1$};
    \node at (5,-1) {$S_2$};
    \node at (9,-1) {$S_3$};

    \end{tikzpicture}
    \end{center}
    \caption{The forbidden ordered induced subgraphs for an interval order ($\{S_1,S_2\}$) and a proper interval order ($\{S_1,S_2,S_3\}$).}\label{fig:patternsInt}
\end{figure}
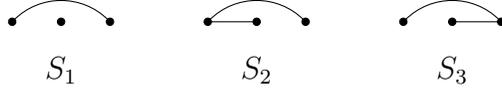

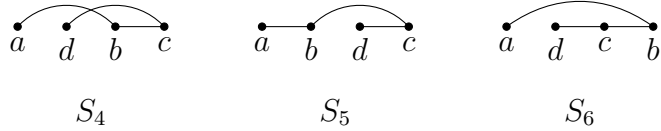
\begin{figure}
    \begin{center}
    \begin{tikzpicture}[scale=0.65]

    \vertex{0}{0}{v0};
    \vertex{1}{0}{v1};
    \vertex{2}{0}{v2};
    \vertex{3}{0}{v3};

    \vertex{5}{0}{z0};
    \vertex{6}{0}{z1};
    \vertex{7}{0}{z2};
    \vertex{8}{0}{z3};

    \vertex{10}{0}{w0};
    \vertex{11}{0}{w1};
    \vertex{12}{0}{w2};
    \vertex{13}{0}{w3};

    \vertexLabel[below]{v0}{$a$};
    \vertexLabel[below]{v1}{$d$};
    \vertexLabel[below]{v2}{$b$};
    \vertexLabel[below]{v3}{$c$};

    \vertexLabel[below]{w0}{$a$};
    \vertexLabel[below]{w1}{$d$};
    \vertexLabel[below]{w2}{$c$};
    \vertexLabel[below]{w3}{$b$};

    \vertexLabel[below]{z0}{$a$};
    \vertexLabel[below]{z1}{$b$};
    \vertexLabel[below]{z2}{$d$};
    \vertexLabel[below]{z3}{$c$};

\path (v0) edge [bend left=45] (v2); 
\path (v1) edge [bend left=45] (v3); 
\path (v2) edge  (v3); 

\path (w0) edge [bend left=35] (w3); 
\path (w2) edge  (w3); 
\path (w1) edge  (w2);

\path (z1) edge [bend left=45] (z3);
\path (z0) edge  (z1);
\path (z2) edge  (z3);

    \node at (1.5,-1.75) {$S_4$};
    \node at (6.5,-1.75) {$S_5$};
    \node at (11.5,-1.75) {$S_6$};

    \end{tikzpicture}
    \end{center}
    \caption{The forbidden ordered induced subgraphs for a perfect order.}\label{fig:patternsPerf}
\end{figure}

A vertex order $<$ of $G$ is \emph{perfect} if $G$ contains no $P_4$ $abcd$ with $a < b$ and $d < c$. A graph is \emph{perfectly orderable} if it admits a perfect order. Perfectly ordered graphs can be optimally colored by applying the greedy coloring algorithm in the perfect order~\cite{Ch-perf-ord}.  

The polynomiality of the $k$-coloring problem on precedence proper 2-thin graphs, given the order and partition, is then a consequence of the following result.

\begin{theorem}
Precedence proper 2-thin graphs are perfectly orderable. Moreover, given a partition of $V(G)$ into two sets $V^1$, $V^2$, strongly consistent with an order $<$ such that every vertex of $V^1$ is smaller than every vertex of $V^2$, a perfect order $\prec$ of $G$ can be obtained by taking the vertices on $V^1$ ordered according to the reverse of $<$, followed by the vertices of $V^2$ ordered according to $<$.
\end{theorem}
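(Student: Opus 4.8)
The plan is to show that the order $\prec$ defined in the statement contains none of the forbidden patterns $S_4$, $S_5$, $S_6$ of Figure~\ref{fig:patternsPerf}, equivalently that $G$ has no induced $P_4$ $abcd$ with $a \prec b$ and $d \prec c$. Suppose for contradiction that such a $P_4$ exists, with edges $ab$, $bc$, $cd$ and non-edges $ac$, $bd$, $ad$. I will case-split on how the four vertices distribute between $V^1$ and $V^2$, using two facts throughout: on $V^1$ the order $\prec$ is the reverse of $<$, on $V^2$ it is $<$ itself, and within each $V^i$ the restriction of $<$ is a proper interval order (so the patterns $S_1$, $S_2$, $S_3$ of Figure~\ref{fig:patternsInt} are forbidden there), while $<$ together with the partition is strongly consistent.

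First I would dispose of the two ``pure'' cases. If $a,b,c,d$ all lie in $V^2$, then $\prec$ restricted to them coincides with $<$, and a $P_4$ with $a < b$, $d < c$ is exactly the pattern $S_6$ (or, after noting the symmetry of $P_4$, one of $S_4$, $S_5$, $S_6$); but an induced $P_4$ with any such ordering of its vertices already contains $S_1$ or $S_2$ as an ordered induced subgraph on three of its vertices, contradicting that $<$ restricted to $V^2$ is an interval order. The case $a,b,c,d \in V^1$ is symmetric: $\prec$ there is $\overline{<}$, so in terms of $<$ we get a $P_4$ with $b < a$ and $c < d$, which is the reversal of the previous situation and again forces a copy of $S_1$ or $S_2$ in the $<$-order on $V^1$. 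So from now on the $P_4$ is genuinely split, with at least one vertex in each part, and every vertex of $V^1$ $\prec$-precedes every vertex of $V^2$.

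The heart of the argument is the mixed case, and this is where I expect the main obstacle. Because $a \prec b$ and $d \prec c$, and because all of $V^1$ precedes all of $V^2$ in $\prec$, the positions of $a,b$ and of $c,d$ relative to the cut are constrained: e.g.\ if $b \in V^1$ then $a \in V^1$ too, and if $d \in V^2$ then $c \in V^2$ too. Enumerating the surviving possibilities for $(\side(a),\side(b),\side(c),\side(d))$, each one places two adjacent or two non-adjacent $P_4$-vertices in the same part with a third $P_4$-vertex in the other part, and I will derive a contradiction from strong consistency via Lemma~\ref{prop:neigh:propthin}: for a vertex $v$ in one part, $N[v]$ intersected with the other part must be consecutive in that part's $<$-order, yet the $P_4$'s mixture of edges and non-edges (say $ab \in E$, $ad \notin E$, $bd \notin E$ with $a,b$ on one side and $d$ on the other, or the analogue) violates such consecutiveness once one pins down the $<$-order of the two same-side vertices using that it is a proper interval order and using $a \prec b$ resp.\ $d \prec c$ to translate $\prec$-order into $<$-order. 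Carrying this out cleanly for every residual case — keeping straight, for vertices in $V^1$, that $\prec$ reverses $<$ — is the bookkeeping-heavy step; but in each case the combination of ``proper interval order inside each part'' and ``strong consistency across the cut'' leaves no room for the pattern, completing the contradiction and hence the proof that $\prec$ is perfect.
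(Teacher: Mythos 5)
Your high-level strategy coincides with the paper's: show that $\prec$ contains no ordered $P_4$ pattern $abcd$ with $a\prec b$ and $d\prec c$ (the patterns $S_4$, $S_5$, $S_6$), using that $\prec$ restricted to each $V^i$ is a proper interval order (forbidding $S_1,S_2,S_3$ there) and that $<$ is strongly consistent with the partition. Your handling of the two pure cases is also essentially right, though note a small imprecision: for the pattern $S_6$ (order $a\prec b\prec d\prec c$), the $3$-vertex forbidden sub-pattern obtained is $S_3$, not $S_1$ or $S_2$; this is only a problem if one forgets that the proper interval order excludes $S_3$ as well, which you do acknowledge earlier.

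The genuine gap is the mixed case, which is the heart of the theorem and which you never actually carry out. You describe a plan --- enumerate the possible distributions of $a,b,c,d$ across $V^1$ and $V^2$, then appeal to Lemma~\ref{prop:neigh:propthin} or direct consistency --- but a plan is not a proof: the case analysis is exactly where errors hide, because for vertices in $V^1$ every $\prec$-inequality must be flipped before invoking consistency of $<$. Moreover, your proposed case organization (enumerating $(\side(a),\ldots,\side(d))$) is both heavier than necessary and underspecified: knowing the side assignment alone does not determine the relative $\prec$-order within a side, so you still face several sub-orderings per side assignment. The paper avoids this by first deriving a sharp structural consequence of the $S_1,S_2,S_3$ exclusion inside each part: in every occurrence of $S_4,S_5,S_6$, the $\prec$-first vertex must lie in $V^1$ and the $\prec$-last in $V^2$ (and for two of the three patterns even more is forced). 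This observation collapses the case analysis to one short contradiction per pattern, each obtained directly from the definition of strong consistency of $<$. Your proposal does not contain this key reduction and does not perform the residual case check, so as written it is an outline of the intended proof rather than a proof.
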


\begin{proof}
Suppose $(G,\prec)$ contains one of the ordered induced subgraphs $S_4$, $S_5$ or $S_6$ (Figure~\ref{fig:patternsPerf}). Since $\prec$ restricted to both $V^1$ and $V^2$ is a proper interval order, it does not contain any of the ordered induced subgraphs $S_1$, $S_2$ or $S_3$ (Figure~\ref{fig:patternsInt}). It follows that in any of the cases, the first vertex of the ordered subgraph of Figure~\ref{fig:patternsPerf} belongs to $V^1$ and the last vertex belongs to $V^2$. Moreover, in the case of $S_4$ and $S_5$ the first two vertices belong to $V^1$, and in the case of $S_4$ the last two vertices belong to $V^2$. 

Suppose first that $(G,\prec)$ contains $S_4$. Then $a, d \in V^1$ and $b,c \in V^2$, so $d < a < c$, $(d,c) \in E(G)$ and $(a,c) \not \in E(G)$, a contradiction because $<$ is strongly consistent with the partition $V^1$, $V^2$ in $G$. 

Suppose now that $(G,\prec)$ contains $S_5$. Then $a, b \in V^1$ and $c \in V^2$, so $b < a < c$, $(b,c) \in E(G)$ and $(a,c) \not \in E(G)$, a contradiction because $<$ is strongly consistent with the partition $V^1$, $V^2$ in $G$.

Finally, suppose that $(G,\prec)$ contains $S_6$. If $c \in V^1$, then $d \in V^1$, $c < d < b$, $(c,b) \in E(G)$ and $(d,b) \not \in E(G)$, a contradiction to the strong consistency of $<$ and $V^1$, $V^2$ in $G$. If $c \in V^2$, then $a < c < b$, $(a,b) \in E(G)$ and $(a,c) \not \in E(G)$, a contradiction to the strong consistency of $<$ and $V^1$, $V^2$ in $G$.

Therefore, $\prec$ is a perfect order for $G$.  
\end{proof}

The complexity of the $k$-coloring problem ($k$ part of the input) remains open for precedence proper $t$-thin graphs, $t \geq 3$.

\section{Conclusion} \label{sec:conclusion}

Interval graphs and proper interval graphs are well known graph classes, to which hundreds of papers have been dedicated. Due to its importance, and since not all graphs are (proper) interval graphs, several authors have defined larger classes of graphs by relaxing the definition of (proper) interval graphs or some characterization for this class. This paper aims to one of such generalizations, namely, the (proper) $k$-thin graphs and its variations.

We provide several properties of such generalizations, some of them restricted to the special classes of crown and grid graphs, in order to present the exact values for (proper) thinness, (proper) independent thinness, (proper) complete thinness, precedence (proper) thinness, precedence (proper) independent thinness, and precedence (proper) complete thinness for the crown graphs $CR_n$. The exact values are provided in Table~\ref{table1}.
For cographs, we prove that the precedence thinness can be determined in polynomial time. In particular, we compute the precedence thinness of the union and join of two graphs in terms of their own precedence thinness.  
We also provide bounds for the thinness of $n \times n$ grids $GR_n$ and $n \times m$ grids $GR_{n,m}$, proving that for $1 \leq n \leq m$, $\left \lceil \frac{n-1}{3} \right \rceil \leq
\thin(GR_{n,m}) \leq  \left \lceil \frac{n+1}{2} \right \rceil$. For precedence thinness, it is proved that $\fp(GR_{2,n}) = \left \lceil \frac{n+1}{2} \right \rceil$ and that $\left \lceil \frac{n-1}{3} \right \rceil \left \lceil\frac{n-1}{2}\right \rceil + 1 \leq \fp(GR_n) \leq \left \lceil\frac{n-1}{2}\right \rceil^2 +1$. 

Regarding applications, we show that the $k$-coloring problem ($k$ being part of the input) is NP-complete for both precedence $2$-thin graphs and proper $2$-thin graphs. On the positive side, it is polynomially solvable for precedence proper $2$-thin graphs, given the order and partition. This last result is obtained by showing that precedence proper $2$-thin graphs are perfectly orderable graphs, which are optimally colored by the greedy algorithm on any of its corresponding perfect orderings, and that from the order and partition in the precedence proper $2$-thin representation a perfect order can be computed.


\section*{Acknowledgments}

We thank the two anonymous reviewers whose comments and suggestions have helped us improve this manuscript.
The first author received support from CONICET (PIP 11220200100084CO), 
ANPCyT (PICT-2021-I-A-00755) and UBACyT (20020170100495BA and 20020220300079BA). The third, fourth, and  sixth authors were partially supported by CAPES, CNPq, and FAPERJ. 


\end{document}